\newcommand{\R}{\mathbb R}
\newcommand{\Z}{\mathbb Z}
\newcommand{\N}{\mathbb N}
\newcommand{\pr}{\mathbb P}
\newcommand{\E}{\mathbb E}
\newcommand{\diff}{\textrm{d}}
\newcommand{\ident}{{\mathchoice {\rm 1\mskip-4mu l} {\rm 1\mskip-4mu l}
{\rm 1\mskip-4.5mu l} {\rm 1\mskip-5mu l}}}
\newcounter{saveenum}
\newtheorem{teo}{Theorem}[section]
\newtheorem{lem}[teo]{Lemma}
\newtheorem{cor}[teo]{Corollary}
\newtheorem{rem}[teo]{Remark}
\newtheorem{pro}[teo]{Proposition}
\newtheorem{defn}[teo]{Definition}
\newtheorem{exmp}[teo]{Example}
\begin{document}

\title[Rumor processes]
{Rumor processes in random environment on $\mathbb{N}$ \\ and on Galton-Watson trees}

\author[D.~Bertacchi]{Daniela Bertacchi}
\address{D.~Bertacchi,  Universit\`a di Milano--Bicocca
Dipartimento di Matematica e Applicazioni,
Via Cozzi 53, 20125 Milano, Italy
}
\email{daniela.bertacchi\@@unimib.it}

% \author[F.~P.~Machado]{F\'abio Prates Machado}
% \address{F.~P.~Machado, Instituto de Matem\'atica e Estat\'\i stica,
% Universitade de S\~ao Paulo, Brasil.}
% \email{fmachado\@@ime.usp.br}
% 
\author[F.~Zucca]{Fabio Zucca}
\address{F.~Zucca, Dipartimento di Matematica,
Politecnico di Milano,
Piazza Leonardo da Vinci 32, 20133 Milano, Italy.}
\email{fabio.zucca\@@polimi.it}

\date{}

%\date{}
\begin{abstract}
The aim of this paper is to study rumor processes in random environment. 
In a rumor process a signal starts from the stations of a fixed vertex (the root) and 
travels on a graph from vertex to vertex. We consider two rumor processes.
In the firework process each station, when reached by the signal, transmits it up to a random
distance. In the reverse firework process, on the other hand, stations do not send any signal but they
``listen'' for it up to a random distance.
The first random environment that we
consider is the deterministic 1-dimensional tree $\N$ with a random number of stations on each vertex; in this case
the root is the origin of $\N$.
We give conditions for the survival/extinction on almost every realization of the sequence of
stations.
Later on, we study the processes on Galton-Watson trees with random number of stations on each vertex.
We show that if the probability of survival is positive, then there is survival on almost every 
realization of the infinite tree such that there is at least one station at the root.
We characterize the survival of the process in some cases and 
we give sufficient conditions for survival/extinction.
\end{abstract}

\maketitle

\noindent {\bf Keywords}: rumor process, random environment,  
labelled Galton-Watson tree, multitype Galton-Watson tree, inherited event.

\noindent {\bf AMS subject classification}: 60K35, 60G50.

\baselineskip .6 cm

% \textcolor{red}{new sentences}

% \textcolor{magenta}{sentences to be revised}

\section{Introduction}
\label{sec:intro}

Rumor processes are models for the propagation of signals or information on a net of stations. There is a large
literature on these processes, which takes into account deterministic and stochastic variants both in space with no
structure or on graphs, complex networks and grids (see e.g.~\cite{cf:LMM, cf:LR, cf:MT, cf:Sud} or 
\cite{cf:JMZ} and the references therein). More variants include competing rumors and rumors spreading
in a moving population (see \cite{cf:KS, cf:KOW, cf:KLLM}).
So far, up to our knowledge, 
no rumor models on random environment have been studied. In this paper we study two kinds of discrete-time rumor processes
(the \textit{firework} and  \textit{reverse firework} processes, introduced in \cite{cf:JMZ}) 
with a random number of stations at each vertex
of a graph $\mathbf{X}:=(X,E(X))$ which can be either  $\N$ or  a Galton-Watson tree (briefly, GW tree).
The use of a Galton-Watson tree (see for instance \cite{cf:PemStac1}) allows to study a random process
on a random and heterogeneous graph structure still retaining some probabilistic homogeneity.
The main question about this model is to understand under which conditions, the signal, starting
from one vertex of the graph, will spread indefinitely with positive probability or die out almost surely
in a finite number of steps.
It is worth noting that rumor processes can describe the behavior of other models such as the frog model (see
\cite{cf:BMZ, cf:LMZ10}): for instance the firework process  
 on $\N$ can describe the local survival of a frog model with immortal particles with left drift (where
the radius of a ``station'' or ``frog'' at $x$ is the distance between $x$ and the rightmost vertex ever reached by the frog).

Let $(\Omega,\mathcal{F}, \pr)$ be a probability space and consider
%On a graph $\mathbf{X}:=(X,E(X))$ (in this paper $\mathbf{X}$ is $\N$ or a GW-tree, a Galton-Watson tree) we have 
two families
of random variables $\{N_x\}_{x \in X}$ and $\{R_{x,i}\}_{i \in \N^*, x \in X}$ such that
$\{N_x, R_{x,i}\}_{i \in \N^*, x \in X}$ are independent and $\{R_{x,i}\}_{i \in \N^*}$ are identically distributed
for all $x \in X$, that is $R_{x,i}\sim R_x$ (where $\N^*:=\N \setminus\{0\}$). 
We choose an origin  $o \in X$ (which is
the origin of $\N$ or the root of the GW-tree) and we suppose that $\pr(N_o>0)>0$.
If $N_x \sim N$ and $R_{x} \sim R$ for all $x \in X$ (resp.~for all $x \in X \setminus\{o\}$) 
then the firework (resp.~reverse firework) process is called \textit{homogeneous}; 
\textit{heterogeneous} otherwise.
$N_x$ represents the random number of radio stations at $x$ and 
$\{R_{x,i}\}_{i=1}^{N_x}$ are the operating radii of the stations. In order to avoid trivial cases, we assume that $\pr(R_x<1)\in(0,1)$.
By $\mathcal{N}=\{N_x\}_{x \in X}$ we denote the random sequence of the numbers of stations.

We describe now the firework process starting from the root $o$ on a tree $(X,E(X))$, 
with deterministic  number of stations $\{n_x\}_{x \in X}$ and radii $\{r_{x,i}\}_{i \in \N^*, x \in X}$. 
If either the number of stations
or the radii are random (or the tree itself is a random graph), 
then this evolution applies to every fixed configuration $\omega \in \Omega$.
At time $0$ only the stations at $o$, if any, are active. At time $1$ all the stations at $x$ such that 
the distance between $o$ and $x$ is less than or equal to $\max_{i=1, \ldots, n_o} r_{o,i}$ are activated.
Given the set $A_n$ of active stations at time $n$ then 
$x \in A_{n+1}$ if and only if there exists $y \in  A_n$ such that 
\begin{enumerate}[(a)]
 \item 
$x$ belongs to the subtree branching
from $y$
\item
the distance between $y$ and $x$ is less than or equal to $\max_{i=1, \ldots, n_y} r_{y,i}$.
\end{enumerate}
Clearly if $n_o=0$ the process does not start at all.
We say that the process survives if and only if the stations on an infinite number of vertices are activated.
When the number of stations and the radii are random variables and the tree $\mathbf{X}$ is a random 
graph, we say that there is survival %with positive probability 
if the event
\[
 V=\{\omega \in \Omega\colon \textrm{ the firework process on } \mathbf{X}(\omega) \textrm{ with radii }
\{R_{x,i}(\omega)\} \textrm{ and } \{N_x(\omega)\}\textrm{ stations survives}\}
\]
has positive probability. 
We call this \textit{annealed survival} (see Section~\ref{subsec:annealed} 
for the definition of the annealed counterpart of the process).
We are mainly interested in the \textit{quenched survival}, that is, $\pr(V|(\mathbf{X},\mathcal{N})=(\mathbf{T},\mathbf{n}))>0$ for
almost every tree $\mathbf{T}$ and almost every sequence $\mathbf{n}$ of stations in a fixed set 
(see Section~\ref{sec:prelim} for details).

The reverse firework process starting from the root $o$ on a tree $(X,E(X))$, 
with $\{n_x\}_{x \in X}$ stations and radii $\{r_{x,i}\}_{i \in \N^*, x \in X}$ evolves in a slightly
different way. We consider $o$ as an active vertex at time $0$ disregarding its number of stations.
The number of stations at $o$ is not important because, while in the firework process the stations are actively
sending and passively listening, in the reverse firework process it is the other way around. More precisely, 
at time $1$ all the stations at $x$ such that 
the distance between $o$ and $x$ is less than or equal to $\max_{i=1, \ldots, n_x} r_{x,i}$ are activated.
Given the set $A_n$ of active stations at time $n$ then 
$x \in A_{n+1}$ if and only if there exists $y \in  A_n$ such that 
\begin{enumerate}[(a)]
 \item 
$x$ belongs to the subtree branching
from $y$
\item
the distance between $y$ and $x$ is less than or equal to $\max_{i=1, \ldots, n_y} r_{x,i}$.
\end{enumerate}
When the number of stations and the radii are random variables (or the tree is a random graph), 
the annealed and quenched survival for the 
reverse firework process are defined analogously to the firework case.

It has been shown in \cite{cf:JMZ} that, on $\N$ with a deterministic number of stations, even if the two processes
look similar, they behave differently. The same phenomenon can be observed in random environment.

There is a simple way to describe the survival of our processes by using an auxiliary graph.
Consider a tree $(X,E(X))$, a root $o$ and the sequence of \textit{annealed radii}  $\{\widetilde R_x\}_{x \in X}$
where $\widetilde R_x := \ident_{\{N_x \ge 1\}} \cdot \max\{R_{x,j}\colon j=1, \ldots, N_x\}$. 
We associate a new graph, that we call \textsl{F-graph}, with the firework process as follows: the set of vertices is
$X$ and we draw an edge $(w, w^\prime)$ if and only if $w^\prime$ belongs to the subtree
branching from $w$ and the distance between $w$ and $w^\prime$ is less than or equal to $\widetilde R_w$;
the F-graph is the connected component containing $o$.
A similar graph, the \textsl{RF-graph}, can be constructed for the reverse firework process: we draw and edge 
$(w, w^\prime)$ if and only if $w^\prime$ belongs to the subtree
branching from $w$ and the distance between $w$ and $w^\prime$ is less than or equal to $\widetilde R_{w^\prime}$;
as before, we consider just the connected component containing $o$. 
It is clear that there is survival for the firework (resp.~reverse firework)  process
if and only if the  F-graph (resp.~RF-graph)
is infinite.
In Section~\ref{sec:GWtree} the tree and the sequence $\{\widetilde R_w\}_{w \in \mathbf{T}}$ are both random, thus the 
F-graph and RF-graph are random 
as well.

Here is a brief outline of the paper. Section~\ref{sec:prelim} is devoted primarily to the construction
of the probability space, starting with the space of labelled GW-trees, 
where our processes live (Section~\ref{subsec:genericlabelledtrees}). The main result of this section, 
Lemma~\ref{lem:BRWihenrited}, is the key to obtain quenched results from the annealed ones for the processes
on GW-trees.
In Section~\ref{subsec:N} we discuss the important case where the GW-tree is the deterministic
1-dimensional tree $\N$. In Section~\ref{subsec:annealed} we introduce the notion of annealed counterpart
of the firework and reverse firework processes.
The results on survival and extinction for the firework process and the reverse firework process on $\N$ with random number of stations
can be found in Sections~\ref{sec:firework} and \ref{sec:reverse} respectively.
Theorem~\ref{thm:2} gives a characterization for the extinction of the homogeneous firework process. More explicit
conditions for survival/extinction can be found in Proposition~\ref{pro:homogeneous1} and Corollary~\ref{cor:homogeneous1}:
these are conditions on the tails of the distributions $R$ and $N$. In 
Remark~\ref{rem:finiteradiussurvival} we show that, given any law $R$ for the radii (resp.~$N$ for the number of stations)
there exist a law $N$ for the number of stations (resp.~$R$ for the radii) such that the firework process survives.
In particular, unlike the deterministic case, survival is possible even if the expected value of $R$ is finite.
The possible behaviors of the firework process when $\E[R]$ and $\E[N]$ are finite/infinite
are discussed in Remark~\ref{rem:expectedvalues}. Theorem~\ref{thm:3}
gives a sufficient condition for the survival of the heterogeneous 
firework process.
A characterization of survival for the homogeneous reverse 
firework process is given by Theorem~\ref{th:reversehomogeneous} while some sufficient conditions for survival/extinction
(on the tails of the distributions $R$ and $N$)
can be found in Corollary~\ref{cor:GW-Homogeneous-reverse-Firework2}. Remark~\ref{rem:expectedvalues-reverse}
is the reverse firework counterpart of Remark~\ref{rem:finiteradiussurvival}.
Theorem~\ref{th:reverseinhomogeneous}
gives a sufficient condition for the survival of the heterogeneous 
firework process.
Section~\ref{sec:GWtree} is devoted to homogeneous firework and reverse firework processes on GW-trees. 
Lemma~\ref{lem:BRWihenrited2} %Remark~\ref{rem:BRWihenrited} explains how to obtain 
derives quenched results from annealed ones by using Lemma~\ref{lem:BRWihenrited}.
Conditions for survival/extinction can be found in Theorem~\ref{th:GW-Homogeneous-Firework2}
and Corollary~\ref{cor:GW-Homogeneous-Firework} for the firework process and in
Theorem~\ref{th:GW-Homogeneous-reverse-Firework2}
and Corollary~\ref{cor:GW-Homogeneous-reverse-Firework2} for the reverse firework process.  In Example~\ref{exmp:bernoulli}
we apply all these results to the case where, independently, at each vertex of the GW-tree there is a station according
to a Bernoulli distribution. All the proofs, along with some technical lemmas, can be found in Section~\ref{sec:proofs}.

Finally it is worth noting that some results extend by coupling. 
Suppose that we have two families of random variables $\{N_x, R_{x,i}\}_{i \in \N^*, x \in X}$ and 
$\{N^\prime_x, R^\prime_{x,i}\}_{i \in \N^*, x \in X}$
such that $N_x \succeq N^\prime_x$ and $R_{x,i} \succeq R^\prime_{x,i}$ for all $i \in \N^*, x \in X$.
By coupling, if the (firework or reverse firework) process associated with $\{N_x, R_{x,i}\}_{i \in \N^*, x \in X}$ 
dies out almost surely
then so does the process associated with $\{N^\prime_x, R^\prime_{x,i}\}_{i \in \N^*, x \in X}$.

\section{Preliminaries and construction of the process}
\label{sec:prelim}

In this section we construct the space of the processes and we establish the notation that we use in the paper.
This section is organized as follows. We start in Section~\ref{subsec:genericlabelledtrees} 
by constructing a \textit{random labelled GW-tree} or \textit{multiytype GW-tree}
(which generalizes the well-known GW-tree) and the probability space of our processes. 
% The reader who is familiar with the construction of a classical
% random GW-tree can skip Section~\ref{subsec:genericlabelledtrees} and go to Remark~\ref{rem:indepMBP}
% %the next one
% (and to the Lemma thereafter)
% where we construct the labelled GW-tree that we need in Section~\ref{sec:GWtree}. 
Section~\ref{subsec:N} is devoted to the special case where the GW-tree is simply $\N$; this is all we need
in Sections~\ref{sec:firework} and \ref{sec:reverse}. In Section~\ref{subsec:annealed}
we introduce the \textit{annealed counterpart} or our processes which is used throughout
the whole paper.

\subsection{Random labelled Galton-Watson trees}
\label{subsec:genericlabelledtrees}

The idea is to construct a probability space for our processes on GW-trees with
random radii and random number of stations. 
To this aim we construct the space of labelled GW-trees which can be seen as the genealogy tree 
associated with a generic discrete-time 
multitype branching process (or MBP). Even though it can be constructed for a generic MBP
(or a generic branching random walk as described in  \cite{cf:BZ, cf:BZ2, cf:BZ3, cf:Z1}), for sake
of simplicity we write the explicit construction for the particular case that we need. Here
the \textit{labels} of the vertices are the number of stations.

The reader who is just interested in
firework and reversed firework processes on a random environment on $\N$ can skip this section and go 
to Section~\ref{subsec:N}.

Let us define the space of unlabelled GW-trees (the usual GW-trees).
Consider a GW-process, with offspring distribution $\rho$. We denote by
$\varphi(z):=\sum_{n \in \N} \rho(n) z^n$ the generating function of the GW-process and we write
$m:=\sum_{n \in \N} n \rho(n)$.
We consider the set of finite words $\mathcal{W}$ (including the empty word, $\emptyset$, which is the root) on the infinite
alphabet $\mathbb{N}^*:=\mathbb{N}\setminus\{0\}$. Given two words $w,w^\prime \in \mathcal{W}$ we denote by $ww^\prime$
the extension of $w$ obtained by attaching $w^\prime$ to the right.
A tree $\mathbf{T}$ on $\mathcal{W}$ is a tree with vertices in $\mathcal{W}$ such that every child of a vertex $w \in \mathcal{W}$ is
in the set $\{wi\colon  i \in \N\}$. We denote by $\mathbb{T}$ the set of these trees. The length of a word $w \in \mathcal{W}$
will be denoted by $|w|$.
%All the trees that we consider here, are trees on $\mathcal{W}$.
Let $\{T_w\}_{w \in \mathcal{W}}$  
be a sequence of independent random variables with law $\rho$.
% such that
% $T_w \sim T$ (for every $w \in \mathcal{W}$).
For every realization of $\{T_w\}_{w \in \mathcal{W}}$ we draw an edge from a word $w$ to the 
word $wi$ (where $i \in \mathbb{N}^*$) if and only if $i \le T_w$. This is a forest;
we denote by $\tau$ %$\mathbf{T}$ 
the random GW-tree, that is the connected component of the forest containing the
root $\emptyset$. 
%The above construction induces a probability
%measure $\overline{\mu}$ on 
Consider the space $\mathbb{T}$ endowed 
with the minimal $\sigma$-algebra containing all sets $A_{\mathbf{T}}:=
\{\mathbf{T}^\prime \in \mathcal{W}\colon \mathbf{T}^\prime \supseteq \mathbf{T} \}$
where  $\mathbf{T}$ is a finite tree.
The GW-tree $\tau$ is a $\mathbb{T}$-valued random variable; we denote by  $\overline{\mu}$
its law and we call it \textit{GW-measure}.
%It is well-known that t
The probability that $\tau$ is a finite tree is the smallest
fixed point $\alpha$ of $\varphi$ in $[0,1]$. Moreover, 
if $\rho(1)=1$ then $\tau^L$ is the 1-dimensional tree $\N$ and $\alpha=0$. If $\rho(1)<1$ then
$\alpha=1$ (that is, $\tau^L$ is a finite tree a.s.) if and only if $m \le 1$.

The set of vertices of a generic labelled tree is
$\mathcal{V}:=\mathcal{W} \times J$, where $J$ is the at most countable set of labels.
We consider only those trees $\Upsilon$ on $\mathcal{V}$ such that
(1) every child of $(w,j) \in \Upsilon$ is in the set $\{(wi,j^\prime) \colon i \in \N^*, j^\prime \in J\}$,
(2) if $(w,j), (w, j^\prime) \in \Upsilon$ then $j=j^\prime$,
(3) the root of $\Upsilon$ belongs
to $\{(\emptyset, j)\colon j \in J\}$.
Denote by $\mathbb{LT}$ the set of all these trees on $\mathcal{V}$.
%The label $l^\prime(w,j)$ of a vertex $(w,j)$ is $j$. Moreover,
The natural projection $\pi$ of $\mathcal{V}$ onto $\mathcal{W}$ extends to a projection
from $\mathbb{LT}$ onto $\mathbb{T}$.
%a tree $\Upsilon \in \mathbb{LT}$ onto $\mathcal{W}$ is a tree on $\mathcal{W}$. 

Given a probability space where we have a realization of the GW-tree $\tau$ and, independently,
a family of independent $J$-valued random variables $\{N_w\}_{w \in \mathcal{W}}$, we define 
an $\mathbb{LT}$-valued random variable 
$\tau^L(\omega):=\{(w,j) \in \mathcal{V}\colon w \in \tau(\omega), j=N_w(\omega)\}$
% $\tau^L(\omega):=(\tau(\omega), N_{\tau(\omega)}(\omega))$ 
% This is clearly a $\mathbb{LT}$-valued random variable 
that we call 
\textit{labelled GW-tree}. Its law,
which is uniquely determined by $\rho$ and by the laws of $\{N_w\}_{w \in \mathcal{W}}$, will be denoted by 
$\mu_{\varsigma}$ (where $\varsigma=\pr_{N_\emptyset}$ is the law of $N_\emptyset$) or simply by $\mu$.
%(see \cite[Section 5.C]{cf:Woess09} for the construction of a MBP as a markov proces indexed by a GW-tree).
Roughly speaking, in this special case, the random labelled GW-tree can be obtained by generating a GW-tree in the first place
and then by placing a random type (in our case, a random number of stations) on each vertex $w$ independently
with law $N_w$. The projected random variable $\pi \circ\tau^L$ is the unlabelled GW-tree $\tau$ and
$\overline{\mu}(\cdot)=\mu(\pi^{-1}(\cdot))$. Hence, given a measurable set $A \subset \mathbb{T}$, 
the measure of the set $\pi^{-1}(A) \subset \mathbb{LT}$ does not depend on  $\{N_w\}_{w \in \mathcal{W}}$.
In particular if 
$E$ is the set of finite labelled trees
then $\mu_\varsigma(E)=\alpha$ for every law $\varsigma$, since it
 is independent of $\{N_w\}_{w \in \mathcal{W}}$.

On the probability space $(\mathbb{LT}, \mu_{\varsigma})$ there is a canonical process: given $\Upsilon \in \mathbb{LT}$,
define $Z_n(\Upsilon, j)$ as the total number of vertices $(w,j) \in \Upsilon$
for all words $w \in \mathcal{W}$ of length $n \ge 0$. If $\{N_w\}_{w \in \mathcal{W}\setminus\{\emptyset\}}$
are i.i.d.~then $\{Z_n\}_{n \in \N}$ is a discrete-time MBP starting from one particle of random type
$N_\emptyset$ and $\tau^L$ is its genealogy tree.

We denote by $l(\Upsilon)$ the label of the root of $\Upsilon$;
$l$ is a random variable on $\mathbb{LT}$ with law $\varsigma$. By construction
$l$ and $\tau=\pi \circ \tau^L$ are independent. 
The measure $\mu_\varsigma$ depends on the initial distribution ${\varsigma}$;
a particular case is
${\varsigma}:=\delta_j$ where we denote the measure $\mu_{\delta_j}$ simply by $\mu_j$. 
Clearly $\mu_{\varsigma}=\sum_{j \in J}{\varsigma}(j)\mu_j$ and, for every measurable set $A \subseteq {\mathbb{LT}}$, 
$\E[A | l=j]=\mu_j(A)= \E[\tau^L \in A | Z_0=\ident_j]
%\equiv \mu_j(\Upsilon \in A\colon l(\Upsilon)=j)
$. 
%Define $q(A,j):=%\E[A | l=j]
%\mu_j(A)\equiv \E[\tau^L \in A | Z_0=\ident_j]$.
The supports ${\mathbb{LT}}_j:=\{\Upsilon \in {\mathbb{LT}}\colon  l(\Upsilon)=j\}$ of the measures $\mu_j$ induces a
natural partition ${\mathbb{LT}}=\bigcup_{j \in J} {\mathbb{LT}}_j$. 

We note that given a labelled tree $\Upsilon$ and a vertex
$(w,j) \in \Upsilon$ there is a natural identification of the subtree branching from $(w,j)$ and a labelled tree in
${\mathbb{LT}}_{j}$. Analogously, every branching random subtree of a random labelled GW-tree $\tau^L$ 
can be identified with a random labelled GW-tree.
%
% Given $(w,j)$ take the branching set of vertices $(ww^\prime, j^\prime)$ and 
% use the left shift $(ww^\prime, j^\prime) \mapsto (w^\prime, j^\prime)$
%

\begin{defn}\label{def:inherited}
 Given $\overline J \subseteq J$, 
a couple $(A,\widetilde A)$ of measurable sets of trees $A, \widetilde A \subseteq {\mathbb{LT}}$ is called \textit{inherited
with respect to $\overline J$} if and only if
\begin{enumerate}
 \item $\mu_j(E \setminus A)=\mu_j(A \triangle \widetilde A)=0$ for all $j \in \overline J$;
 \item if $\Upsilon \in A$ then all subtrees branching from the children of the root %$\Upsilon^{(1)}, \ldots, \Upsilon^{(k)} \in 
belong to $\widetilde A$.
\end{enumerate}
\end{defn}

For instance, the first condition is satisfied if $E \subseteq A = \widetilde A$; in this case,
being inherited is just a set property which does not depend on ${j \in J}$.
When $A=\widetilde A$ 
and $\overline J=J$ we simply say that $A$ is inherited
(when $J$ is a singleton this is the usual definition, see e.g.~\cite{cf:Per}).
The following lemma holds.

\begin{lem}\label{lem:BRWihenrited}
%Suppose that the expected value $m$ of $\rho$ satisfies $m>1$. 
%Let $\mathbf{G}(z|j)=\varphi \big (\sum_{y \in J} z(y) \pr(N=y) \big )$ and 
Let $\{N_w\}_{w \in \mathcal{W}}$ be an independent family of random variables such that 
$N_{w} \sim N$ for all $w \in \mathcal{W}\setminus \{\emptyset\}$ 
%Consider an independent MBP 
and define $J_N:=\{j \in J\colon  \pr(N=j)>0\}$. Suppose that $J_N \subseteq \overline J \subseteq J$.
If $(A,\widetilde A)$ is inherited w.r.~to $\overline J$
then either $\mu_j(A \triangle E)=\mu_j(\widetilde A \triangle E)=0$ for all $j \in \overline J$ or 
$\mu_j(A)=\mu_j(\widetilde A)=1$ for all $j \in J_N$.
Moreover, %given any initial distribution ${\varsigma}$ such that 
in the first case 
if $\mathrm{supp}({\varsigma}) \subseteq \overline J$, 
we have $\mu_{\varsigma}(A \triangle E)=\mu_{\varsigma}(\widetilde A \triangle E)=0$,
while in the second case if $\mathrm{supp}({\varsigma}) \subseteq J_N$, 
we have $\mu_{\varsigma}(A)=\mu_{\varsigma}(\widetilde A)=1$.
\end{lem}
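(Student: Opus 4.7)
The plan is to mimic the classical $0$--$1$ law for inherited events on Galton--Watson trees, while tracking the dependence of the measures $\mu_j$ on the root label $j$. Throughout I write $\alpha$ for the common value $\mu_j(E)$, which is independent of $j$ since $\pi \circ \tau^L = \tau$ has law $\overline{\mu}$ regardless of the labels.

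First I would fix $j \in \overline{J}$ and exploit inheritance condition (2): on the event $A$, all subtrees branching from the children of the root lie in $\widetilde{A}$. Conditional on the root label being $j$, the number of children has distribution $\rho$, each child independently receives a label with law $N$ (hence supported in $J_N \subseteq \overline{J}$), and the subtrees are independent labelled GW-trees. Summing over the number of children gives
\[
\mu_j(A) \le \sum_{k=0}^{\infty} \rho(k) \Big(\sum_{j' \in J_N} \pr(N=j')\, \mu_{j'}(\widetilde{A})\Big)^{\!k} = \varphi(p),
\quad p := \sum_{j' \in J_N} \pr(N=j')\, \mu_{j'}(\widetilde{A}).
\]
By condition (1), $\mu_{j'}(A) = \mu_{j'}(\widetilde{A})$ for every $j' \in \overline{J}$, so $p$ can equivalently be written in terms of $\mu_{j'}(A)$. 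Averaging the bound over $j \in J_N$ weighted by $\pr(N=j)$ gives $p \le \varphi(p)$. Condition (1) also yields $\mu_j(A) \ge \mu_j(E) = \alpha$ for every $j \in \overline{J}$, so $p \ge \alpha$. Since $\varphi$ is convex on $[0,1]$ with exactly the two fixed points $\alpha$ and $1$ (and $\varphi(x) \le x$ on $[\alpha,1]$), the constraints $p \ge \alpha$ and $p \le \varphi(p)$ force $p \in \{\alpha, 1\}$.

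In the case $p = \alpha$, for every $j \in \overline{J}$ one has $\alpha \le \mu_j(A) \le \varphi(\alpha) = \alpha$, hence $\mu_j(A) = \mu_j(E) = \alpha$; combined with $\mu_j(E \setminus A) = 0$ this gives $\mu_j(A \triangle E) = 0$, and then $\mu_j(\widetilde{A} \triangle E) = 0$ follows from $\mu_j(A \triangle \widetilde{A}) = 0$. In the case $p = 1$, since each term in the definition of $p$ is at most $1$ and the $\pr(N=\cdot)$-weighted sum equals $1$, one deduces $\mu_j(\widetilde{A}) = 1$ for every $j \in J_N$; condition (1) then promotes this to $\mu_j(A) = 1$ on $J_N$. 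The two assertions on $\mu_\varsigma$ follow by integrating against $\varsigma$ using $\mu_\varsigma = \sum_j \varsigma(j)\mu_j$ together with the hypothesis on $\mathrm{supp}(\varsigma)$.

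The main difficulty, which is bookkeeping rather than conceptual, is the asymmetry of the conclusion: the lower bound $\mu_j(A) \ge \alpha$ and the identification $A \triangle E$ live on $\overline{J}$, while the conclusion $\mu_j(A) = 1$ only extends to $J_N$, because labels outside $J_N$ never appear at non-root vertices. One must therefore be careful to use the inequality $\mu_j(A) \le \varphi(p)$ in the right direction and on the right index set at each step so as not to over-claim.
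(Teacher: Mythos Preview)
Your argument is correct and follows essentially the same route as the paper: condition on the first generation to obtain $\mu_j(A)\le\varphi(p)$ for all $j\in\overline J$, average over $J_N$ to get $p\le\varphi(p)$, combine with $p\ge\alpha$ to force $p\in\{\alpha,1\}$, and then split into the two cases. The only cosmetic difference is that the paper separates out the trivial case $m\le 1$ (where $\alpha=1$), whereas your phrasing ``exactly the two fixed points $\alpha$ and $1$'' tacitly assumes $m>1$; this does no harm since $p\ge\alpha=1$ already settles that case, but you may want to say so explicitly.
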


Note that in general, given an inherited set $A$, it is not true that if for some $j \in J$, $\mu_j(A)>\mu_j(E)$ then
$\mu_j(A)=1$ for all $j \in J$. Indeed, suppose that $J=\{1,2\}$ and $N=2$ almost surely.
% consider a  MBP where 
% % a particle of type $1$ has a child of type $2$ almost surely, while particles of type 2 
% particles have children of type 2 according to a supercritical
% branching process. In that case 
If $m>1$ then the smallest fixed point $\alpha$ of $\varphi$ 
%(that is, the probability that the random labelled tree is finite) 
is strictly smaller than $1$. Let $A$ be the collection of all trees which are either finite or with
root of type 2. Then $A$ is inherited and $\mu_1(A)=\alpha$, $\mu_2(A)=1$.
When $J$ is a singleton, Lemma~\ref{lem:BRWihenrited} applies to classical (unlabelled) GW-trees.

% a vertex $w \in \mathcal{W}$ is given by $N_w$ and represents the number of stations at $w$.
% The specific MBP on $\N^*$ which generates our labelled GW-trees is as follows: the initial state is $N_\emptyset$
% (the random number of stations on the root, that is, the label of the root), the distribution of 
% the total random number of children of a particle is $\rho$ (it does not depend on the label) and each child
% is place independently on $\N^*$ (that is, is given a number of station) according to the distribution of the variable $N$.
From now on, $J=\mathbb{N}$ and the \textit{environment} %of our processes 
is a realization of the labelled GW-tree $\tau^L$, that is, 
a choice of the random GW-tree $\tau$ along with the number of stations at each vertex.
% : in short, it is a choice
% of a random labelled GW-tree $\tau^L$ such that $\pr(\tau^L \in A)=\mu_{{\varsigma}}(A)$ for any measurable set $A \subseteq \mathbb{LT}$
% (where $\mu_{{\varsigma}}$ is the probability measure induced on $\mathbb{LT}$ by the above MBP 
% starting from one particle of type $N_\emptyset$).

After the construction of the environment, we construct the probability space for our processes.  
Let $(\mathbb{LT},\mu)$ be as before and $\nu= \prod_{w \in \mathcal{W}, i \in \N^*} \pr_{R_{w,i}}$ be the product measure of the
laws of $\{R_{w,i}\}_{w \in \mathcal{W}, i \in \N^*}$ on the canonical product space $\mathcal{O}:=[0,+\infty)^{\mathcal{W} \times \N^*}$.
Henceforth we do not need the original probability space $(\Omega, \mathcal{F}, \pr)$ and we consider
$\Omega:= \mathbb{LT} \times \mathcal{O}$ endowed with the usual product $\sigma$-algebra and 
$\pr:= \mu \times \nu$. With a slight abuse of notation we denote again
by $\tau^L$, $N_w$ and $R_w$ the natural counterparts of the original random variables which are now defined on the ``new''
space $\Omega$. For every $\omega \in \Omega$ the processes evolve according to the sequence of radii 
$\{\max_{i \in \N^*} R_{w,i}(\omega) \ident_{[1,N_w(\omega)]}(i)\}_{x\in X}$
(see also Section~\ref{subsec:annealed}). 
Extinction and survival are %clearly
measurable sets with respect to the product $\sigma$-algebra. By standard measure theory, for every event $A$ we have
\begin{equation}\label{eq:disintegration}
 \pr(A)=\int_{\mathbb{LT}} \pr(A|\tau^L=\Upsilon)\mu(\diff \Upsilon)
\end{equation}
where
$\pr(A | \tau^L=\Upsilon)=\nu( \mathbf{r} \in \mathcal{O} \colon (\Upsilon,\mathbf{r}) \in A)$
since $\tau^L(\Upsilon, \mathbf{r})\equiv\Upsilon$. 
Using equation~\eqref{eq:disintegration}, we have that
$\pr(A)=0$ (resp.~$\pr(A)=1$) if and only if $\pr(A|\tau^L=\Upsilon)=0$ (resp.~$\pr(A|\tau^L=\Upsilon)=1$) 
$\mu$-almost surely. 
% This will be useful
% to derive some quenched results from annealed ones. 
It is clear that $\pr(A)>0$ if and only
if $\mu(\Upsilon\colon \pr(A|\tau^L=\Upsilon)>0)>0$. 
% According to the Remark~\ref{rem:BRWihenrited}
% if $A$ is the event ``the process dies out'' then $\mu(\Upsilon\colon \pr(A|\tau^L=\Upsilon)>0|l(\tau)})$
% is either $\mu(E)$ or $1$.
Quenched results, when $A$ is the event ``the process survives'', can be obtained from Lemma~\ref{lem:BRWihenrited} 
as shown by Lemma~\ref{lem:BRWihenrited2}. % in Remark~\ref{rem:BRWihenrited}.

\subsection{Random environment on $\N$}
\label{subsec:N}

We discuss here the case where $\rho(1)=1$. The projection of the resulting
labelled GW-tree is %$\N$ almost surely.
the set of words (empty word included)
where $1$ is the only letter. We identify this projected tree with %the 1-dimensional tree 
$\N$.
To stress the fact that we are dealing with this special case, 
we adopt a different notation. The tree $\tau^L$ can be identified with $\N^\N$-valued random variable
$\mathcal{N}$ (representing the sequence of
labels of $\tau^L$) where $\mathcal{N}(\omega):=\{N_i(\omega)\}_{i \in \N}$.
Remember that $\{N_i\}_{i \in \N}$ is a family of independent random variables (possibly with different distributions).
% In this special case we drop the assumption $N_w \sim N$ and we just assume that 
% $\{N_i\}_{i \in \N}$ is a family of independent random variables (possibly with different distributions).

In this case the environment is represented by every fixed realization of $\mathcal{N}$.
Instead of $(\mathbb{LT}, \mu)$, here we use the space $(\mathbb{N}^{\mathbb{N}}, \mu)$ 
% where
% $\mathcal{C}=\mathbb{N}^{\mathbb{N}}$ %($\mathbb{N}_*$ being $\mathbb{N}\setminus \{0\}$)
and $\mu$ is the law of $\mathcal{N}$, that is, 
the product measure $\prod_{i \in \N}\pr_{N_i}$
of the laws of $\{N_i\}_{i \in \N}$ on  $\mathbb{N}^{\mathbb{N}}$. 
The measure $\pr=\mu \times \nu$ is defined on 
$\Omega= \mathbb{N}^{\mathbb{N}}\times \mathcal{O}$.
Equation~\eqref{eq:disintegration} becomes
$%\begin{equation}\label{eq:disintegration1}
 \pr(A)=\int_{\mathbb{N}^{\mathbb{N}}} \pr(A|\mathcal{N}=\mathbf{n})\mu(\diff \mathbf{n}).
$ %\end{equation}
With a slight abuse of notation we denote by $\{N_x\}_{x \in X}$
the realization of the sequence $\mathcal{N}$ on $\mathbb{N}^{\mathbb{N}}\times \mathcal{O}$.

% As in the general case, by using equation~\eqref{eq:disintegration}, we have that
% $\pr(A)=0$ (resp.~$\pr(A)=1$) if and only if $\pr(A|\mathcal{N}=\mathbf{n})=0$ (resp.~$\pr(A|\mathcal{N}=\mathbf{n})=1$) 
% $\mu$-almost surely. This will be useful
% to derive some quenched results from annealed ones. It is clear that $\pr(A)>0$ if and only
% if $\mu(\mathbf{n}\colon \pr(A|\mathcal{N}=\mathbf{n})>0)>0$. 
% If $A$ is the event ``the process survives'' then $\mu(\mathbf{n}\colon \pr(A|\mathcal{N}=\mathbf{n})>0)$
% is either $\pr(N_0>0)$ or $1$.

\subsection{The annealed counterpart}
\label{subsec:annealed}

% 
% per l'annealed counterpart: fare notare che se si costruisce l'environment con unlabelled 
% tree (1 stazione per sito) e famiglia di raggi \widetilde R = max_{1...N} R_i si ha stessa 
% sopravvivenza/estinzione per ogni \omega \in \Omega.
On a deterministic graph
or on any given realization of a random graph, we associate with
our (firework or reverse firework) process
with random numbers of stations, a (firework or reverse firework)  process with one station per vertex.
% There is a connection between our process (firework or reverse firework)
% with random numbers of stations and a process with one station per vertex which
% we use extensively in this paper. This connection holds on any deterministic graph
% or on any given realization of a random graph.
Indeed, consider the process with one station on each vertex $x$
and radius $\widetilde R_x= \ident_{\{N_x \ge 1\}} \cdot \max\{R_{x,j}\colon j=1, \ldots, N_x\}$ at $x \in X$. 
% This process and the original
% one have the same behavior. %on each realization of the environment. 
We call this process, the \textit{deterministic counterpart} or \textit{annealed counterpart}
of the original process.
The annealed counterpart does not retain any information about the environment, nevertheless
the probability of annealed survival of the processes are the same.
% Conditioned on $\{\mathcal{N}=\mathbf{n}\}$ the process behaves as a process on deterministic environment ($n_x$ stations at $x$) which,
% in turn, behaves as the process with one station per site with radius $\widetilde R_x= 
% \ident_{\{n_x \ge 1\}} \cdot \max\{R_{x,j}\colon j=1, \ldots, n_x\}$ at 
% $x \in X$.
In the following we use extensively
the cumulative distribution function of $\widetilde R_x$ which can be easily 
computed as $\ident_{[0,+\infty)}(t) G_{N_x}(\pr(R_x \le t)))$ where
$G_{N_x}(t) := \E[t^{N_x}] = \sum_{j=0}^\infty \pr(N_x=j)t^j$. As a consequence 
we have  
$\pr(\widetilde R_x<t) = 
\ident_{(0,+\infty)}(t) G_{N_x}(\pr(R_x < t))
$.
Since we assumed that $\pr(R_x<1)\in(0,1)$ then $\pr(\widetilde R_x < 1)\in(0,1)$.

\section{Firework process on $\N$}\label{sec:firework}

According to Section~\ref{subsec:N}, the environment is the random sequence $\mathcal{N}$ (where $J=\N$),
its law is $\mu$ and $\pr$ is the probability measure on $\Omega=\mathbb{N}^\mathbb{N}\times \mathcal{O}$.
We denote by $V$ the event ``the firework process survives'', that is, ``all the vertices are reached by a signal''. 
$\pr(V)$ is also the probability of survival of the annealed counterpart of the reverse firework
process; on the other hand $\pr(V|\mathcal{N}=\mathbf{n})$ is the probability of survival of the firework process
conditioned on a specific realization $\mathbf{n}$ of the sequence of  numbers of stations.
Results for the deterministic case with $k$ stations per site can be retrieved by using $G_N(t) \equiv t^k$.

\begin{teo}\label{thm:2}
 In the homogeneous case ($R_{i,j} \sim R$ and $N_i \sim N$ for all $i \in \N$, $j \in \N^*$), 
\begin{enumerate}
\item
if 
\begin{equation}\label{eq:sumprod2.5}
 \sum_{n=0}^\infty \prod_{i=0}^n G_{N}(\pr(R < i+1))=+\infty
\end{equation}
then there is extinction for $\mu$-almost all $\mathcal{N}$; 
\item if
\begin{equation}\label{eq:sumprod2}
 \sum_{n=0}^\infty \prod_{i=0}^n G_{N}(\pr(R < i+1))<+\infty
\end{equation}
then $\pr(V)>0$ and
$\mu(\mathbf{n}\colon \pr(V|\mathcal{N}=\mathbf{n})>0)=\pr(N>0)$.
\end{enumerate}
\end{teo}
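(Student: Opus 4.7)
The strategy is to reduce both parts to the survival criterion for the annealed counterpart on $\N$ (a homogeneous firework process with a single station of i.i.d.\ radius $\widetilde R$ per vertex) and then, in Part~(2), upgrade the annealed positive-survival conclusion to a quenched statement using the shift-ergodicity of the product environment.

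In the homogeneous case the effective radii $\widetilde R_i$ are i.i.d.\ with common CDF $t\mapsto G_N(\pr(R<t))$, and $\pr(V)$ coincides with the survival probability of the annealed counterpart. Hence the known criterion from \cite{cf:JMZ} for the one-station homogeneous firework process on $\N$, applied to the radii $\widetilde R$, yields
\[
\pr(V) > 0 \;\iff\; \sum_{n=0}^\infty \prod_{i=0}^n G_N(\pr(R<i+1)) < \infty.
\]
Setting $f(\mathbf{n}) := \pr(V\mid \mathcal{N}=\mathbf{n})$ and using $\pr(V)=\int f\,d\mu$, Part~(1) is immediate: divergence of the series forces $\pr(V)=0$, hence $f\equiv 0$ $\mu$-almost surely, which is precisely quenched extinction.

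For Part~(2), assume the series converges, so that $\pr(V)>0$ and $\mu(\{f>0\})>0$. A preliminary observation is that summability forces $\pr(R\geq k)>0$ for every $k\geq1$: otherwise $G_N(\pr(R<i+1))=1$ for all sufficiently large $i$ and the tail of the series reduces to $\sum c$ with $c>0$, contradicting convergence. Since $\mu$ is a Bernoulli product on $\N^\N$, the shift $\sigma$ is ergodic and Birkhoff's theorem yields that for $\mu$-a.e.\ $\mathbf{n}$ there are infinitely many $k$ with $f(\sigma^k\mathbf{n})>0$. The argument is then closed by a one-step jumping inequality: for $n_0>0$ and any $k\geq1$, the conditional independence (given $\mathcal{N}$) of $\widetilde R_0$ and $\{\widetilde R_i\}_{i\geq k}$ gives
\[
f(\mathbf{n}) \;\geq\; \pr(\widetilde R_0\geq k\mid N_0=n_0)\,f(\sigma^k \mathbf{n}) \;=\; \bigl(1-\pr(R<k)^{n_0}\bigr)\,f(\sigma^k \mathbf{n}),
\]
because on $\{\widetilde R_0\geq k\}$ vertex $k$ is activated directly from $0$ and the subsequent F-graph rooted at $k$ then survives with probability $f(\sigma^k\mathbf{n})$. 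Under the hypothesis of Part~(2) the prefactor is strictly positive for every such $k$, so combining with the ergodic step we obtain $f(\mathbf{n})>0$ for $\mu$-a.e.\ $\mathbf{n}$ with $n_0>0$. Since $f$ vanishes on $\{n_0=0\}$, this gives $\mu(\{f>0\})=\pr(N>0)$.

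The main obstacle is making the jumping step effective: a single long jump from the root is only possible when $R$ has unbounded support, so the argument hinges on the preliminary observation that summability already forces $\pr(R\geq k)>0$ for every $k\geq1$. Without this, one would have to reach the ``good'' vertices identified by the ergodic theorem via a more delicate multi-step path argument through intermediate stations.
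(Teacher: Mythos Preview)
Your proof is correct. Both you and the paper reduce to the annealed counterpart and invoke the criterion from \cite{cf:JMZ} for Part~(1) and for the positivity of $\pr(V)$ in Part~(2); both also observe (and need) that convergence of the series forces $R$ to be unbounded. The difference lies in how the quenched conclusion in Part~(2) is reached. The paper argues that the set $W_0=\{\mathbf{n}:f(\mathbf{n})>0,\ n_0>0\}$ coincides with the tail event $\limsup_k W_k$ (where $W_k$ is the analogous set for the process started at vertex $k$) and then applies Kolmogorov's 0--1 law to the i.i.d.\ sequence $\{N_i\}$. You instead use the ergodicity of the shift on the product measure $\mu$ together with Birkhoff's theorem to locate, for $\mu$-a.e.\ $\mathbf{n}$, some $k$ with $f(\sigma^k\mathbf{n})>0$, and then your one-jump inequality transports positivity back to the root. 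Both routes rest on the same mechanism---a single long jump from the origin, available because $R$ is unbounded---but package it differently: the paper's tail-event argument is marginally more elementary (no ergodic theorem needed), while your argument is more explicit about why a single good $k$ suffices and sidesteps the paper's claim that $W_k\supseteq W_{k+1}$ for all $k$, which is delicate when $n_k=0$ (only $W_0=\limsup_k W_k$ is actually required there).
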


% NOTE THAT if eq.~\eqref{eq:sumprod2} hold there is survival with positive probability on any infinite
% ray of a GW-tree

% 
% 
% According to Remark~\ref{rem:tail}, in the homogeneous case if we find conditions for
% \[
%  \pr \Big ( \sum_{n=0}^\infty \prod_{i=0}^n \pr(R_i < n-i+1)^{N_i} =+\infty
% \Big )\equiv \mu \Big ( \{n_i\}_{i=0}^\infty \in {\mathbb{N}^{\mathbb{N}}} \colon \sum_{n=0}^\infty 
% \prod_{i=0}^n \pr(R_i < n-i+1)^{n_i} =+\infty
% \Big )>0
% \]
% \textcolor{red}{(such as, for instance $\sum_{n=0}^\infty \prod_{i=0}^n G_{N_i}(\pr(R_i < n-i+1))=+\infty$)}
% then we have immediately $\pr \left ( \sum_{n=0}^\infty \prod_{i=0}^n \pr(R_i < n-i+1)^{N_i} =+\infty
% \right )=1$.

Since there is no survival if there are no stations at the origin, then the probability 
that the environment $\mathcal{N}$ can sustain a surviving firework process cannot exceed $\pr(N>0)$.
The previous theorem tells us that when the above probability is not $0$ then it attains its maximum 
value.

A function $L:(0, +\infty) \to \R$ is \textit{slowly varying} if and only if for all $x>0$
($\Longleftrightarrow$ for all $x >1$) $\lim_{t \to +\infty} L(xt)/L(t)=1$. It is clear that if
a slowly varying function $L$ does not vanish on $(0, +\infty)$ then $1/L$ is a slowly varying function.
%Moreover, if $L$ is a nonincreasing, positive, slowly varying function then $\sum_n L(n)=+\infty$ and, if it is integrable, $\int_0^{+\infty} L(t) \diff t =+\infty$.
%
%
% Let $x>1$, $\varepsilon>0$ and $t_0$ such that $x(1-\varepsilon)>1$ and for all $t \ge t_0$, $L(xt)/L(t) \ge 1-\varepsilon$.
% Hence $L(x^n t_0)/L(t_0) \ge (1-\varepsilon)^n$ for all $n \in \N$.
% which implies
% \[ \begin{split}
% \int_{t_0}^{+\infty} L(t) \diff t &= \sum_{n \in \N} \int_{x^n  t_0}^{x^{n+1}  t_0} L(t) \diff t \\
% & \ge L(t_0) \sum_{n \in \N} (1-\varepsilon)^{n+1} t_0 (X^{n+1}-x^n)
% & = t_0(1-1/x)L(t_0) \sum_{n \in \N} [(1-\varepsilon)x]^{n+1}=+\infty.
%  \end{split}
%  \]
%
%
Examples of slowly varying functions are $(\ln p(\cdot))^\alpha$ where $p(\cdot)$ is a polinomial and $\alpha \in \mathbb{R}$.

The following proposition gives sufficient conditions for extinction or survival.

\begin{pro}\label{pro:homogeneous1}
 Suppose that $R_{i,j} \sim R$ and $N_i \sim N$ for all $i \in \N$, $j \in \N^*$.
 \begin{enumerate}
\item If $\liminf_{n \to \infty} n(1-G_N(\pr(R < n))) >1$ then $\pr(V)>0$ and
$\mu(\mathbf{n}\colon \pr(V|\mathcal{N}=\mathbf{n})>0)=\pr(N>0)$. %>0$.
\item If $\limsup_{n \to \infty} n(1-G_N(\pr(R < n))) <1$ then 
$\pr(V)=0$ and  $\pr(V| \mathcal{N}=\mathbf{n})=0$ for $\mu$-almost all configurations $\mathbf{n}$.
  \item If $\E[N]<+\infty$ and $\limsup_{n \to \infty} n \pr(R \ge n) < 1/\E[N]$ 
then
$\pr(V)=0$ and  $\pr(V| \mathcal{N}=\mathbf{n})=0$ for $\mu$-almost all configurations $\mathbf{n}$.
\item If $\E[N]<+\infty$ and $\E[R]<+\infty$ then $\pr(V)=0$ and  
$\pr(V| \mathcal{N}=\mathbf{n})=0$ for $\mu$-almost all configurations $\mathbf{n}$.
\item If $\liminf_{n \to \infty} n \pr(R \ge n) G^\prime_N(\pr(R < n)) >1$ then $\pr(V)>0$ and
$\mu(\mathbf{n}\colon \pr(V|\mathcal{N}=\mathbf{n})>0)=\pr(N>0)$. In particular this holds if 
$\liminf_{n \to \infty} n \pr(R \ge n) > 1/\E[N]$ (where $1/\E[N]:=0$ if $\E[N]=+\infty$).
 \end{enumerate}
\end{pro}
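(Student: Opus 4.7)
The plan is to reduce all five items to the convergence dichotomy of Theorem~\ref{thm:2}: survival (together with the quenched conclusion $\mu(\mathbf{n}\colon \pr(V|\mathcal{N}=\mathbf{n})>0)=\pr(N>0)$) holds iff the series $\sum_{n=0}^\infty \prod_{i=0}^n G_N(\pr(R<i+1))$ converges. Writing $a_n := 1 - G_N(\pr(R<n))$, the $n$-th partial product equals $\prod_{i=1}^{n+1}(1-a_i)$, and the whole argument hinges on the elementary asymptotic
\[
 \prod_{i=i_0}^{n}\Big(1-\frac{\beta}{i}\Big) \;\asymp\; n^{-\beta} \qquad (\beta\ge 0),
\]
obtained by summing $\log(1-\beta/i) = -\beta/i + O(i^{-2})$ past the index where $\beta/i<1$.

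For item~(1), $\liminf_n na_n>1$ forces $a_n\ge (1+\delta)/n$ eventually, so $\prod_{i=1}^{n+1}(1-a_i)=O(n^{-(1+\delta)})$, the series converges, and Theorem~\ref{thm:2}(2) gives the survival statement. For item~(2), $\limsup_n na_n<1$ forces $a_n\le (1-\delta)/n$ eventually, so the partial products are bounded below by a constant times $n^{-(1-\delta)}$, the series diverges, and Theorem~\ref{thm:2}(1) gives $\mu$-a.s.~extinction.

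Items (3)--(5) are obtained by bracketing $a_n$ using convexity of $G_N$ on $[0,1]$. The tangent-line bounds at $s=1$ and at a generic $s\in[0,1]$ give
\[
 G_N'(s)\,(1-s) \;\le\; 1-G_N(s) \;\le\; \E[N]\,(1-s).
\]
Setting $s=\pr(R<n)$, so that $1-s=\pr(R\ge n)$, the upper bound reads $a_n\le \E[N]\pr(R\ge n)$, which reduces item~(3) directly to~(2). Item~(4) then reduces to~(3), since $\E[R]<+\infty$ forces $n\pr(R\ge n)\le \E[R\,\ident_{\{R\ge n\}}]\to 0$ by dominated convergence. The lower bound yields $a_n\ge G_N'(\pr(R<n))\,\pr(R\ge n)$, so the hypothesis of~(5) gives $\liminf_n na_n>1$ and item~(1) applies. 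The ``in particular'' assertion follows by monotone convergence: as $\pr(R<n)\uparrow 1$ one has $G_N'(\pr(R<n))\uparrow \E[N]$ (including $\E[N]=+\infty$), while if $\pr(R<\infty)<1$ then $\liminf_n n\pr(R\ge n)=+\infty$ trivially.

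The only real care I anticipate is keeping track of strict versus non-strict inequalities and the auxiliary $\delta>0$ when converting the $\liminf$/$\limsup$ hypotheses into pointwise estimates on $a_n$, so that the threshold value $1$ falls strictly on the correct side of the exponent governing the asymptotic product. The convexity bounds above are standard, and once $a_n$ has been pinched in this way, every item collapses to Theorem~\ref{thm:2}.
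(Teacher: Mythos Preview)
Your proof is correct and follows essentially the same route as the paper: items (1)--(2) via comparison of the partial products $\prod(1-a_i)$ with $n^{-\beta}$, and items (3)--(5) via the convexity sandwich $G_N'(s)(1-s)\le 1-G_N(s)\le \E[N](1-s)$ reducing back to (1)--(2). The only cosmetic difference is that the paper packages the product estimate for (1)--(2) as an application of Kummer's test (with auxiliary sequence $p_n=n+2$) rather than expanding $\sum\log(1-\beta/i)$ directly; the two arguments are equivalent.
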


The following corollary gives sufficient conditions for extinction or survival where
 $N$ and $R$ play disjoint roles.

\begin{cor}\label{cor:homogeneous1}
Suppose that $R_{i,j} \sim R$ and $N_i \sim N$ for all $i \in \N$, $j \in \N^*$.

If $\pr(N>n) \sim n^{-\alpha} L(n)$ (as $n \to \infty$) for some $\alpha \in (0,1)$ and a slowly varying function $L$
then
\begin{enumerate} %\setcounter{enumi}{\value{saveenum}}
 \item $\liminf_{n \to \infty} n \pr(R \ge n)^{\alpha}L(1/\pr(R \ge n)) \Gamma(1-\alpha)  >1$ implies $\pr(V)>0$ and
$\mu(\mathbf{n}\colon \pr(V|\mathcal{N}=\mathbf{n})>0)=\pr(N>0)$; %>0$.
\item $\limsup_{n \to \infty} n \pr(R \ge n)^{\alpha}L(1/\pr(R \ge n)) \Gamma(1-\alpha)  <1$ implies 
$\pr(V)=0$ and  $\pr(V| \mathcal{N}=\mathbf{n})=0$ for $\mu$-almost all configurations $\mathbf{n}$.
\setcounter{saveenum}{\value{enumi}}
\end{enumerate}
If  $\pr(N>n) \sim c n^{-1}$  (for some $c >0$) then 
\begin{enumerate} \setcounter{enumi}{\value{saveenum}}
 \item $\liminf_{n \to \infty} n \ln(1/\pr(R \ge n)) \pr(R \ge n) >1/c$ implies $\pr(V)>0$ and
$\mu(\mathbf{n}\colon \pr(V|\mathcal{N}=\mathbf{n})>0)=\pr(N>0)$; %>0$.
\item $\limsup_{n \to \infty} n \ln(1/\pr(R \ge n)) \pr(R \ge n)   <1/c$ implies 
$\pr(V)=0$ and  $\pr(V| \mathcal{N}=\mathbf{n})=0$ for $\mu$-almost all configurations $\mathbf{n}$.
\end{enumerate}

\end{cor}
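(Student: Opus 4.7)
The plan is to deduce the corollary directly from parts (1) and (2) of Proposition~\ref{pro:homogeneous1} by computing the sharp asymptotic behavior of $n(1-G_N(\pr(R<n)))$ under each tail assumption on $N$. Set $s_n:=\pr(R\ge n)$, which tends to $0$ as $n\to\infty$, so that $\pr(R<n)=1-s_n$ and the expression of interest becomes $n(1-G_N(1-s_n))$.

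The analytical core is to establish two asymptotic equivalences as $s\to 0^+$:
(A) if $\pr(N>n)\sim n^{-\alpha}L(n)$ with $\alpha\in(0,1)$, then $1-G_N(1-s)\sim \Gamma(1-\alpha)\,s^\alpha L(1/s)$;
(B) if $\pr(N>n)\sim c n^{-1}$, then $1-G_N(1-s)\sim c\,s\ln(1/s)$.
Starting from the elementary summation-by-parts identity
$$1-G_N(1-s)=s\sum_{n\ge 0}(1-s)^n\pr(N>n),$$
I would apply Karamata's theorem to the partial sums of $\pr(N>k)$ (which gives $\sum_{k=0}^n\pr(N>k)\sim n^{1-\alpha}L(n)/(1-\alpha)$ in case (A) and $\sim c\ln n$ in case (B)) and then invoke Feller's Tauberian theorem for power series to transfer the asymptotic to the generating function $\sum_{n\ge 0}(1-s)^n\pr(N>n)$ as $s\to 0^+$; the monotonicity of $n\mapsto\pr(N>n)$ guarantees that the Tauberian side conditions hold. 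The constant $\Gamma(1-\alpha)$ in (A) arises from $\Gamma(2-\alpha)=(1-\alpha)\Gamma(1-\alpha)$ when converting Karamata's constant $1/(1-\alpha)$ to the $1/\Gamma(\rho+1)$ normalization required by Feller's theorem with $\rho=1-\alpha$.

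Once (A) and (B) are in place, substituting $s=s_n$ and using that asymptotic equivalences are preserved along any sequence tending to $0^+$ yields $n(1-G_N(1-s_n))\sim n\,\Gamma(1-\alpha)\,s_n^\alpha L(1/s_n)$ in case (A) and $n(1-G_N(1-s_n))\sim c\,n\,s_n\ln(1/s_n)$ in case (B). Therefore the hypothesis $\liminf>1$ (resp.\ $\limsup<1$) in parts (1) and (2) of the corollary is equivalent to the condition $\liminf n(1-G_N(\pr(R<n)))>1$ (resp.\ $\limsup<1$) required by Proposition~\ref{pro:homogeneous1}(1)-(2); the same holds for parts (3) and (4) after multiplying through by $c$. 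Applying that proposition gives the survival and extinction conclusions, including the quenched statement $\mu(\mathbf{n}\colon\pr(V\mid\mathcal{N}=\mathbf{n})>0)=\pr(N>0)$.

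The only delicate point is the Tauberian step, which I expect to be the main obstacle. It requires a careful check that the stated tail asymptotics for $\pr(N>n)$ produce the claimed behavior of the generating function at $1$, together with the identification of the correct constant (involving $\Gamma(1-\alpha)$, respectively the logarithmic factor). This is a standard computation in the theory of regular variation (see, e.g., Bingham-Goldie-Teugels), made routine here by the monotonicity of the tail.
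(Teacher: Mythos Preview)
Your proposal is correct and follows essentially the same route as the paper: both reduce to Proposition~\ref{pro:homogeneous1}(1)--(2) via the identity $(1-G_N(x))/(1-x)=\sum_{n\ge 0}\pr(N>n)x^n$ and a Tauberian estimate for this series as $x\to 1^-$. The only minor differences are that the paper applies Feller's Tauberian theorem directly to the monotone coefficients $\pr(N>n)$ (so your detour through Karamata on partial sums is unnecessary), and for the boundary case $\pr(N>n)\sim c/n$ the paper gives an elementary sandwich argument comparing $\sum_n \pr(N\ge n)x^n$ with $c\sum_n x^n/n=-c\ln(1-x)$ rather than invoking the Tauberian theorem at $\rho=0$; both variants lead to the same conclusion.
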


Observe that, by coupling, if $\pr(N>n) \ge n^{-\alpha} L(n)$ (resp.~$\pr(N>n) \le n^{-\alpha} L(n)$) when $\alpha \in (0,1)$
then the conclusion of Corollary~\ref{cor:homogeneous1}(1) (resp.~Corollary~\ref{cor:homogeneous1}(2))
still holds. An analogous result holds for the case $\alpha=1$.

%  In general, from $\liminf_{n \to \infty} n(1-G_N(\pr(R \le n))) >1$ does not follow
% $\mu(\mathbf{n}:\pr(V|\mathcal{N}=\mathbf{n})>0)=1$. Indeed, if $\mu(\mathbf{n}\colon \pr(V|\mathcal{N}=\mathbf{n})>0) \le 1-\pr(N_0>0)$ and the last might be strictly smaller than 
% $1$.

% \textcolor{red}{
% Is it possible to improve Corollary~\ref{cor:homogeneous1}(2) by using the same technique (averaging) as in 
%  Lemma~\ref{lem:1} and Theorem~\ref{th:reverseinhomogeneous}? Probably not, since when conditioning on
% a realization we have to deal in general with a heterogeneous process.
% % a realization we have to deal in general with a heterogeneous process. A hint for a counterexample is the following:
% % consider $N_i$ with value in $\{1,n_i\}$ with probabilities $\{p_i, 1-p_i\}$. This way we can control $\E[N_i]$ and,
% % at the same time, for instance, we can have $N_i=1$ eventually as $n \to +\infty$.
% If we want to find conditions for $\pr(V)>0$ and
% $\mu(\mathbf{n}\colon \pr(V|\mathcal{N}=\mathbf{n})>0)>0$ when $\E[N]=+\infty$ we need to have an asymptotic for 
% $1-G_N(\pr(R \le n))$ as $n \to \infty$. Try to use a Tauberian theorem.}

\begin{rem}\label{rem:finiteradiussurvival}
\begin{enumerate}[(i)]
 \item 
For every fixed unbounded random variable $R$ (with finite or infinite expected value), 
there exists a random variable $N$ such that the
firework process (with $R_{i,j} \sim R$ and $N_i \sim N$ for all $i \in \N$ and $j \in \N^*$) survives. 
Let us fix $\varepsilon >0$, $\delta \in (0,1)$ and choose a variable $N$ such that
\[
 \pr \Big (N \ge \frac{\ln(1-\delta)}{\ln(\pr(R<n))} \Big ) \ge \frac{1+\varepsilon}{n\delta}
\]
for every sufficiently large $n$. Such $N$ exists since $\pr(R<n)<1$ for all $n \in \mathbb{N}$.
Indeed consider
\[
 \begin{split}
n  (1-G_N(\pr(R < n)))&= n \E[1-\pr(R<n)^N] \\
&\ge n \Big (1-\pr(R<n)^{\ln(1-\delta)/\ln(\pr(R<n))} \Big) \pr \Big (N \ge \frac{\ln(1-\delta)}{\ln(\pr(R<n))} \Big )\\
&= n\delta\, \pr \Big (N \ge \frac{\ln(1-\delta)}{\ln(\pr(R<n))} \Big ) \ge 1+\varepsilon
 \end{split}
\]
thus Proposition~\ref{pro:homogeneous1}(1) applies. 
\item
For every fixed (bounded or unbounded) $N$ such that %$\E[N] \in (0, +\infty]$ 
$\pr(N=0)<1$, 
there exists $R$ such that 
the
firework process (with $R_{i,j} \sim R$ and $N_i \sim N$ for all $i \in \N$ and $j \in \N^*$) survives. 

Indeed, define $p_n := \inf \{t \ge 0 \colon G_N(1-t) \le 1-2/n\}$. Since $G_N(1-t)<1$ for all $t >0$ we have that
$p_n \downarrow 0$ as $n \to \infty$; moreover, by continuity, $G_N(1-p_n) \le 1-2/n$. By construction
$\liminf_{n \to \infty} n(1-G_N(1-p_n)) \ge 2$, hence if $\pr(R \ge n) =p_n$ then
Proposition~\ref{pro:homogeneous1}(1) applies.

Another proof can be derived by coupling from the following example. Let $N$ be a Bernoulli random variable with parameter
$p>0$. In this case $n(1-G_N(\pr(R<n)))=n p \pr(R \ge n)$, hence if, for instance, $\pr(R \ge n)=2 /(pn)$ then
according to Proposition~\ref{pro:homogeneous1}(1) there is survival. Since every nontrivial $N$ stochastically dominates
a Bernoulli variable with parameter $p=\pr(N>0)$, by coupling, we have survival of the homogeneous firework process
associated with $N$ and $R$ (where  $\pr(R \ge n)=2 /(pn)$).
\end{enumerate}
\end{rem}

We note that if $\pr(N=0)\in (0,1)$ in almost every realization there are connected sequences of empty vertices of
arbitrarily large length and nevertheless the process may survive  with positive probability.
This happens in particular in the Bernoulli case where we have at most one station per site.
This
proves, for instance, that the sufficient conditions of \cite[Theorem 2.2]{cf:JMZ} are not necessary.

\begin{rem}\label{rem:expectedvalues}
 Here we consider the possible behaviors of the system depending on the convergence/divergence of the expected values
$\E[N]$ and $\E[R]$.
\begin{itemize}
 \item 
If $\E[N]<+\infty$ and $\E[R]<+\infty$ then there is a.s.~extinction for almost every configuration
$\mathbf{n}$ (see Proposition~\ref{pro:homogeneous1}(4)).
\item If $\E[N]=+\infty$ and $\E[R]<+\infty$ then both survival and extinction are possible.
Indeed Remark~\ref{rem:finiteradiussurvival} proves that survival for almost every configuration $\mathbf{n}$ is possible.
On the other hand if $\pr(N \ge n) \sim n^{-\alpha}$ (for some $\alpha \in (0,1)$)
and
$\pr(R \ge n) \sim n^{-\frac{1}{\alpha}-\epsilon}$, according to Corollary~\ref{cor:homogeneous1}(2), we have a.s.~extinction
for almost every configuration $\mathbf{n}$.
\item
If $\E[N]<+\infty$ and $\E[R]=+\infty$ then both survival and extinction are possible.
Indeed fix any $N$ such that $0<\E[N]<+\infty$ and suppose that $\pr(R \ge n) \sim \alpha/n$; then, according to 
Proposition~\ref{pro:homogeneous1},
if $\alpha >1/\E[N]$ there is survival for almost every configuration $\mathbf{n}$ while
if $\alpha <1/\E[N]$ there is extinction for almost every configuration $\mathbf{n}$.
\item
If $\E[N]=+\infty$ and $\E[R]=+\infty$ then, again, both survival and extinction are possible.
Survival is easy: take $\pr(R \ge n) =p_n \vee 1/n$ (where $p_n$ is defined as in Remark~\ref{rem:finiteradiussurvival}(ii))
and apply Proposition~\ref{pro:homogeneous1}(1).

As for the extinction consider $N$ and $R$ such that $\pr(N \ge n) \sim 1/n$ as $n \to \infty$ 
and $\pr(R \ge n)=1/\big (n \ln(n) \ln(\ln(n)) \big )$ for every sufficiently large $n$. Clearly 
$\sum_{n \in \N} \pr(R \ge n )= \sum_{n \in \N} \pr(N \ge n )=+\infty$; moreover 
\[
 n \ln(\frac{1}{\pr(R \ge n)})\pr(R \ge n)=\frac{\ln(n) +\ln(\ln(n)) +\ln(\ln(\ln(n)))}{\ln(n) \cdot \ln(\ln(n))} \to 0
\]
as $n \to \infty$, hence Corollary~\ref{cor:homogeneous1}(4) applies and 
there is extinction for almost every configuration $\mathbf{n}$. 
\end{itemize}
\end{rem} 

The previous remark is summarized by the following table.
 \begin{center}
 \begin{tabular}{c|c|c}
 & $\E[N]<+\infty$ & $\E[N]=+\infty$ \\ \hline
 $\E[R]<+\infty$ & extinction & extinction/survival \\ \hline
 $\E[R]=+\infty$ & extinction/survival & extinction/survival\\
 \end{tabular}
 \end{center}

In the heterogeneous case we have a sufficient condition for survival.

\begin{teo}\label{thm:3}
 In the heterogeneous case, if
\begin{equation}\label{eq:sumprod3}
 \sum_{n=0}^\infty \prod_{i=0}^n G_{N_{i}}(\pr(R_{i} < n-i+1))<+\infty
\end{equation}
then $\pr(V)>0$. Moreover $\pr(V|\mathcal{N}=\mathbf{n})>0$ for $\mu$-almost all configurations $\mathbf{n}$.
\end{teo}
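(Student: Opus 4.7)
The plan is to rewrite survival as the intersection over $n$ of monotone ``no-gap'' events on the independent radii $\{\widetilde R_i\}_{i\ge 0}$ and then invoke the Harris--FKG inequality for product measures. Using the initial-segment property of the reached set on $\N$ (the activated set is always $\{0,1,\ldots,M\}$ for some $M\in\N\cup\{+\infty\}$), extinction happens precisely when some level is ``blocked'':
\[
 V^{c}=\bigcup_{n\ge 0}G_n, \qquad G_n:=\bigl\{\widetilde R_i<n-i+1 \text{ for all } 0\le i\le n\bigr\},
\]
and $\pr(G_n)=\prod_{i=0}^n G_{N_i}(\pr(R_i<n-i+1))$ is exactly the $n$-th summand in~\eqref{eq:sumprod3}. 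Indeed, if $G_n$ holds then no vertex of $\{0,\ldots,n\}$ can transmit beyond $n$ so $M\le n$; conversely, if $M=m<+\infty$ then $G_m$ is forced.

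Conditioning on $\mathcal{N}=\mathbf{n}$, the variables $\widetilde R_i$ are independent with $\pr(\widetilde R_i<t\mid\mathbf{n})=\pr(R_i<t)^{n_i}$, so $\pr(\cdot\mid\mathbf{n})$ is a product measure on $\prod_{i\ge 0}[0,+\infty)$. Each complement $G_n^{c}=\bigcup_{i\le n}\{\widetilde R_i\ge n-i+1\}$ is an increasing event in the coordinates $(\widetilde R_i)_{i\ge 0}$. Applying the Harris--FKG inequality inductively to the finite intersections $\bigcap_{n\le N}G_n^{c}$ and then passing to the limit $N\to\infty$ by monotone convergence yields
\[
 \pr(V\mid\mathbf{n})\;\ge\;\prod_{n\ge 0}\bigl(1-\pr(G_n\mid\mathbf{n})\bigr),\qquad
 \pr(G_n\mid\mathbf{n})=\prod_{i=0}^n\pr(R_i<n-i+1)^{n_i}.
\]

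By Fubini, $\E\bigl[\sum_n\pr(G_n\mid\mathbf{n})\bigr]=\sum_n\pr(G_n)<+\infty$ by hypothesis, hence $\sum_n\pr(G_n\mid\mathbf{n})<+\infty$ for $\mu$-almost every $\mathbf{n}$. Combined with the standing nondegeneracy $\pr(R_i<1)\in(0,1)$, which on configurations with $n_0\ge 1$ keeps each factor $1-\pr(G_n\mid\mathbf{n})$ strictly positive (a short Borel--Cantelli argument based on $\sum_n\mu(\pr(G_n\mid\mathbf{n})=1)\le\sum_n\pr(G_n)<+\infty$ controls any exceptional levels), the infinite product is strictly positive for $\mu$-a.e.\ such $\mathbf{n}$. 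Therefore $\pr(V\mid\mathcal{N}=\mathbf{n})>0$ for $\mu$-a.e.\ $\mathbf{n}$, and integrating through~\eqref{eq:disintegration} yields $\pr(V)>0$.

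The main obstacle is the rigorous deployment of the Harris--FKG inequality in this infinite-dimensional, real-valued product setting: one must verify increasingness of each $G_n^{c}$ in the coordinate-wise order on the $\widetilde R_i$'s, apply the inequality inductively to arbitrarily many increasing events, and then promote it to the countable intersection by monotone continuity; disposing of the degenerate configurations where some $\pr(G_n\mid\mathbf{n})$ equals $1$ is a secondary but necessary verification. Once these are handled, the summability of~\eqref{eq:sumprod3} exactly guarantees that $\prod_{n}(1-\pr(G_n\mid\mathbf{n}))$ stays bounded away from $0$ on a set of full $\mu$-measure within $\{n_0\ge 1\}$, giving both the annealed and quenched conclusions.
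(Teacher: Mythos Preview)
Your approach is essentially the paper's: both identify the same ``gap'' events (your $G_n$ is the paper's $E_n$), use their summable annealed probabilities, and invoke FKG on the independent annealed radii $\widetilde R_i$. The paper first runs Borel--Cantelli to locate an $n_0$ past which no gap occurs with positive probability, then applies FKG once to the two increasing events $\bigcap_{k\ge n_0}E_k^c$ and $V_{n_0}=\{0,\dots,n_0\text{ all activated}\}$; you instead apply FKG to the full countable family $\{G_n^c\}_{n\ge0}$ and read off the product lower bound directly. Your Fubini step for the quenched conclusion is exactly Lemma~\ref{lem:1}. So the organization differs slightly but the mechanism is identical.

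There is, however, a genuine error in your positivity argument. The assumption $\pr(R_i<1)\in(0,1)$ together with $n_0\ge1$ only forces $\pr(G_0\mid\mathbf n)<1$; for $n\ge1$ nothing prevents $\pr(G_n\mid\mathbf n)=1$ (take $R_0$ bounded by some $M$ and $n_1=\cdots=n_n=0$ for $n\ge M$). Your Borel--Cantelli estimate $\sum_n\mu(\pr(G_n\mid\mathbf n)=1)\le\sum_n\pr(G_n)<\infty$ only yields that \emph{finitely many} factors vanish for $\mu$-a.e.\ $\mathbf n$, which is useless here: a single vanishing factor already kills the product, and indeed $\pr(G_n\mid\mathbf n)=1$ forces $\pr(V\mid\mathbf n)=0$ outright, so such configurations cannot be ``controlled''. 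The paper's proof has the very same lacuna---it simply asserts $\pr(V_{n_0})>0$---so under the implicit non-degeneracy that makes the statement true (for instance all $R_i$ unbounded, whence $\pr(G_n\mid\mathbf n)\le\pr(R_0<n+1)^{n_0}<1$ whenever $n_0\ge1$) both arguments are complete; but your Borel--Cantelli patch does not supply what is missing.
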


% 
% \textcolor{red}{
% \begin{cor}\label{cor:inhomogeneous1}
%  Similar to Proposition~\ref{pro:homogeneous1}(2), then Corollary~\ref{cor:homogeneous1}(1) and (3).
%  But we need to find conditions for the convergence of the series, which here seems tricky...
% \end{cor}
% }

\section{Reverse Firework process on $\N$}\label{sec:reverse}

We use the same notation as in Section~\ref{sec:firework}.
% As in the previous section,e the environment is the random sequence $\mathcal{N}$ defined in Section~\ref{subsec:N}
% and its law is 
% denoted by $\mu$. Let $\pr$ be the probability measure defined in Section~\ref{subsec:genericlabelledtrees}.
We denote by $S$ the event ``the reverse process survives''. 
 Here we suppose that 
there is always one station at $0$. In the general case of a random number $N_0$ of station at $0$ we can condition
on the events $\{N_0=0\}$ and $\{N_0>0\}$: in the first case we have extinction, in the second one we apply the results
of this section.
As before, the results for the deterministic case with one station per site can be retrieved by using $G_N(t) \equiv t$.

\begin{teo}\label{th:reversehomogeneous}
 Let $\{R_{i,j}\}_{i,j \in \N^*}$ and $\{N_i\}_{i \in \N^*}$ be two i.i.d.~families. Define
$W:=\sum_{n \in \N} (1-G_N(\pr(R<n)))$ (or $W:=\int_0^\infty (1-G_N(\pr(R<t))) \diff t$).
\begin{enumerate}
 \item If $W=+\infty$ then $\pr(S|\mathcal{N}=\mathbf{n})=1$ for $\mu$-almost all configurations $\mathbf{n}$.
\item If $W<+\infty$ then $\pr(S|\mathcal{N}=\mathbf{n})=0$ for $\mu$-almost all configurations $\mathbf{n}$.
\end{enumerate}
\end{teo}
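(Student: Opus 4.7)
The plan is to reduce survival of the reverse firework process on $\N$ to a one-dimensional Markov chain of successive active vertices, and then deduce the two cases from a Borel--Cantelli--type argument at the level of the environment together with an annealed/quenched comparison.

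On $\N$, since the activation constraint $\widetilde R_x\ge x-y$ is weakest at the largest active $y<x$, the active set is an increasing sequence $\xi_0=0<\xi_1<\xi_2<\cdots$ given recursively by
\[
\xi_{k+1}:=\min\{x>\xi_k\colon \widetilde R_x\ge x-\xi_k\}
\]
(with $\min\emptyset=+\infty$), and survival is equivalent to $\xi_k<+\infty$ for every $k$. Conditionally on $\mathcal{N}=\mathbf{n}$, Section~\ref{subsec:annealed} yields $\pr(\widetilde R_x<m\mid\mathcal{N}=\mathbf{n})=\pr(R<m)^{n_x}$ for $m\ge 1$, and the $\widetilde R_x$ are independent across $x$, so the quenched one-step absorption probability is
\[
q_y(\mathbf{n}):=\pr(\xi_{k+1}=+\infty\mid\xi_k=y,\mathcal{N}=\mathbf{n})=\prod_{m=1}^\infty \pr(R<m)^{n_{y+m}},
\]
which is positive if and only if $\sum_{m\ge 1}\bigl(1-\pr(R<m)^{n_{y+m}}\bigr)<+\infty$.

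Next I transfer this finiteness condition to one in terms of $W$. Set $Y_m:=1-\pr(R<m)^{N_{y+m}}$; these are independent random variables in $[0,1]$ with $\E_\mu Y_m=1-G_N(\pr(R<m))$, so that $\sum_m\E_\mu Y_m=W$ (the equivalence of the integral and series versions of $W$ follows from the monotonicity of $t\mapsto 1-G_N(\pr(R<t))$). If $W<+\infty$, then $\sum_m Y_m<+\infty$ $\mu$-a.s.\ by Fubini, hence $q_y(\mathbf{n})>0$ $\mu$-a.s. If $W=+\infty$, the elementary bound $e^{-y}\le 1-y/2$ on $[0,1]$ gives
\[
\E_\mu\bigl[e^{-\sum_m Y_m}\bigr]=\prod_m\E_\mu[e^{-Y_m}]\le\prod_m\bigl(1-\tfrac12\E_\mu Y_m\bigr)=0,
\]
so $\sum_m Y_m=+\infty$ and $q_y(\mathbf{n})=0$ $\mu$-a.s. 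A countable intersection over $y\in\N$ makes both statements hold simultaneously for every $y$, $\mu$-a.s.

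Finally I pass from $q_y$ to $\pr(S\mid\mathcal{N}=\mathbf{n})$. When $W=+\infty$, for $\mu$-a.e.\ $\mathbf{n}$ one has $q_y(\mathbf{n})=0$ for every $y$; hence $\xi_{k+1}<+\infty$ $\pr$-a.s.\ on $\{\xi_k<+\infty\}$, and an immediate induction yields $\pr(S\mid\mathcal{N}=\mathbf{n})=1$. When $W<+\infty$, I bypass the direct computation of the quenched survival probability by an annealed argument: $\xi_k$ is measurable with respect to $\{N_x,R_{x,i}\colon x\le\xi_k\}$ while $q_{\xi_k}$ depends only on $\{N_x\colon x>\xi_k\}$, and the product structure $\pr=\mu\times\nu$ makes these independent; annealed, the absorption events at consecutive steps are therefore i.i.d.\ Bernoulli of parameter $Q:=\prod_m G_N(\pr(R<m))$. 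Since $W<+\infty$ forces $Q>0$, $\pr(S)\le(1-Q)^K\to 0$, and \eqref{eq:disintegration} gives $\pr(S\mid\mathcal{N}=\mathbf{n})=0$ for $\mu$-a.e.\ $\mathbf{n}$. The main delicate point is this independence of consecutive annealed absorption events, which is what collapses the computation to a Bernoulli product and allows the annealed extinction to be disintegrated into the quenched statement.
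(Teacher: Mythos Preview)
Your proof is correct and essentially self-contained, whereas the paper's proof is a two-line reduction to \cite[Theorem~2.8]{cf:JMZ}. The paper passes to the annealed counterpart (one station per site with radius $\widetilde R$), observes that $\E[\widetilde R]<\infty$ if and only if $W<\infty$, invokes the cited result to conclude $\pr(S)\in\{0,1\}$ accordingly, and then disintegrates via~\eqref{eq:disintegration}. You instead rediscover the renewal structure that underlies \cite[Theorem~2.8]{cf:JMZ}: the chain $(\xi_k)_k$ of successive rightmost active sites, whose increments are, annealed, i.i.d.\ with absorption probability $Q=\prod_{m\ge1}G_N(\pr(R<m))$. Your treatment of the case $W=+\infty$ is actually more direct than the paper's, since you establish $q_y(\mathbf n)=0$ for every $y$ at the quenched level (via the exponential-moment trick) rather than going through the annealed $\pr(S)=1$ and disintegrating. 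For $W<+\infty$ both arguments coincide: annealed extinction, then~\eqref{eq:disintegration}.

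One phrasing to tighten: ``the absorption events at consecutive steps are i.i.d.\ Bernoulli'' is not literally true as stated, since $\{\xi_{k+1}=\infty\}$ only makes sense on $\{\xi_k<\infty\}$. What you actually need (and what your stopping-time argument does justify) is the renewal identity $\pr(\xi_{k+1}<\infty\mid\xi_k<\infty)=1-Q$, from which $\pr(\xi_K<\infty)=(1-Q)^K$ follows by induction. This is a cosmetic point; the mathematics is sound.
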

% 
% \textcolor{red}{Check whether the proof of \cite[Theorem 2.8]{cf:JMZ} is correct or not: to this aim check if 
% $\{\ident_{\cup_{j\ge 0}[R_{n-j} \ge j+1]}\}_{n \ge 0}$ have the same finite joint distribution
% of $\{\ident_{\cup_{j\ge 0}[R_{m+j+1} \ge j+1]}\}_{m \ge 0}$. The idea, for every fixed $N$,
% consider $\{\ident_{\cup_{j\ge 0}[R_{n-j} \ge j+1]}\}_{n=0, \ldots, N}$ and 
% $\{\ident_{\cup_{j\ge 0}[R_{m+j+1} \ge j+1]}\}_{m=0, \ldots, N}$, and use the map $\phi(i)=N-i+1$.
% This way $\ident_{\cup_{j\ge 0}[R_{\phi(m+j+1)} \ge j+1]} \sim \ident_{\cup_{j\ge 0}[R_{N-m-j} \ge j+1]}$
% and $N-m \in \{0, \ldots,N\}$ as $m \in  \{0, \ldots,N\}$. It seems to work; if so, is it true that
% in Theorems~\ref{th:GW-Homogeneous-reverse-Firework} and \ref{th:GW-Homogeneous-reverse-Firework2}
% that when the probability of survival is positive, it is actually $1$? 
% }

\begin{rem}\label{rem:finiteradiussurvival2}
\begin{enumerate}[(i)]
 \item If the homogeneous reverse firework process associated with $N$ and $R$ dies out almost surely, so does
the homogeneous firework process. Indeed, in this case $\sum_{n \in \N} (1-G_N(\pr(R<n)))<+\infty$, hence
$\lim_{n \to \infty} n(1-G_N(\pr(R<n)))=0$ (see Lemma~\ref{lem:lem2}) and Proposition~\ref{pro:homogeneous1}(ii)
applies. Hence if there is survival for the firework process then there is survival for the reverse firework process.
\item 
For every fixed unbounded random variable $R$ (with finite or infinite expected value), 
there exists a random variable $N$ such that the reverse
firework process (with $R_{i,j} \sim R$ and $N_i \sim N$ for all $i \in \N$ and $j \in \N^*$) survives. 
Take the example in Remark~\ref{rem:finiteradiussurvival}(i).
\item
For every fixed (bounded or unbounded) $N$ such that %$\E[N] \in (0, +\infty]$ 
$\pr(N=0)<1$, 
there exists $R$ such that 
the reverse
firework process (with $R_{i,j} \sim R$ and $N_i \sim N$ for all $i \in \N$ and $j \in \N^*$) survives. 
Consider the example in Remark~\ref{rem:finiteradiussurvival}(ii).
\end{enumerate}
\end{rem}

%The previous theorem 
Theorem~\ref{th:reversehomogeneous} gives a necessary and sufficient condition for (almost sure) survival for the 
homogeneous reverse firework process in RE,
that is, $W=+\infty$. This condition, implicitly involves both $N$ and $R$. What we want to do now,
is to find conditions involving separately $N$ and $R$.

\begin{teo}\label{th:big}
Let $\{R_{i,j}\}_{i,j \in \N^*}$ and $\{N_i\}_{i \in \N^*}$ be two i.i.d.~families ($R_{i,j} \sim R$ and $N_i \sim N$) and
$L$ a positive slowly varying function. Define $W$ as in Theorem~\ref{th:reversehomogeneous}.
 \begin{enumerate}
  \item If $\E[N]<+\infty$ then $W <+\infty$ if and only if $\E[R]<+\infty$.
\item If there exists $\varepsilon >0$, $\alpha \in (0, 1)$ %[0, 1)$ 
such that $n^\alpha  \pr(N \ge n)/L(n) \ge \varepsilon$
for all $n \ge 1$ and $\int_0^{+\infty} \pr(R \ge t)^\alpha L(1/\pr(R \ge t)) \diff t =+\infty$ then $W=+\infty$.
\item If there exists $M >0$, $\alpha \in (0, 1)$ such that $n^\alpha \pr(N \ge n)/ L(n)  \le M$
for all $n \ge 1$ and $\int_0^{+\infty} \pr(R \ge t)^\alpha L(1/\pr(R \ge t)) \diff t <+\infty$ then $W<+\infty$.
\item
If there exists $M >0$ such that
$\pr(N \ge n) \le M /n$ for all $n \ge 1$
and $\int_0^{+\infty} \pr(R \ge t) \ln (1/\pr(R \ge t)) \diff t <+\infty$ then $W<+\infty$.
\item
If there exists  $\varepsilon >0$ such that
$\pr(N \ge n) \ge \varepsilon /n$ for all $n \ge 1$
and $\int_0^{+\infty} \pr(R \ge t) \ln (1/\pr(R \ge t)) \diff t =+\infty$ then $W=+\infty$.
 \end{enumerate}
\end{teo}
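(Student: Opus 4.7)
The plan is to reduce everything to pointwise estimates on $1-G_N(1-q) = \E[1-(1-q)^N]$ as $q \to 0^+$, and then substitute $q = \pr(R \ge t)$ and integrate in $t$. Since the integrand $t \mapsto 1-G_N(\pr(R<t))$ is nonincreasing in $t$, the sum $\sum_n(1-G_N(\pr(R<n)))$ and the integral $\int_0^\infty(1-G_N(\pr(R<t)))\,dt$ differ by at most $1$, so both forms of $W$ converge or diverge together and I may use whichever is convenient.

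Part (1) is elementary. From Bernoulli's inequality one has $q\,\I_{\{N\ge 1\}} \le 1-(1-q)^N \le Nq$ for $q\in[0,1]$, hence $\pr(N\ge 1)\,q \le 1-G_N(1-q) \le \E[N]\,q$. Substituting $q=\pr(R\ge t)$ and integrating gives $\pr(N\ge 1)\,\E[R] \le W \le \E[N]\,\E[R]$, so under $\E[N]<\infty$ (and the nontriviality assumption $\pr(N\ge 1)>0$), $W<\infty$ if and only if $\E[R]<\infty$.

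For parts (2)--(5) the starting point is the Abel representation
\[
\E[1-(1-q)^N] \;=\; q\sum_{n\ge 1}(1-q)^{n-1}\,\pr(N\ge n),
\]
which converts one-sided tail hypotheses on $N$ into a power series at $x=1-q$. The key lemma to prove is a Karamata/Abelian asymptotic: for a slowly varying $L$ and $\alpha\in(0,1]$,
\[
q\sum_{n\ge 1}(1-q)^{n-1}\,n^{-\alpha}L(n) \;\asymp\;
\begin{cases} \Gamma(1-\alpha)\,q^\alpha L(1/q), & \alpha\in(0,1),\\[1mm] q\,\ln(1/q)\,L(1/q), & \alpha = 1,\end{cases}
\]
as $q\to 0^+$. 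I would establish this by splitting the sum at $n\sim 1/q$: on the block $n\lesssim 1/q$ the factor $(1-q)^{n-1}$ is of order $1$, while on $n\gtrsim 1/q$ it decays geometrically, so slow variation of $L$ allows replacing $L(n)$ by $L(1/q)$ up to arbitrarily small relative error, reducing the problem to the constant-$L$ case. There one uses the classical asymptotic $\sum_n x^{n-1} n^{-\alpha}\sim \Gamma(1-\alpha)(1-x)^{\alpha-1}$ as $x\to 1^-$ for $\alpha\in(0,1)$, and the exact identity $\sum_n x^{n-1}/n = -\ln(1-x)/x$ for $\alpha=1$. Plugging in the one-sided hypotheses $\pr(N\ge n)\ge \varepsilon\, n^{-\alpha}L(n)$ (for (2) and (5)) or $\le M\, n^{-\alpha}L(n)$ (for (3) and (4)) then yields matching one-sided estimates for $\E[1-(1-q)^N]$ valid for all sufficiently small $q$; substituting $q=\pr(R\ge t)$ and integrating in $t$ reproduces the target integrals $\int \pr(R\ge t)^\alpha L(1/\pr(R\ge t))\,dt$ (resp.\ $\int \pr(R\ge t)\ln(1/\pr(R\ge t))\,dt$), giving parts (2)--(5).

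The main obstacle will be this Karamata estimate with a genuinely slowly varying $L$. With $L\equiv 1$ the bounds follow directly from the generating-function identities cited above, but a general slowly varying factor forces either an appeal to a Tauberian/Abelian theorem for power series with regularly varying coefficients or a careful splitting argument by hand, in which one must ensure that the slow-variation error is uniform enough in $q$ to yield clean, nonasymptotic one-sided inequalities for all sufficiently small $q$. The $\alpha=1$ endpoint is further slightly delicate because the $\ln(1/q)$ factor arises from a boundary regime of Karamata's theorem rather than from a $\Gamma$-function identity, so that case should be handled separately using the explicit logarithmic series.
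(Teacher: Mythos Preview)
Your proposal is correct and follows essentially the same route as the paper. The paper also writes $1-G_N(\pr(R<t))=\pr(R\ge t)\sum_{i\ge 0}\pr(R<t)^i\pr(N\ge i+1)$ (your Abel representation with $q=\pr(R\ge t)$), then obtains the asymptotics of the inner series via the Abelian direction of the Karamata--Tauberian theorem for power series (cited from Feller, Vol.~II, XIII.5, Theorem~5) in the case $\alpha\in(0,1)$, and via the explicit logarithmic series $\sum x^n/n=-\ln(1-x)$ in the case $\alpha=1$; the only difference is that you propose to reprove the Abelian asymptotic by a splitting argument rather than cite it.
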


We note that, according to Theorem~\ref{th:big}(2--3), if there exists $\alpha \in (0, 1)$ such that 
$\pr(N \ge n) \asymp L(n)/n^\alpha$ %as $n \to \infty$ 
then $\int_0^{+\infty} \pr(R \ge t)^\alpha L(1/\pr(R \ge t)) \diff t <+\infty$ if and only if $W<+\infty$.
Analogously, using Theorem~\ref{th:big}(4--5), if  $\pr(N \ge n) \asymp 1/n$ 
%as $n \to \infty$ 
then $\int_0^{+\infty} \pr(R \ge t)  \ln (1/\pr(R \ge t)) \diff t <+\infty$ if and only if $W<+\infty$.

\begin{rem}\label{rem:expectedvalues-reverse}
 As in Remark~\ref{rem:expectedvalues}, we consider the possible behaviors of the reverse firework
process depending on the convergence/divergence of the expected values
$\E[N]$ and $\E[R]$.
\begin{itemize}
 \item 
If $\E[N]<+\infty$ and $\E[R]<+\infty$ then there is a.s.~extinction for almost every configuration
$\mathbf{n}$ (see Theorem~\ref{th:big}(1)).
\item If $\E[N]=+\infty$ and $\E[R]<+\infty$ then both survival and extinction are possible.
Indeed suppose that $\pr(N \ge n) \asymp n^{-\alpha}$ (for some $\alpha \in (0,1)$)
and
$\pr(R \ge n) \asymp n^{-\beta}$
(where by $a_n \asymp b_n$ we mean that there exist 
$m,M \in (0,+\infty)$ such that $a_n/b_n \in [n,M]$ for every sufficiently large $n$). According to the
discussion after Theorem~\ref{th:big},
if $ \beta \in (1, 1/\alpha]$  then there is survival for almost every configuration $\mathbf{n}$,
while if $\beta > 1/\alpha$ there is a.s.~extinction for almost every configuration
$\mathbf{n}$.
\item
If $\E[N]<+\infty$ and $\E[R]=+\infty$ then there is survival for almost every configuration $\mathbf{n}$ 
 (see Theorem~\ref{th:big}(1)).
\item
If $\E[N]=+\infty$ and $\E[R]=+\infty$ then, 
due to a coupling with the case $\E[N]<+\infty$ and $\E[R]=+\infty$, only survival 
is possible.
% Again, survival is trivial: the same construction used in Remark~\ref{rem:expectedvalues} shows
% that, for every fixed $N$ (such that $\E[N] \in(0, +\infty]$) there exists $R$ 
% such that $\sum_{n \in \N} (1-G_N(\pr(R < n))) =+\infty$ and Theorem~\ref{th:big} applies.
\end{itemize}
\end{rem}
 The previous remark is summarized by the following table.
 \begin{center}
 \begin{tabular}{c|c|c}
 & $\E[N]<+\infty$ & $\E[N]=+\infty$ \\ \hline
 $\E[R]<+\infty$ & extinction & extinction/survival \\ \hline
 $\E[R]=+\infty$ & survival & survival\\
 \end{tabular}
 \end{center}

In the heterogeneous case %(that is, the distribution of $R_{i,j}$ and $N_i$ might depend on $i \in \N^*$) 
we have the following result.

\begin{teo}\label{th:reverseinhomogeneous}
%Let $\{R_{i,j}\}_{j \in \N}$ be an i.i.d.~sequence for all $i \in \N^*$.
Consider the heterogeneous reversed firework process on $\N$.
\begin{enumerate}
 \item
$\sum_{k \ge 1} (1-G_{N_{n+k}}(\pr(R_{n+k}<k)))=+\infty$ for all $n \in \N$ if and only if
$\pr(S|\mathcal{N}=\mathbf{n})=1$ for $\mu$-almost all configurations $\mathbf{n}$.
\item
If $\sum_{n \in \N} \prod_{k=1}^\infty G_{N_{n+k}}(\pr(R_{n+k}<k))<+\infty$
then $\pr(S|\mathcal{N}=\mathbf{n})>0$ for $\mu$-almost all configurations $\mathbf{n}$.
\end{enumerate}
\end{teo}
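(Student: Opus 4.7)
I will work under $\pr(\cdot\mid\mathcal{N}=\mathbf{n})$, where the effective per-site radii $\widetilde R_x=\ident_{\{N_x\ge 1\}}\max_{i\le N_x}R_{x,i}$ are conditionally independent with $\pr(\widetilde R_x\ge k\mid\mathbf{n})=1-\pr(R_x<k)^{n_x}$, and introduce the quenched companions of the expressions in the theorem,
\[
W_n(\mathbf{n}):=\sum_{k\ge 1}\bigl(1-\pr(R_{n+k}<k)^{n_{n+k}}\bigr),\qquad q_n(\mathbf{n}):=\prod_{k\ge 1}\pr(R_{n+k}<k)^{n_{n+k}}.
\]
Since $1-y\sim-\log y$ as $y\to 1$, one has $W_n(\mathbf{n})<+\infty\iff q_n(\mathbf{n})>0$. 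Tonelli and the independence of $\{N_{n+k}\}_k$ identify the expectations $\E_\mu[W_n(\mathbf{n})]=\sum_k(1-G_{N_{n+k}}(\pr(R_{n+k}<k)))$ and $\E_\mu[q_n(\mathbf{n})]=\prod_k G_{N_{n+k}}(\pr(R_{n+k}<k))$. Because the summands of $W_n(\mathbf{n})$ are independent $[0,1]$-valued random variables with variance bounded by their mean, a Chebyshev argument yields the dichotomy: the annealed sum is infinite iff $W_n(\mathbf{n})=+\infty$ for $\mu$-a.e.\ $\mathbf{n}$.

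For Part~(1), the forward direction is immediate: $W_0=+\infty$ gives $W_0(\mathbf{n})=+\infty$ $\mu$-a.s., and the second Borel--Cantelli lemma applied to the independent events $\{\widetilde R_k\ge k\}$ then forces $M_1=\sup\{k:\widetilde R_k\ge k\}=+\infty$ a.s., so $\pr(S\mid\mathbf{n})=1$. For the converse I argue by contrapositive. If $W_{n_0}<+\infty$ for some $n_0$ then $q_{n_0}(\mathbf{n})>0$ $\mu$-a.s., and I consider the event
\[
B:=\{\widetilde R_{n_0+k}<k\text{ for every }k\ge 1\},\qquad \pr(B\mid\mathbf{n})=q_{n_0}(\mathbf{n}).
\]
The geometric core is that on $B$ no max $m\le n_0$ can reach any vertex $x>n_0$: writing $x=n_0+k$ one has $\widetilde R_x<k\le x-m$. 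Hence on $B$ the chain of visited maxes is contained in $\{0,\ldots,n_0\}$ and, being strictly increasing, must terminate; so $\pr(\text{die}\mid\mathbf{n})\ge q_{n_0}(\mathbf{n})>0$ on a set of positive $\mu$-measure, contradicting $\pr(S\mid\mathbf{n})=1$ $\mu$-a.s.

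For Part~(2), the hypothesis and Tonelli give $\sum_n q_n(\mathbf{n})<+\infty$ for $\mu$-a.e.\ $\mathbf{n}$. Fix such an $\mathbf{n}$ and pick $m_0$ so large that $\sum_{n\ge m_0+1}q_n(\mathbf{n})<\tfrac12$ and, crucially, $\pr(\widetilde R_{m_0+1}\ge m_0+1\mid\mathbf{n})>0$; this last positivity holds $\mu$-a.s.\ for infinitely many admissible $m_0$ under the standing assumption $\pr(N>0)>0$, by Borel--Cantelli applied to the environment. Setting $B_n:=\{\widetilde R_{n+k}<k,\forall k\ge 1\}$, I consider
\[
A:=\{\widetilde R_{m_0+1}\ge m_0+1\}\cap\bigcap_{n\ge m_0+1}B_n^c.
\]
The two factors depend on the disjoint families $\{\widetilde R_{m_0+1}\}$ and $\{\widetilde R_x:x\ge m_0+2\}$, so by independence and the union bound $\pr(A\mid\mathbf{n})\ge\tfrac12\,\pr(\widetilde R_{m_0+1}\ge m_0+1\mid\mathbf{n})>0$. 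On $A$ the process jumps in one step to $M_1\ge m_0+1$ and thereafter finds an advance available from every visited max (since each satisfies $B_\cdot^c$), so $M_j\to+\infty$ and $\pr(S\mid\mathbf{n})>0$. The main obstacle is the converse of Part~(1): a naive Borel--Cantelli bound only rules out infinite one-step jumps from $n_0$ and does not preclude survival via a cascade of finite advances. Converting the finiteness of $W_{n_0}(\mathbf{n})$ into a positive death probability requires the uniform, simultaneous nature of the bound on $B$---namely $\widetilde R_x<x-m$ for \emph{every} $m\le n_0$ and \emph{every} $x>n_0$ at once---which transforms a pointwise tail statement into a genuine geometric obstruction.
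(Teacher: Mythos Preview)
Your overall strategy---working directly with the quenched quantities $W_n(\mathbf n)$ and $q_n(\mathbf n)$ and redoing the Borel--Cantelli analysis---is a genuine alternative to the paper's proof, which is much shorter because it simply cites \cite[Theorem~2.4]{cf:JMZ} for the one-station-per-site heterogeneous process and uses the monotone/bounded convergence theorems to identify $\E_\mu[\xi_n]$ and $\E_\mu[\zeta]$ with the expressions in the statement. Your Part~(1) is correct and self-contained: the Chebyshev/monotonicity dichotomy for $W_n(\mathbf n)$, the second Borel--Cantelli for the forward direction, and the ``blocking'' event $B$ for the converse all go through as written.

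Part~(2), however, has a genuine gap. You require an $m_0$ with $\pr(\widetilde R_{m_0+1}\ge m_0+1\mid\mathbf n)>0$, i.e.\ $n_{m_0+1}\ge 1$ \emph{and} $\pr(R_{m_0+1}\ge m_0+1)>0$. The second condition is a deterministic statement about the law of $R_{m_0+1}$ and is not implied by the standing hypothesis $\pr(R_x<1)\in(0,1)$; it fails whenever $R_{m_0+1}$ is bounded below $m_0+1$, which can happen for every large $m_0$ in the heterogeneous setting (and even in the homogeneous setting if $R$ is bounded). Your appeal to ``$\pr(N>0)>0$ and Borel--Cantelli on the environment'' addresses only the first condition. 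Thus the one-step jump $\{\widetilde R_{m_0+1}\ge m_0+1\}$ may have zero probability for every admissible $m_0$, and your event $A$ is empty.

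The repair is to replace the one-step jump by the event $C_{m_0}:=\{\text{the process reaches some vertex }>m_0\}$. Since $q_n(\mathbf n)\to 0$, one has $q_j(\mathbf n)<1$ for all $j\ge j_0$; together with the standing assumption $\pr(R_x\ge 1)>0$ (so that $q_j(\mathbf n)\le \pr(R_{j+1}<1)^{n_{j+1}}<1$ whenever $n_{j+1}\ge 1$), one can build a deterministic finite chain $0=v_0<v_1<\cdots<v_\ell>m_0$ with $\pr(\widetilde R_{v_i}\ge v_i-v_{i-1}\mid\mathbf n)>0$ for each $i$, giving $\pr(C_{m_0}\mid\mathbf n)>0$. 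Then $C_{m_0}$ and $\bigcap_{n\ge m_0+1}B_n^c$ are both increasing in the radii, so FKG (rather than independence) yields $\pr(C_{m_0}\cap\bigcap_{n\ge m_0+1}B_n^c\mid\mathbf n)>0$, and your inductive advance argument finishes as before. This is essentially the content of \cite[Theorem~2.4(ii)]{cf:JMZ}, which the paper invokes directly.
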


\section{Firework and Reverse Firework Processes on Galton-Watson trees}
\label{sec:GWtree}

% \textcolor{red}{Remember to check that ``average number of children=1'' implies extinction only when the branching
% process is not trivial (i.e.~1 child a.s.); we have to check this in every BP that we use in the proofs.}
% % Notation: $\textbf{T}$ is a tree, $\tau^L$ is the random GW-tree. In the case of a BRW, $\Upsilon$ is a generic labelled tree
% % and $\tau$ is the random BRW-tree or random labelled tree.}

Let us consider a GW-process with offspring distribution $\rho$.
We know that if $\rho(1)=1$ then the resulting random tree is $\N$.
%along with the random sequence $\mathcal{N}$ of stations. 
This particular case has been studied in Sections~\ref{sec:firework} and \ref{sec:reverse}.
In the rest of the paper we assume that $\rho(1)<1$ and we suppose that $m:=\sum_{n \in \N} n \rho(n)>1$. The underlying random graph
will be a GW-tree generated by this process. 
Henceforth, to avoid a cumbersome notation,
we use the same symbol to denote a tree (as a graph) and its set of vertices.
Let $\varphi(z):=\sum_{n \in \N} \rho(n) z^n$ be the generating function of $\rho$ %the GW-process
 and let $\alpha \in [0,1]$ be the smallest
nonnegative fixed point of $\varphi$.
When $\sum_{i=0}^k \rho(i)=1$ for some $k$ we say that the GW-tree has maximum degree $k$ or 
that it is $k$-bounded.

% Suppose that the number of stations at every vertex is given by a generic i.i.d.~family
% of $\N$-valued random
% variables (independent from the GW-process). 
% We start with the particular case where at each vertex there is at most one station,
% according to a Bernoulli site-percolation with parameter $\lambda \in (0,1]$; the general case
% will be considered later on.

In this section we consider just the homogeneous case:
the set of labels $J$ is $\N$ and the random number of stations
$\{N_w\}_{w \in \mathcal{W}}$ are independent $\N$-valued random variables, 
$N_w \sim N$ for all $w \in \mathcal{W} \setminus\{\emptyset\}$.
We also assume that, in the case of the firework process, 
$N_\emptyset \sim N$, while in the case of the reverse firework process the number
of stations at the root does not matter as long as it is positive, hence we take a deterministic 
$N_\emptyset=\min\{n \in \N^*\colon \pr(N=n)>0\}$ (the minimum positive value of $N$).
In both cases the support of the law of $N_\emptyset$ is a subset of the support
of $N$. % and Lemma~\ref{lem:BRWihenrited} applies.
The environment is a random labelled GW-tree $\tau^L$ defined in Section~\ref{subsec:genericlabelledtrees} where 
the label of each vertex $w$ is the number of stations at $w$.
As in Section~\ref{subsec:genericlabelledtrees}, the law of $\tau^L$ is 
denoted by $\mu$ and $\pr$ is the probability measure on $\Omega=\mathbb{LT} \times \mathcal{O}$.
Remember that $\pr(\tau^L \textrm{ is finite})=\pr(\tau \textrm{ is finite})=\mu(E)=\mu_j(E)=\alpha<1$ for all $j \in \N$.
The radii $\{R_{w,i}\}_{w \in \mathcal{W}, i \in \N^*}$ of the stations are 
independent and identically distributed (with distribution $R$).

Two interesting particular cases are when there is one station per site ($N=1$ a.s.) and when $N$
is a Bernoulli variable. The first case can be easily retrieved from the general results by taking 
$G_N(t) \equiv t$
(see the comments along this section), while the second case is discussed in Example~\ref{exmp:bernoulli}.

The strategy is to study the annealed counterpart of the process on a GW-tree, with
one station per site and radii
$\widetilde R_w=\ident_{N_w \ge 1}\max\{R_{w,i}\colon i=1, \ldots, N_w\}$ (see Section~\ref{subsec:annealed}). We prove that under suitable conditions
the probability of survival of the annealed counterpart is $0$ (resp.~$>0$). This implies that the annealed probability of
survival of the original process is $0$ (resp.~$>0$): quenched results then follow as we explain in the following lemma
(remember the definition of $\mu$, $\overline \mu$ as the laws of $\tau^L$ and $\tau$ respectively and recall
that $l(\Upsilon)$ is the number of stations at the root of $\Upsilon \in \mathbb{LT}$).

\begin{lem}\label{lem:BRWihenrited2}
Consider a homogeneous firework process or a homogeneous reverse firework process.
 \begin{enumerate}
  \item 
If $\pr(\textrm{survival})=0$ then %there is a.s.~extinction for almost every realization of the environment
$\pr(\textrm{survival}| \tau^L=\Upsilon)=0$
for almost every $\Upsilon \in \mathbb{LT}$
(that is, for almost every tree and every sequence of stations).
 \item
If $\pr(\textrm{survival}|\tau \textrm{ is infinite})=1$ then 
$\pr(\textrm{survival}| \tau^L=\Upsilon)=1$
for almost every infinite $\Upsilon \in \mathbb{LT}$.
\item
If $\pr(\textrm{survival})>0$ and $\pr(N =0) =0$ then 
$\pr(\textrm{survival}| \tau^L=\Upsilon)>0$
for almost every infinite $\Upsilon \in \mathbb{LT}$ such that $l(\Upsilon) \ge 1$
(that is, for almost every realization of the environment such that the underlying tree is infinite and there is
at least one station at the root). 
\item
If $\pr(\textrm{survival})>0$ and $\pr(N =0) >0$ then 
$\pr(\textrm{survival}| \tau=\mathbf{T}, N_\emptyset=n)>0$
for almost every  $(\mathbf{T}, n) \in \mathbb{T} \times \N$
such that the tree $\mathbb{T}$ is infinite and $n \ge 1$.
 \end{enumerate}
Moreover,
$\mu(\Upsilon \colon \pr(\textrm{survival}| \tau^L=\Upsilon)>0)$ %=(1-\alpha) \pr({N_\emptyset}\ge 1)$
% $\pr(\pr(\textrm{survival}| \tau^L)>0)$ (risp.~$ \pr(\pr(\textrm{survival}| \tau, N_\emptyset)>0)$)
and $\overline\mu\times\pr_{N_\emptyset}((\mathbf T,n)\colon \pr(\textrm{survival}| \tau=\mathbf T,N_\emptyset=n)>0)$
are both either $0$ or $(1-\alpha)\pr({N_\emptyset}\ge 1)$.
\end{lem}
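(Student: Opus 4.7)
The plan is to handle the four parts in sequence: Parts~1 and~2 are routine disintegrations, Part~3 is a clean application of Lemma~\ref{lem:BRWihenrited}, Part~4 requires adapting that lemma to accommodate type-$0$ subtrees, and the ``moreover'' clause follows from a short computation. For Parts~1 and~2, since $\pr(V \mid \tau^L=\Upsilon)\in[0,1]$, the identity $\pr(V)=\int \pr(V \mid \tau^L=\Upsilon)\,\mu(\diff\Upsilon)$ from~\eqref{eq:disintegration} forces the integrand to vanish $\mu$-a.s.\ whenever $\pr(V)=0$, and the analogous integral restricted to $\{\tau\text{ infinite}\}$ equals one only if the integrand equals one a.s.\ on that event.

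For Part~3 I will apply Lemma~\ref{lem:BRWihenrited} to the pair $(B,B)$ where
\[
 B:=\{\Upsilon\in\mathbb{LT}\colon \pr(V\mid \tau^L=\Upsilon)=0\}, \qquad \overline J:=J_N\subseteq\N^*
\]
(the last inclusion since $\pr(N=0)=0$). Condition~(1) of Definition~\ref{def:inherited} is immediate ($E\subseteq B$, as finite trees are quenched-extinct). For condition~(2), the key step is the coupling inequality
\[
 \pr(V\mid \tau^L=\Upsilon)\;\geq\;\pr(\widetilde R_\emptyset\geq 1)\cdot \pr(V\mid \tau^L=\Upsilon_w)
\]
for any child $w$ of the root: conditional on $\{\widetilde R_\emptyset\geq 1\}$ (which has positive probability because $l(\Upsilon)\geq 1$ and $\pr(R\geq 1)>0$) the signal activates $w$, and the propagation inside $\Upsilon_w$ is an independent copy of the firework starting from $w$. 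Hence $\Upsilon\in B$ forces $\Upsilon_w\in B$. Lemma~\ref{lem:BRWihenrited} gives the dichotomy; the alternative ``$\mu_j(B)=1$ for all $j\in J_N$'' would force $\pr(V)=0$, contradicting the hypothesis (since $\mathrm{supp}(\pr_{N_\emptyset})\subseteq J_N$). So the first alternative holds and the ``moreover'' clause of Lemma~\ref{lem:BRWihenrited} applied to $\varsigma=\pr_{N_\emptyset}$ yields Part~3.

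Part~4 is the main obstacle: now $0\in J_N$ must lie in $\overline J$, and the pair $(B,B)$ fails condition~(2) because an infinite labelled tree with $l(\Upsilon)=0$ lies trivially in $B$ yet can have subtrees outside~$B$. My strategy is to work with the pair $A:=E\cup(\{l\geq 1\}\cap B)$ and $\widetilde A:=B$: condition~(2) goes through as in Part~3 (type-$0$ subtrees lie in $\widetilde A$ trivially, type-$\geq 1$ ones inherit extinction via the coupling), while condition~(1) holds at every $j\in J_N\cap \N^*$ but fails at $j=0$ because $\mu_0(A\triangle \widetilde A)=\mu_0(\{l=0,\text{infinite}\})=1-\alpha$. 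I will adapt the proof of Lemma~\ref{lem:BRWihenrited} by running the fixed-point recursion only from labels $n\geq 1$: each subtree step either lands on a type-$\geq 1$ vertex (controlled by the inherited pair and the coupling) or on a type-$0$ vertex, whose contribution is absorbed harmlessly into $\mu_0(\widetilde A)=\mu_0(B)=1$. This yields $\mu_n(A\triangle E)=0$ for every $n\geq 1$ with $\pr(N_\emptyset=n)>0$, and disintegration over $(\tau,N_\emptyset)$ then gives Part~4. The hardest point is verifying that the adapted recursion produces the same two-alternative conclusion; I expect this to require a careful rewriting of the proof of Lemma~\ref{lem:BRWihenrited} rather than a black-box application.

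Finally, the ``moreover'' clause follows from the independence of $l(\tau^L)$ and $\tau$ under $\mu$, which gives
\[
 \mu(\{\Upsilon\colon \pr(V\mid\tau^L=\Upsilon)>0\})\;\leq\;(1-\alpha)\pr(N_\emptyset\geq 1),
\]
with equality iff positive quenched survival holds on $\mu$-a.e.\ infinite $\Upsilon$ with $l(\Upsilon)\geq 1$; Parts~3--4 deliver this equality whenever $\pr(V)>0$, and the corresponding statement for $\overline\mu\times\pr_{N_\emptyset}$ follows by Fubini applied to the joint law $(\tau,N_\emptyset,\{N_w\}_{w\neq\emptyset})$.
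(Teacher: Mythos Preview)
Your treatment of Parts~1--3 is essentially the paper's argument. One cosmetic point: the paper uses the pair $(A,O)$ with $A=O\cap\{l\ge1\}$ and $\widetilde A=O$ rather than your $(B,B)$, so that condition~(2) of Definition~\ref{def:inherited} holds as a genuine set inclusion (your coupling inequality needs $l(\Upsilon)\ge1$ and fails for $\Upsilon\in\mathbb{LT}_0\subseteq B$); since $\mu_j(\mathbb{LT}_0)=0$ for every $j\in\overline J\subseteq\N^*$ this does not affect the computation in the proof of Lemma~\ref{lem:BRWihenrited}, but it is worth fixing.

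Your Part~4, however, is aimed at the wrong target. You try to establish $\mu_n(A\triangle E)=0$ for $n\ge1$, which (since under $\mu_n$ one has $A=B$ a.s.) is the Part-3-type statement that $q(\Upsilon)<1$ for $\mu$-a.e.\ infinite \emph{labelled} tree with $l(\Upsilon)\ge1$. But Example~\ref{exmp:counterexample} shows this is \emph{false} when $\pr(N=0)>0$: if $R\le M$ a.s., the set of infinite labelled trees with $l(\Upsilon)\ge1$ and all labels at distances $1,\dots,M$ equal to $0$ has positive $\mu$-measure, and on every such tree the process dies out almost surely. Hence no adaptation of the fixed-point recursion can produce $\mu_n(B)=\alpha$; at best your inequality $\bar h\le p_0+(1-p_0)\varphi(\bar h)$ (with $p_0=\pr(N=0)$) places $\bar h$ below the smallest fixed point of $x\mapsto p_0+(1-p_0)\varphi(x)$, which is strictly larger than $\alpha$.

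The actual claim of Part~4 is strictly weaker than what you are pursuing: it concerns $q_1(\mathbf T,n)=\pr(\textrm{extinction}\mid\tau=\mathbf T,\,N_\emptyset=n)$, where only the unlabelled tree and the root label are frozen while the labels $\{N_w\}_{w\ne\emptyset}$ remain random. The paper exploits this by passing to the annealed counterpart $\widetilde\eta$ (one station per vertex, radius $\widetilde R_w=\max_{i\le N_w}R_{w,i}$). For $\widetilde\eta$ one has $\pr(N=0)=0$, so Part~3 applies and yields $\{q_2(\tau)<1\}=\{\tau\text{ infinite}\}$ a.s., where $q_2(\mathbf T)$ is the extinction probability of $\widetilde\eta$ on $\mathbf T$. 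Since $q_2(\mathbf T)$ coincides with the extinction probability of the original process conditioned only on $\tau=\mathbf T$, the identity $1-q_2(\mathbf T)=\sum_{n}(1-q_1(\mathbf T,n))\pr_{N_\emptyset}(n)$ together with the comparison $1-q_1(\mathbf T,1)\le 1-q_1(\mathbf T,n)\le n\,(1-q_1(\mathbf T,1))$ for $n\ge1$ (an equality for the reverse firework) gives $q_2(\mathbf T)<1\iff q_1(\mathbf T,n)<1$ for all $n\ge1$, and Part~4 follows.
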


% 
% \begin{rem}\label{rem:BRWihenrited}
% 
% 
% \end{rem}
The third case of the previous lemma applies, for instance, to any annealed counterpart
of a process (since in the annealed counterpart there is one station per vertex). 
%\textcolor{red}{using the monotonicity by coupling above}).

Note the difference between ``$\pr(\mathrm{survival}|\tau^L=\Upsilon)>0$ 
for almost every $\Upsilon \in \mathbb{LT}$ such that
the underlying tree is infinite and $l(\Upsilon) \ge 1$'' and ``$\pr(\mathrm{survival}|\tau=\mathbf{T}, N_\emptyset =n)>0$ for almost every infinite tree $\mathbf{T}\in \mathbb{T}$
and for $\pr_{N_\emptyset}$-almost all $n \ge 1$''. 
In both cases there is at least one station at the root, but
the second assertion is
weaker than the first one
since in the former the tree and the number of stations at each vertex are fixed while in
the latter just the tree and the number of stations at the root are fixed. 
Indeed we show that, when $\pr(N=0)>0$, the second assertion may hold while the first  does not 
 (see Example~\ref{exmp:counterexample}).

% \textcolor{red}{Commentare la differenza tra condizionare al labelled tree e condizionare solo al tree+label of the root. Cambiare
% tutti i risultati come i Teoremi~\ref{th:GW-Homogeneous-Firework2} e \ref{th:GW-Homogeneous-reverse-Firework2}. Mettere i controesempi
% alla fine. Forse usare ``mixed radius'' al posto di ``annealed radius''.}

Finally, since there is no survival if there are no stations at the root or the tree is finite, then
the probability that the environment can sustain a surviving process cannot exceed the probability of
the event ``the tree is infinite and there are stations at the root'', that is,
$(1-\alpha)\pr({N_\emptyset}\ge 1)$. Lemma~\ref{lem:BRWihenrited2} tells us that
when the probability that the environment can sustain a surviving process is not $0$ then it
attains the maximum admissible value.

\begin{teo}\label{th:GW-Homogeneous-Firework2}
% Suppose that $\lambda>1$ and 
Consider a homogeneous firework process. Define 
$\Phi(t):=G_N(\pr(R<1))+\sum_{n=1}^\infty (G_N(\pr(R<n+1))-G_N(\pr(R<n))) t^n \in [0, +\infty]$ ($t \in [0,+\infty)$).
\begin{enumerate}
 \item If 
$\Phi(m)-1>\Phi(0)=G_N(\pr(R<1))$ and $\pr(N=0)=0$ then for the firework process there is survival with positive probability
 for almost every realization of the environment such that the underlying tree is infinite
 and there is at least one station at the root.
 \item If 
$\Phi(m)-1>\Phi(0)=G_N(\pr(R<1))$ and $\pr(N=0)>0$ then for the firework process 
$\pr(\mathrm{survival}|\tau=\mathbf{T}, N_\emptyset =n)>0$ for almost every $(\mathbf{T},n) \in \mathbb{T} \times \N$ such that
$\mathbf{T}$ is an infinite (unlabelled) tree and $n \ge 1$.
\item
 If the GW-tree is $k$-bounded and 
$\Phi(k)-1\le 1-1/k$
%$\sum_{n=1}^\infty (G_N(\pr(R<n+1))-G_N(\pr(R<n))) (m^{n+1}-m)/(m-1) \le 1$ 
then the firework process becomes
extinct a.s.~ for almost every %infinite 
realization of the environment.
\end{enumerate}
\end{teo}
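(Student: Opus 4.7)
The plan is to reduce the theorem to a question about the annealed counterpart of the firework process---since the F-graph depends only on the annealed radii $\widetilde R_w$, the survival events for the original and annealed processes coincide---and then to convert annealed survival/extinction of the F-graph into quenched statements by invoking Lemma~\ref{lem:BRWihenrited2}. Concretely, I would establish $\pr(V)>0$ under the hypothesis of (1) and (2), and invoke Lemma~\ref{lem:BRWihenrited2}(3) or (4) according to whether $\pr(N=0)=0$ or $\pr(N=0)>0$; for (3) I would establish $\pr(V)=0$ and conclude with Lemma~\ref{lem:BRWihenrited2}(1).

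For (1) and (2) my approach is to couple a single-type Galton--Watson process inside the F-graph whose supercriticality forces $\pr(V)>0$. Starting from the root, I take as offspring of a vertex $w$ the collection of vertices in the GW subtree of $w$ at exact tree-distance $\widetilde R_w$ (and the empty set when $\widetilde R_w=0$); these are all F-graph-descendants of $w$, so survival of this ``frontier'' process implies that the F-graph is infinite. By independence of the GW subtrees and of the radii the process is a genuine Galton--Watson process, and its mean offspring number is
\[
\E\bigl[Z_{\widetilde R}\,\ident_{\{\widetilde R\ge 1\}}\bigr]
=\sum_{j\ge 1}\pr(\widetilde R=j)\,m^{j}=\Phi(m)-\Phi(0),
\]
where $Z_{j}$ denotes the size of the $j$-th generation of the GW process. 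The hypothesis $\Phi(m)-1>\Phi(0)$ is exactly supercriticality of this process, so $\pr(V)>0$; Lemma~\ref{lem:BRWihenrited2}(3) then delivers (1) and Lemma~\ref{lem:BRWihenrited2}(4) delivers (2).

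For (3) I would couple the $k$-bounded GW tree with the regular $k$-ary tree $\mathbb{T}_k$: the GW tree embeds isometrically into $\mathbb{T}_k$, and after extending the radii by independent copies of $\widetilde R$ at the additional vertices, the F-graph on the GW tree is a subgraph of the F-graph on $\mathbb{T}_k$. It therefore suffices to prove extinction on $\mathbb{T}_k$. Let $N_n$ be the number of paths of F-graph-length $n$ from the root on $\mathbb{T}_k$, i.e.\ sequences $\emptyset=w_0,w_1,\ldots,w_n$ with each $w_{i+1}$ a strict descendant of $w_i$ satisfying $d(w_i,w_{i+1})\le\widetilde R_{w_i}$. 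Using independence of the radii at distinct vertices and the fact that $\mathbb{T}_k$ has exactly $k^d$ descendants at tree-distance $d$, the expected number of such paths factorises as
\[
\E[N_n]=\Bigl(\sum_{d\ge 1}k^d\,\pr(\widetilde R\ge d)\Bigr)^n=\Bigl(\frac{k(\Phi(k)-1)}{k-1}\Bigr)^n.
\]
The hypothesis $\Phi(k)-1\le 1-1/k$ says exactly that the ratio inside the parentheses is at most $1$. Under strict inequality, $\E[N_n]\to 0$; since any infinite realisation of the locally finite F-graph contains paths of every length, $V\subseteq\{N_n\ge 1\text{ for all }n\}$, so Markov yields $\pr(V)\le\E[N_n]\to 0$, and Lemma~\ref{lem:BRWihenrited2}(1) delivers the quenched extinction. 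The delicate point will be the borderline case of equality, where $\E[N_n]\equiv 1$, $N_n$ is a nonnegative integer-valued martingale converging a.s.\ to some $N_\infty$ with $\E[N_\infty]\le 1$, and one must argue separately that $N_\infty=0$ a.s., for instance by analysing the fixed-point equation $q(r)=\bigl(\E[q(\max(r-1,\widetilde R))]\bigr)^k$ for the extinction probability as a function of the incoming reach, in the spirit of the critical Galton--Watson theorem.
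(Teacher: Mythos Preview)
Your approach to (1) and (2) is essentially the paper's: dominate from below by the Galton--Watson process whose offspring are the vertices at exact distance $\lfloor\widetilde R_w\rfloor$, compute the mean $\Phi(m)-\Phi(0)$, and then feed $\pr(V)>0$ into Lemma~\ref{lem:BRWihenrited2}(3) or (4). That part is fine.

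For (3) your path-counting argument is correct when $\Phi(k)-1<1-1/k$, and the mean you compute coincides with the paper's. The gap is the critical case $\Phi(k)-1=1-1/k$. Your claim that $N_n$ is a martingale is not justified and in fact fails: distinct length-$n$ paths may share their terminal vertex $w_n$, and worse, $w_n$ may already appear as an \emph{intermediate} vertex of another length-$n$ path (e.g.\ $\emptyset\to w_n$ of length~$1$ versus $\emptyset\to w\to w_n$ of length~$2$, both counted in $N_2$ when $n=2$). In that situation $\widetilde R_{w_n}$ is already revealed by your natural filtration, so $\E[N_{n+1}\mid\mathcal F_n]$ is not $cN_n$; the conditional number of extensions from such a $w_n$ is a deterministic functional of $\widetilde R_{w_n}$, not the constant $c$. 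The fixed-point recursion you sketch for $q(r)$ is not a substitute: you would need to show that the only $[0,1]$-valued solution of a certain functional equation with a boundary condition is identically~$1$, and this is not the classical critical-GW argument but a genuinely new monotone-operator problem you have not set up.

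The paper's route for (3) bypasses this difficulty by replacing path-counting with a \emph{spanning tree} of the F-graph on $\mathbb T_k$: connect each newly reached vertex to a single already-active ancestor. This ``blue tree'' has the same vertex set as the F-graph, and its offspring numbers are stochastically dominated by those of a single-type GW process whose mean is exactly your $\sum_{d\ge1}k^d\pr(\widetilde R\ge d)$. Since that process has positive probability of zero offspring (because $\pr(\widetilde R<1)>0$), mean $\le1$ forces a.s.\ extinction by the classical theorem, covering the critical case for free. If you want to keep your first-moment approach, you could instead count not all F-paths but one canonical path per reached vertex (say via the lexicographically least active ancestor); that recovers the same domination and closes the gap.
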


It is clear, from the previous theorem, that the radius of convergence of $\Phi$ will play an important role.
% Note that %if $\Phi(m)-1\le 1-1/m$ for some $m$ then $\Phi(m^\prime)-1\le 1-1/m^\prime$ for all
% % $m^\prime \le m$. This follows from the equivalence between
% % $\Phi(m)-1\le 1-1/m$ and $\sum_{i=1}^\infty (1-G_N(\pr(R<i))) m^i \le 1$.
% % In order to prove the equivalence observe that $\Phi(1)=1$ and 
% \[
% \begin{split}
% \Phi(m)-1&=(m-1)\sum_{n=1}^\infty (G_N(\pr(R<n+1))-G_N(\pr(R<n))) \sum_{i=0}^{n-1} m^{i}\\
% &
% =(m-1)\sum_{i=1}^\infty (1-G_N(\pr(R<i))) m^{i-1},
% \end{split}
% \]
% hence 
Elementary computations show that $\limsup_{n \to \infty} \sqrt[n]{G_N(\pr(R < n+1))-G_N(\pr(R < n))}= \\
\limsup_{n \to \infty} \sqrt[n]{1-G_N(\pr(R < n))}$.
%
% More generally, if $a_n \ge 0$ for all $n$ and $\sum a_n <+\infty$ 
% then $\limsup_n \sqrt[n]{a_n}=\limsup_n \sqrt[n]{\sum_{i \ge n} a_i}$. Clearly $\le$ holds. 
% Since  $\limsup_n \sqrt[n]{\sum_{i \ge n} a_i}\le 1$ if $\limsup_n \sqrt[n]{a_n}=1$ there is nothing to prove.
% Otherwise take $r \in (\limsup_n \sqrt[n]{a_n},1)$; $a_n \le r^n$ eventually as $n \to \infty$ thus
% $\sum_{i \ge n} a_i \le r^n/(1-r)$ which implies $\limsup_n \sqrt[n]{\sum_{i \ge n} a_i} \le r/ \liminf_n \sqrt[n]{1-r}=r$.
%
% 
% Moreover, by Theorem~\ref{th:GW-Homogeneous-Firework2}, 
% given $\rho$, if $G_N(\pr(R<1)) \le 1-1/m$ then
% we can always distinguish between extinction and survival. Indeed, one of the two conditions on $\Phi$ of the previous theorem
% is always satisfied. Finally, 
Moreover, in the case $N=1$ a.s., %the generating function $\Phi$ satisfies
$\Phi(t):=\E[t^{\lfloor R\rfloor}]=
\sum_{n=0}^\infty \pr(n \le R < n+1) t^n$.

\begin{cor}\label{cor:GW-Homogeneous-Firework}
%Let $N=1$ almost surely and fix 
%Let us fix the probability distribution of $R$. 
Define % $\underline m_c:=\inf\{m >0 \colon \textrm{ the firework process survives}\}$ and
$\overline m_c:=\sup\{m >0 \colon \textrm{ the firework process dies out a.s.}\}$. 
% Then 
% $1 %\le \underline m_c 
% \le \overline m_c <+\infty$; moreover  
% $m > \overline m_c$ implies 
% survival with positive probability on almost every infinite (unlabelled) tree
%  and there is at least one station at the root.
% % survival with positive probability for almost every realization of the environment such that the underlying tree is infinite
% %  and there is at least one station at the root.
% %while $m<\underline m_c$ implies a.s.~ extinction for almost every realization of the environment.
% In particular w
We have that \break
%  \begin{enumerate}
%   \item 
$\limsup_{n \to \infty} \sqrt[n]{1-G_N(\pr(R < n))}=1$ implies $%\underline m_c = 
\overline m_c= 1$.
% ;
% \item
% if $\limsup_{n \to \infty} \sqrt[n]{\pr(n \le R < n+1)}<1$ 
% and $\E[\lfloor R \rfloor] < 1 $ then $\underline m_c >1$.
%  \end{enumerate}
\end{cor}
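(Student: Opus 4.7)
The plan is to combine the two branches of Theorem~\ref{th:GW-Homogeneous-Firework2} with a standard radius-of-convergence argument for the power series $\Phi$. For the easy direction $\overline m_c \ge 1$, I would observe that whenever $m \le 1$ (recalling that $\rho(1)<1$ is in force), classical Galton--Watson theory gives $\alpha=1$, hence $\tau$ is almost surely finite and the firework process becomes extinct a.s.; so the set $\{m>0\colon\text{extinction a.s.}\}$ contains $(0,1]$.

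For the matching bound $\overline m_c \le 1$, the hypothesis pins down the radius of convergence of $\Phi$. The coefficients $c_0 = G_N(\pr(R<1))$ and $c_n = G_N(\pr(R<n+1)) - G_N(\pr(R<n))$ for $n \ge 1$ are nonnegative, and by the identity recorded in the paper just before the corollary,
$$\limsup_{n\to\infty}\sqrt[n]{c_n}=\limsup_{n\to\infty}\sqrt[n]{1-G_N(\pr(R<n))}=1,$$
so the Cauchy--Hadamard formula gives that $\Phi$ has radius of convergence exactly $1$. Since the coefficients are nonnegative, this forces $\Phi(m)=+\infty$ for every $m>1$, and in particular the inequality $\Phi(m)-1>G_N(\pr(R<1))=\Phi(0)$ is then automatic.

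Fixing any $m>1$, the hypothesis of Theorem~\ref{th:GW-Homogeneous-Firework2}(1) (when $\pr(N=0)=0$) or of Theorem~\ref{th:GW-Homogeneous-Firework2}(2) (when $\pr(N=0)>0$) is therefore met, and either branch implies $\pr(V)>0$, so extinction is not almost sure. Hence the extinction set is contained in $(0,1]$, and combining the two bounds yields $\overline m_c=1$. There is no serious technical obstacle: the only real content is the observation that the limsup hypothesis pins the radius of convergence of $\Phi$ at exactly $1$, and the only mild care needed is to split into the two cases $\pr(N=0)=0$ versus $\pr(N=0)>0$ in order to cite the appropriate branch of Theorem~\ref{th:GW-Homogeneous-Firework2}.
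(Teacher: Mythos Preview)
Your proof is correct and follows essentially the same route as the paper's: both use the almost-sure finiteness of the GW-tree for $m\le 1$ to get $\overline m_c\ge 1$, then identify $1/\limsup_n \sqrt[n]{1-G_N(\pr(R<n))}$ as the radius of convergence of $\Phi$ so that the hypothesis forces $\Phi(m)=+\infty>\Phi(0)+1$ for every $m>1$, and finally invoke Theorem~\ref{th:GW-Homogeneous-Firework2} to obtain survival with positive probability. The paper does not bother to split into the two cases $\pr(N=0)=0$ and $\pr(N=0)>0$, simply citing Theorem~\ref{th:GW-Homogeneous-Firework2} without specifying the part, but your extra care there is harmless.
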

Consider the case $N=1$ almost surely. In this case, 
% the above limit is simply 
% $\limsup_{n \to \infty} \sqrt[n]{\pr(n \le R < n+1)}=1$, 
%for instance, 
if $\E[R^a]=+\infty$ for some $a >0$ we have $m_c=1$. An example where $\E[R]<+\infty$ and
$\overline m_c=1$ is given by any law $R$ such that
$\pr(n \le R < n+1) \asymp n^{-\alpha}$ ($\alpha >2$).

The next result deals with the behavior of the reverse homogeneous firework process.
By definition, $\prod_{j=1}^0 \alpha_n:=1$ for every sequence $\{\alpha_n\}_{n \in \N}$.

\begin{teo}\label{th:GW-Homogeneous-reverse-Firework2}
% Suppose that $m \lambda>1$ and 
Consider a homogeneous reverse firework process. 
Let $\phi_1(m):=\sum_{n=1}^\infty (1-G_N(\pr(R < n))){m^n}$ and 
$\phi_2(m):=\sum_{i=1}^\infty (1-G_N(\pr(R < i))) m^i \prod_{j=1}^{i-1} G_N(\pr(R<j))$.
The following hold
\begin{enumerate}
\item if 
%$\lim_{n\to \infty} \Big ((m-1)\sum_{i=0}^n \prod_{j=1}^i (m\pr(R<j))
%- \prod_{j=1}^{n+1} (m\pr(R<j)) \Big ) > m\pr(R<1)$ 
%$\sum_{i=1}^\infty m^i \pr(R \ge i) \prod_{j=1}^{i-1} \pr(R<j) \ge 1$ and 
$\phi_1(m)=+\infty$ then there is survival with probability $1$ for the reverse firework process
for almost all realizations of the environment such that the underlying tree is infinite;
\item if $\pr(N =0)=0$,
%$\lim_{n\to \infty} \Big ((m-1)\sum_{i=0}^n \prod_{j=1}^i (m\pr(R<j))
%- \prod_{j=1}^{n+1} (m\pr(R<j)) \Big ) > m\pr(R<1)$ 
$\phi_1(m)<+\infty$ and 
$\phi_2(m) >1$  
then there is survival with positive probability (strictly smaller than $1$) for the reverse firework process
for almost all realizations of the environment such that the underlying tree is infinite;
\item if $\pr(N =0)>0$,
%$\lim_{n\to \infty} \Big ((m-1)\sum_{i=0}^n \prod_{j=1}^i (m\pr(R<j))
%- \prod_{j=1}^{n+1} (m\pr(R<j)) \Big ) > m\pr(R<1)$ 
$\phi_1(m)<+\infty$ and 
$\phi_2(m) >1$  
then 
$\pr(\mathrm{survival}|\tau=\mathbf{T} %, N_\emptyset \ge 1
)\in(0,1)$ for almost every infinite (unlabelled) tree 
$\mathbf{T}\in \mathbb{T}$;
 \item if $\phi_1(m)<+\infty$ and 
%$\lim_{n\to \infty} \Big ((m-1)\sum_{i=0}^n \prod_{j=1}^i (m\pr(R<j))
%- \prod_{j=1}^{n+1} (m\pr(R<j)) \Big ) \le m\pr(R<1)$
$\phi_2(m) \le 1$
then there is a.s.~extinction for the reverse firework process
for almost all %infinite 
realizations of the environment;
\end{enumerate}

\end{teo}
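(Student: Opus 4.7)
The strategy is to pass to the annealed counterpart of the process (Section~\ref{subsec:annealed})---a reverse firework process on the GW-tree with one station per vertex and i.i.d.\ effective radii $\widetilde R_w$ of c.d.f.~$G_N(\pr(R<\,\cdot\,))$---and to transfer annealed conclusions to quenched form via Lemma~\ref{lem:BRWihenrited2}. The analysis then reduces to deciding when the RF-graph is finite or infinite.

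The key structural observation is that the RF-graph carries a Galton-Watson structure. Call a descendant $w$ of an RF-vertex $v$ a \emph{first-level RF-descendant of $v$} if $w\in\mathrm{RF}$ and the tree-path from $v$ to $w$ meets no other RF-vertex. By the Markov property of the GW-tree and the independence of the radii, conditional on $v\in\mathrm{RF}$ the family of first-level RF-descendants of $v$ (together with their subtrees) has the same law as the corresponding family for the root of an independent copy of the environment; hence the RF-graph is infinite iff this induced Galton-Watson process survives. A direct calculation using $\pr(\widetilde R<t)=G_N(\pr(R<t))$, linearity of expectation and $\E[Z_i]=m^i$ gives the mean offspring as
\[
\sum_{i=1}^\infty m^i(1-G_N(\pr(R<i)))\prod_{j=1}^{i-1}G_N(\pr(R<j))=\phi_2(m).
\]
Parts (2)--(4) then follow from classical Galton-Watson theory. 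If $\phi_2(m)\le 1$, the induced process dies out a.s., so $\pr(\textrm{survival})=0$ and Lemma~\ref{lem:BRWihenrited2}(1) yields (4). If $\phi_2(m)>1$ and $\phi_1(m)<+\infty$, the induced process has positive survival probability, and this probability is strictly less than $1$: indeed the probability that the root has no first-level RF-descendant equals $\E[\exp(\sum_n Z_n\log G_N(\pr(R<n)))]$, which by Jensen's inequality is bounded below by $\exp(-C\phi_1(m))>0$. Lemma~\ref{lem:BRWihenrited2}(3) when $\pr(N=0)=0$ and Lemma~\ref{lem:BRWihenrited2}(4) when $\pr(N=0)>0$ then supply the lower bound $\pr(\textrm{survival}|\,\cdot\,)>0$ in (2)--(3), while the upper bound $<1$ follows pathwise from the same product estimate: since $\E[\sum_n Z_n(1-G_N(\pr(R<n)))]=\phi_1(m)<+\infty$, this sum is a.s.\ finite, making the infinite product a.s.\ positive for $\mu$-almost every environment.

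For part (1), $\phi_1(m)=+\infty$, the plan is to show that conditional on $\{\tau\text{ infinite}\}$ the root a.s.\ has infinitely many \emph{direct} RF-descendants (vertices $v$ with $\widetilde R_v\ge d(o,v)$), which immediately forces the RF-graph to be infinite. Conditional on the tree, these events are independent Bernoullis with total conditional mean $\sum_n Z_n(1-G_N(\pr(R<n)))$; a Seneta-Heyde-type lower bound $Z_n\ge c_n W$ with $c_{n+1}/c_n\to m$ on $\{\tau\text{ infinite}\}$, together with $\phi_1(m)=+\infty$, forces this sum to diverge a.s., and a conditional second Borel-Cantelli then produces infinitely many direct RF-descendants. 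Lemma~\ref{lem:BRWihenrited2}(2) transfers the conclusion to quenched form. The hard part is precisely this last deduction---ensuring divergence of $\sum_n Z_n(1-G_N(\pr(R<n)))$ on $\{\tau\text{ infinite}\}$ from $\phi_1(m)=+\infty$ without imposing an $L\log L$ moment condition on $\rho$---which requires a careful Seneta-Heyde comparison; the upper-bound argument in (2)--(3) and the Jensen step described above are more routine bookkeeping.
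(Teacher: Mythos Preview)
Your approach is essentially the paper's: reduce to the annealed one-station process, exhibit the induced (``purple'') Galton--Watson tree of first-level RF-descendants with mean offspring $\phi_2(m)$, and transfer annealed conclusions to quenched ones via Lemma~\ref{lem:BRWihenrited2}. Your route to (1) through infinitely many \emph{direct} RF-descendants is a slightly cleaner packaging than the paper's ``root has a purple child'' formulation, and you are right to flag the deduction $\sum_n Z_n a_n=+\infty$ on survival from $\phi_1(m)=+\infty$ as the delicate step---the paper handles it by asserting $Z_n\sim m^n$ on nonextinction, which is exactly the Seneta--Heyde/Kesten--Stigum issue you identify; likewise your expectation argument $\E[\sum_n Z_n a_n]=\phi_1(m)<+\infty$ for the upper bound in (2)--(3) avoids that appeal and is marginally tidier than the paper's use of $Z_n\sim m^n$ for the same conclusion.
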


We note that
% $\sum_{i=1}^\infty m^i (1-G_N(\pr(R < i))) \prod_{j=1}^{i-1} G_N(\pr(R<j)) > 1$
% if and only if 
% $\lim_{n\to \infty} \Big ((m-1)\sum_{i=0}^n \prod_{j=1}^i (mG_N(\pr(R<j)))
% - \prod_{j=1}^{n+1} (mG_N(\pr(R<j))) \Big ) > m\pr(R<1)$.
when $N=1$ a.s.~then  $\phi_1(m)$ and $\phi_2(m)$ become 
$\sum_{n=1}^\infty {m^n}\pr(R \ge n)$ and
$\sum_{n=1}^\infty m^n \pr(R \ge n) \prod_{j=1}^{n-1} \pr(R<j)$ respectively.

\begin{cor}\label{cor:GW-Homogeneous-reverse-Firework2}
Define $M_c:=1/\limsup_{n \to \infty} \sqrt[n]{1-G_N(\pr(R < n))}$.
There exists a critical value $m_c \in [1, +\infty)$, 
$m_c \le M_c$
such that 
\begin{enumerate}
 \item 
$m <m_c$
implies a.s.~extinction 
for almost all %infinite 
realizations of the environment;
\item
$m_c<m<M_c$ and $\pr(N=0) =0$ implies survival with positive probability 
for almost all realizations of the environment such that the underlying tree is infinite;
\item
$m_c<m<M_c$ and $\pr(N=0) >0$  implies survival with positive probability 
for almost every infinite (unlabelled) tree;
\item $M_c<m$ implies survival with probability 1 
for almost all realizations of the environment such that the underlying tree is infinite.
\end{enumerate}
Moreover, if $m=m_c<M_c$ then there is a.s.~extinction for almost all %infinite 
realizations of the environment.
\end{cor}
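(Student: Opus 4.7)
I would define the critical threshold
\[
m_c := \inf\{m \ge 0 \colon \phi_1(m) = +\infty \text{ or } \phi_2(m) > 1\}
\]
(with $\inf \emptyset := +\infty$) and then read off the four regimes of the corollary directly from the corresponding items of Theorem~\ref{th:GW-Homogeneous-reverse-Firework2}, using the monotonicity and continuity of the power series $\phi_1, \phi_2$ with non-negative coefficients.

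The first step is to establish the two-sided bound $1 \le m_c \le M_c$. Since $1 - G_N(\pr(R<n)) \in [0,1]$, one immediately has $\limsup_n \sqrt[n]{1-G_N(\pr(R<n))} \le 1$, hence $M_c \ge 1$. Writing $a_j := G_N(\pr(R<j))$, a short induction yields the telescoping identity
\[
\sum_{i=1}^N (1-a_i) \prod_{j=1}^{i-1} a_j = 1 - \prod_{j=1}^N a_j \qquad (N \ge 1),
\]
so $\phi_2(1) = 1 - \prod_{j=1}^\infty a_j \le 1$. Because $\phi_1$ is finite on $[0,M_c) \supseteq [0,1)$ and both power series are non-decreasing on $[0,+\infty)$, we obtain $\phi_2(m) \le 1$ and $\phi_1(m) < +\infty$ throughout $[0,1)$, and hence $m_c \ge 1$. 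For the upper bound, if $M_c < +\infty$ then $(M_c,+\infty) \subseteq \{\phi_1 = +\infty\}$, so $m_c \le M_c$; if $M_c = +\infty$ then the standing assumptions $\pr(R<1) \in (0,1)$ and $\pr(N > 0) > 0$ force the leading coefficient $c_1 = 1 - a_1$ of $\phi_2$ to be strictly positive, so $\phi_2(m) \ge c_1 m \to +\infty$, whence $m_c$ is still finite and $m_c \le M_c$ holds vacuously. Thus $m_c \in [1,+\infty)$.

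The four regimes follow by inspection. For $m < m_c$ we have $m < M_c$ (so $\phi_1(m) < +\infty$) and $\phi_2(m) \le 1$, so item~(4) of Theorem~\ref{th:GW-Homogeneous-reverse-Firework2} gives a.s.~extinction for almost every environment. For $m_c < m < M_c$, $\phi_1(m) < +\infty$ while $\phi_2(m) > 1$ by the defining property of the infimum together with the monotonicity of $\phi_2$; items (2)--(3) of the theorem then produce the stated survival dichotomy according to the sign of $\pr(N=0)$. For $m > M_c$, $\phi_1(m) = +\infty$ and item~(1) of the theorem yields survival with probability one on infinite trees.

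Finally, for the \emph{moreover} clause, suppose $m = m_c < M_c$. Since $c_i \le 1 - a_i$ for every $i$, the radius of convergence of $\phi_2$ is at least that of $\phi_1$, namely $M_c > m_c$, so $\phi_2$ is analytic, hence continuous, at $m_c$. Combined with monotonicity and the definition of the infimum, continuity forces $\phi_2(m_c) = 1$, and item~(4) of Theorem~\ref{th:GW-Homogeneous-reverse-Firework2} once more gives a.s.~extinction. The only substantive computational ingredient in the whole argument is the telescoping identity for $\phi_2(1)$; otherwise the corollary is a bookkeeping translation between the analytic thresholds attached to the series $\phi_1, \phi_2$ and the survival/extinction phases of the process.
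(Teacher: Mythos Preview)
Your proof is correct and follows essentially the same route as the paper's: define $m_c$ in terms of $\phi_1,\phi_2$, use the telescoping identity to get $\phi_2(1)\le 1$ (hence $m_c\ge 1$), use $c_i\le 1-a_i$ and Cauchy--Hadamard to relate both series to $M_c$, and then read off the four regimes from Theorem~\ref{th:GW-Homogeneous-reverse-Firework2}. The only cosmetic difference is that the paper defines $m_c:=\sup\{m:\phi_2(m)\le 1\}$ directly rather than as the infimum of the complementary set, and in the ``moreover'' clause it is content with $\phi_2(m_c)\le 1$ (which already suffices for item~(4) of the theorem) rather than pinning it down to equality as you do.
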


Again the previous corollary applies, in particular, in the case $N=1$ a.s.~by using
$\pr(R \ge n)$ instead of $1-G_N(\pr(R < n)$.

\begin{rem}\label{rem:phasetransition2}
If $\phi_1(1)=+\infty$, % $\sum_{i=1}^\infty  \pr(R \ge i)= +\infty$, 
(that is, $\E[R]=+\infty$ if $N=1$ a.s.)
then $M_c= m_c=1$ and there is survival with probability 1 for
every realization of the environment such that the underlying tree is infinite (when $m>1$). 

If $M_c=+\infty$ then the probability of survival is smaller than $1$ for almost every infinite (unlabelled) tree 
(it might be $0$ of course).

If $M_c \in(1, +\infty]$ and 
$
% \lim_{m \to M^-_c} \sum_{n=1}^\infty  M_c^n (1-G_N(\pr(R < n))) \equiv
\phi_1(M_c)= +\infty$  
then $m_c<M_c$, thus there is a.s.~extinction for almost all %infinite 
realizations of the environment when $m=m_c$ (see the details in Section~\ref{sec:proofs}). 
\end{rem}

Note that according to Theorems~\ref{th:GW-Homogeneous-Firework2} and \ref{th:GW-Homogeneous-reverse-Firework2}
the firework and reverse firework processes on a non-trivial GW-tree can survive even if the radius $R$ is bounded. 
Indeed, if $\pr(N=0)<1-1/m$ and $\pr(R <1)$ is sufficiently small then we have $1-G_N(\pr(R<1)) > 1/m$ and the
conditions for survival given in Theorems~\ref{th:GW-Homogeneous-Firework2} and \ref{th:GW-Homogeneous-reverse-Firework2} hold.
According to Theorems~\ref{thm:2}(1) and \ref{th:reversehomogeneous}(2), this is not possible on $\N$.
As an example we discuss the Bernoulli case.

\begin{exmp}\label{exmp:bernoulli}
 We consider $N \sim B(p)$, a Bernoulli variable with parameter $p\in(0,1)$. In this framework, each vertex
is occupied by one station independently with probability $p$ and unoccupied with probability $1-p$.
Let $R_w \sim R$ for all vertices $w \in \mathcal{W}$.
Clearly $G_N(t)=pt +1-p$. 

Consider the firework process. Applying Theorem~\ref{th:GW-Homogeneous-Firework2}, we have that
\begin{enumerate}
 \item $\sum_{n=1}^\infty \pr(n \le R < n+1) m^n>1/p$ implies survival with positive probability 
 for almost every infinite (unlabelled) tree if there is at least one station at the root;
\item GW-tree $k$-bounded and
$\sum_{n=1}^\infty \pr(n \le R < n+1) k^n - \pr(R \ge 1) \le (k-1)/(kp)$ implies
a.s.~extinction for almost every %infinite 
realization of the environment.
\end{enumerate}
% As for the critical value $\underline m_c,
% \overline m_c$ 
% we have that $\lim_{p \to 0} \underline m_c(p)=
% \lim_{p \to 0} \overline m_c(p)=r$, where $r:=1/ \limsup_{n \to \infty} 
% \sqrt[n]{\pr(n \le R < n+1)}$ is the radius of convergence of the series in (1) and (2).
% Indeed if $r=1$ then $\overline m_c(p)=\underline m_c(p)=1$ for every $p$. Let us assume $r>1$. For every
% $m > r$ the series in (1) diverges, hence
% $\overline m_c(p) \le r$. On the other hand, for every fixed $m \in (1, r)$, as $p \to 0$ eventually the inequality in (2) holds.
% The discussion after Theorem~\ref{th:GW-Homogeneous-Firework2} shows that, given $p$,
% if this inequality  holds for some $m$, then it holds for every $m^\prime \le m$; thus
% $\underline m_c(p) >m$ for every $p$ sufficiently close to $0$. Hence 
% $r \le \liminf_{p \to 0} \underline m_c(p) \le \limsup_{p \to 0} \overline m_c(p) \le r$.

Consider the reverse firework process. According to Theorem~\ref{th:GW-Homogeneous-reverse-Firework2}, there is 
survival with probability $1$ 
(for almost all realizations of the environment such that the underlying tree is infinite)
if and only if $\sum_{n=1}^\infty {m^n}\pr(R \ge n)=+\infty$ (note that this condition does not depend
on $p>0$).
If $\sum_{n=1}^\infty {m^n}\pr(R \ge n)<+\infty$ then the probability of survival is strictly less than 1
for every (unlabelled) tree. In particular 
there is a.s.~extinction 
for almost all %infinite 
realizations of the environment if and only if $\sum_{n=1}^\infty m^n \pr(R \ge n) \prod_{j=1}^{n-1} (1-p\pr(R \ge j)) \le 1/p$. 
We observe that $M_c$ does not depend on $p$.
As for the critical value $m_c= m_c(p)$ it is not difficult to show
that
$p \mapsto m_c(p)$ is
nonincreasing %coupling
and continuous. %Bounded Convergence Theorem and Implicit Function Theorem
In particular $\lim_{p \to 0} m_c(p)=M_c$. 
% The proof of this claim is analogous to the firework case: one has to
% use the explicit definition of $m_c$ given in 
% the proof of Corollary~\ref{cor:GW-Homogeneous-reverse-Firework2}.
% % take $m<M_c$ as $p \to 0$ $\phi_2(m)/p \uparrow \phi_1(m)<+\infty$ (Monotone Convergence Theorem),
% % hence $\phi(m) \to 0$ as $p \to 0$ which implies $m_c(p)>m$ eventually.  
Indeed, if $M_c=1$ there is nothing to prove. Otherwise, suppose that $1<r<M_c$; since
$\lim_{p \to 0} p \sum_{n=1}^\infty m^n \pr(R \ge n) \prod_{j=1}^{n-1} (1-p\pr(R \ge j))=0$ according to 
Theorem~\ref{th:GW-Homogeneous-reverse-Firework2} and
Corollary~\ref{cor:GW-Homogeneous-reverse-Firework2}
we have that
$\liminf_{p \to 0} m_c(p) \ge r$.
\end{exmp}

% 
% \textcolor{red}{CLAIM: $M_c=1/\prod_{j=1}^\infty \pr(R<j)$. Use the fact that (1) $\delta:=\prod_{j=1}^\infty \pr(R<j) \in (0,1)$
% if $\sum_{i=1}^\infty  m^i \pr(R \ge i)< +\infty$ when $m \ge 1$ (since it implies $\sum_{i=1}^\infty  %m^i 
% \pr(R \ge i)< +\infty$),
% (2) $m^{n} \prod_{j=1}^{n} \pr(R<j) \in [\delta^n m^n, (1+\varepsilon)\delta^n m^n)$ (for all $\varepsilon >0$ eventually as
% $n \to \infty$) and (3) $m \delta >1$ implies that $\sum_{i=0}^{n-1} m^{i}\prod_{j=1}^i \pr(R<j) \sim 
% \sum_{i=n_0}^{n-1} m^{i}\prod_{j=1}^i \pr(R<j)$ as $n \to \infty$ for all $n_0$. This proves that $M_c \le 1/\delta$.}
% 
% 
When $\pr(N=0)>0$ and $\pr(\mathrm{survival})>0$ it could happen that 
the firework and reverse firework processes die out almost surely on a $\mu$-positive set of infinite labelled trees with at least
one station at the root. This implies that, even when the probability of survival is positive, it is not possible, in general, to
have positive probability of survival for almost every realization of the environment (but only for almost every realization of
the unlabelled GW-tree) as the following example shows.

\begin{exmp}\label{exmp:counterexample}
Suppose that $\pr(N=0) \in (0,1)$, $m >1$ and $\pr(R \le M)=1$ (for some $M \in \N^*$). Under these conditions, it is an easy
exercise to prove that the set of infinite labelled trees $\Upsilon$ with $l(\Upsilon) \ge 1$ such that every vertex 
between distance $1$ and $M$ from the root has label 0, has always $\mu$-positive measure. 
On this set the firework and reverse firework processes die out
almost surely. Nevertheless, according to Theorem~\ref{th:GW-Homogeneous-Firework2} for every sufficiently large $m$
the firework process survives with (strictly) positive probability, Similarly, according to Theorem~\ref{th:GW-Homogeneous-reverse-Firework2}
there is a positive (strictly smaller than 1) probability of survival for the reverse firework process  
(note that, if $\pr(R \le M)=1$, then $\phi_1(m) <+\infty$ for all $m$).
This happens, for instance, in the Bernoulli model described in Example~\ref{exmp:bernoulli}.
\end{exmp}

\section{Proofs}\label{sec:proofs}

%Here are the proofs of the results of the previous sections.

\begin{proof}[Proof of Lemma~\ref{lem:BRWihenrited}]
In the following we consider the canonical MBP $\{Z_n\}_{n \in \N}$ defined on the probability space $\mathbb{LT}$
endowed with the probability measure $\mu_{\varsigma}$. Hence $\tau^L(\Upsilon)=\Upsilon$ for all $\Upsilon \in \mathbb{LT}$.
Remember the definition of the generating function $\varphi(z)=\sum_{n \in \N} \rho(n) z^n$ and its smallest fixed point
$\alpha \in [0,1]$. Define $S_J:=\{g \in \N^J\colon |g|:=\sum_{j \in J} g(j)<+\infty\}$ and denote by $\pr_N$ the common law
of $\{N_w\}_{w \in \mathcal{W} \setminus\{\emptyset\}}$.
Given an inherited couple $(A,\widetilde A)$ where $A, \widetilde A \subset \mathbb{LT}$, 
if the expected value $m$ of the law $\rho$ satisfies $m \le 1$ then $\mu_j(A) \ge \mu_j(E)= \alpha=1$ for all $j \in \overline J$,
and the conclusion follows.

Suppose that $m>1$. 
By conditioning on $Z_1$, using the conditional independence and the Markov property for labelled GW-trees we have, 
for all $j \in  \overline J$,
\[
\begin{split}
 \mu_j(\widetilde A)&=\mu_j(A)=\E^{\mu_j}[\ident_{A }]=\E^{\mu_j}[\mu_j(\Upsilon \in A|Z_1)] %\\ &
 \le \E^{\mu_j}[\mu_j(\Upsilon\colon \Upsilon^{(1)} \in \widetilde A, \ldots ,\Upsilon^{(|Z_1|)} \in \widetilde A|Z_1)] \\
&=\sum_{n \in \N}\,\,  \sum_{g \in S_J \colon |g|=n} \,
\mu_j(\Upsilon\colon \Upsilon^{(1)} \in \widetilde A, \ldots ,\Upsilon^{(|g|)} \in \widetilde A|Z_1=g) \mu_j(Z_1=g) \\
&= \sum_{n \in \N}\,\,  \sum_{g \in S_J \colon |g|=n} \,\mu_j(Z_1=g) 
\prod_{i=1}^{|g|} \mu_j(\Upsilon\colon \Upsilon^{(i)} \in \widetilde A|Z_1=g) \\
&= \sum_{n \in \N}\,\,  \sum_{g \in S_J \colon |g|=n} \,\, \mu_j(Z_1=g)
\prod_{{j^\prime} \in J} \mu_{{j^\prime}}(\Upsilon \in \widetilde A)^{g({j^\prime})} \\
&=
\sum_{n \in \N}\,\, \sum_{g \in S_J \colon |g|=n} \,\, \rho(|g|)\frac{|g| !}{\prod_{j^\prime \in J} g(j^\prime)!}
\prod_{j^\prime \in J} \pr_N(j^\prime)^{g({j^\prime})}\prod_{{j^\prime} \in J} \mu_{{j^\prime}}(\widetilde A)^{g({j^\prime})} 
 \\
&= \sum_{n \in \N} \rho(n) \Big ( \sum_{j^\prime \in J} \mu_{{j^\prime}}(\widetilde A) 
\pr_N(j^\prime)\Big )^n
= \varphi \Big ( \sum_{j^\prime \in J_N} \mu_{{j^\prime}}(\widetilde A) \pr_N(j^\prime)\Big ),
%  q(A,j)&=\E[(\mathbf{T},f) \in A | Z_0=j]=\E[\E[(\mathbf{T},f) \in A|Z-1, Z_0] | Z_0=j]=\E[\E[(\mathbf{T},f) \in A|Z_1] | Z_0=j]\\
% \le \E[\E[(\mathbf{T},f)^{(1)} \in A, \ldots \E[(\mathbf{T},f)^{(Z_1)} \in A|Z_1] | Z_0=j]=\mathbf{G}(q(A, \cdot)|j)
\\\end{split}
\]
where $|Z_1|:=\sum_{j \in J}Z_1(j)$ is the total number of descendants of the root.

 Take a generic $h \in [0,1]^J$ and suppose that $\alpha \le h(j) \le \varphi \big (\sum_{j^\prime \in J_N} h(j^\prime) \pr_N(j^\prime) \big )$
for all $j \in \overline J$. Define $\overline h:=\sum_{j^\prime \in J_N} h(j^\prime) \pr_N(j^\prime)$.
Hence
%This implies that 
\[
%  \bar h:=\sum_{j \in J} \pr(N=j) h(j) \le \sum_{j \in J} \pr(N=j) \mathbf{G}(h|j)= \varphi(\sum_{y \in J} h(y) \pr(N=y))=
% \varphi(\bar h) 
 \bar h \le \sup_{j \in \overline J} h(j) \le %\varphi(\sum_{y \in J} h(y) \pr(N=y))=
\varphi(\bar h). 
\]
Thus, either  $\bar h=\alpha$ or $\bar h=1$. This implies 
$\bar h \le \sup_{j \in \overline J} h(j) \le \varphi(\bar h)=\bar h$.
Since $h(j) \in [\alpha,1]$ for all $j \in \overline J$, then $\bar h=\alpha$
implies $h(j)=\alpha\equiv \mu_j(E)$ for all $j \in \overline J$, while $\bar h=1$ implies $h(j)=1$ for all $j \in J_N$.

By Definition~\ref{def:inherited}, $\mu_j(A)=\mu_j(\widetilde A)$, $\mu_j(E \setminus A)=\mu_j(E \setminus \widetilde A)=0$ 
and $\mu_j(A \setminus E)=\mu_j(\widetilde A \setminus E)$  for all $j \in \overline J$.
If $h(j):=\mu_j(\widetilde A)$ for all $j \in \overline J$ then we have that either $\mu_j(\widetilde A)=\mu_j(A)=1$ for all $j \in J_N$
or $\mu_j(\widetilde A)=\mu_j(A)=\mu_j(E)$ for all $j \in \overline J$ and the conclusion follows.

If ${\varsigma}$ (the law of $N_\emptyset$)  
% , in general,
% \[
%  \begin{split}
%  \mu_{\varsigma}((\mathbf{T},f) \in A) &=\E^{{\varsigma}}[\ident_{(\mathbf{T},f) \in A }]=\E^{{\varsigma}}[\mu_{\varsigma}((\mathbf{T},f) \in A|Z-1)]\\
% &\le \E^{{\varsigma}}[\prod_{y \in J} \mu_y((\mathbf{T},f) \in A)^{Z_1(j)}]=
% \sum_{j \in J} \mathbf{G}(q(A, \cdot)|j) {\varsigma}(j).
% %  q(A,j)&=\E[(\mathbf{T},f) \in A | Z_0=j]=\E[\E[(\mathbf{T},f) \in A|Z-1, Z_0] | Z_0=j]=\E[\E[(\mathbf{T},f) \in A|Z_1] | Z_0=j]\\
% % \le \E[\E[(\mathbf{T},f)^{(1)} \in A, \ldots \E[(\mathbf{T},f)^{(Z_1)} \in A|Z_1] | Z_0=j]=\mathbf{G}(q(A, \cdot)|j)
% \end{split}
% \]
% In this particular case we have 
satisfies $\mathrm{supp}({\varsigma}) \subseteq J^\prime$ then $\mu_{\varsigma}=\sum_{j \in J^\prime} {\varsigma}(j) \mu_j$, whence,
for every measurable $B \subseteq \mathbb{LT}$,
$
\mu_{\varsigma}(B)=\sum_{j \in J^\prime} {\varsigma}(j) \mu_j(B) 
$
which yields the conclusion (by taking $J^\prime=\overline J$, $B:=A \triangle E$ and $J^\prime=J_N$, $B:=A$).
\end{proof}

\begin{lem}\label{lem:1}
 If $\{t_{i,n}\}_{i,n \in \N, i \le n}$ is an arbitrary nonnegative sequence  and
\begin{equation}\label{eq:sumprod}
 \sum_{n=0}^\infty \prod_{i=0}^n G_{N_i}(t_{i,n})<+\infty
\end{equation}
then
\[
 \pr \left ( \sum_{n=0}^\infty \prod_{i=0}^n t_{i,n}^{N_i} <+\infty
\right )=1.
\]
In particular if equation \eqref{eq:sumprod} holds
\[
 \pr \left ( \sum_{n=0}^\infty \prod_{i=0}^n t_{i,n}^{N_i} <+\infty \Big | \mathcal{N}=\mathbf{n}
\right )=1
\]
for $\mu$-almost all $\mathbf{n}=\{n_i\}_{i \in \N} \in {\mathbb{N}^{\mathbb{N}}}$.
% , where $\mathcal{N}$ is the ${\mathbb{N}^{\mathbb{N}}}$-valued random variable
% $\mathcal{N}:=\{N_i\}_{i \in \N}$ and $\{\mathcal{N}=\mathbf{n}\}=\bigcap_i \{N_i=n_i\}$.
\end{lem}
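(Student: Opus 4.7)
The plan is to compute the expectation of the random sum and exploit independence. Since $N_i$ has generating function $G_{N_i}(t) = \E[t^{N_i}]$ and the $N_i$'s are independent, for each fixed $n$ we have
\[
\E\Bigl[\prod_{i=0}^n t_{i,n}^{N_i}\Bigr] \;=\; \prod_{i=0}^n \E[t_{i,n}^{N_i}] \;=\; \prod_{i=0}^n G_{N_i}(t_{i,n}),
\]
(using the convention $0^0 = 1$, so that $\E[0^{N_i}] = \pr(N_i=0) = G_{N_i}(0)$). Applying Tonelli's theorem to interchange the expectation with the sum over $n$ (all terms are nonnegative),
\[
\E\Bigl[\sum_{n=0}^\infty \prod_{i=0}^n t_{i,n}^{N_i}\Bigr] \;=\; \sum_{n=0}^\infty \prod_{i=0}^n G_{N_i}(t_{i,n}) \;<\; +\infty
\]
by hypothesis~\eqref{eq:sumprod}. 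A nonnegative random variable with finite expectation is a.s.~finite, so this gives the first conclusion $\pr\bigl(\sum_{n}\prod_i t_{i,n}^{N_i}<+\infty\bigr)=1$.

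For the conditional version, observe that the random quantity $S:=\sum_{n}\prod_{i=0}^n t_{i,n}^{N_i}$ is a function of $\mathcal N$ alone (the $t_{i,n}$ are deterministic, and no radii enter). Hence conditionally on $\mathcal N=\mathbf n$ it equals the deterministic number $\sum_{n}\prod_{i=0}^n t_{i,n}^{n_i}$, and $\pr(S<+\infty\mid \mathcal N=\mathbf n) \in\{0,1\}$. Using the disintegration
\[
\pr(S<+\infty) \;=\; \int_{\N^\N} \pr(S<+\infty\mid \mathcal N=\mathbf n)\,\mu(\dd \mathbf n)
\]
from Section~\ref{subsec:N}, the already-established equality $\pr(S<+\infty)=1$ forces $\pr(S<+\infty\mid\mathcal N=\mathbf n)=1$ for $\mu$-almost every $\mathbf n$, which is the second assertion.

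There is no real obstacle here; the whole argument is a one-line Tonelli computation plus the standard disintegration that was set up in Section~\ref{subsec:N}. The only point requiring a moment's care is the conventional identity $G_{N_i}(0)=\pr(N_i=0)=\E[0^{N_i}]$, which is needed to cover factors $t_{i,n}=0$ in the product.
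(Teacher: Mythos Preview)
Your proof is correct and follows essentially the same route as the paper's: compute $\E\bigl[\sum_n\prod_i t_{i,n}^{N_i}\bigr]$ by interchanging sum and expectation (Tonelli/Monotone Convergence), factor using independence to obtain $\sum_n\prod_i G_{N_i}(t_{i,n})<\infty$, conclude almost-sure finiteness, and then invoke the disintegration formula for the conditional statement. Your remark that $S$ is $\sigma(\mathcal N)$-measurable (so the conditional probability is $\{0,1\}$-valued) and your handling of the convention $0^0=1$ are slight elaborations on points the paper leaves implicit.
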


\begin{rem}\label{rem:tail}
Suppose that %$t_{i,n} \le 1$ (for all $i,n$)  and 
$t_{i,n} \to 1$ as $n \to \infty$ for all $i \in \N$.
 We observe that the event $\{\sum_{n=0}^\infty \prod_{i=0}^n t_{i,n}^{N_i}< +\infty\}$ is a tail event with respect to
$\{\sigma(N_i\colon i \le n)\}_n$. Hence its probability is either 0 or 1. Indeed, for all $i_0 \in \N$,
$\prod_{i=0}^n t_{i,n}^{N_i} \sim \prod_{i=i_0}^n t_{i,n}^{N_i}$ as $n \to \infty$ hence
$\{\sum_{n=0}^\infty \prod_{i=0}^n t_{i,n}^{N_i}< +\infty\}=\{\sum_{n=i_0}^\infty \prod_{i=i_0}^n t_{i,n}^{N_i}< +\infty\}$.
\end{rem}

\begin{proof}[Proof of Lemma~\ref{lem:1}]
 Let $\xi:= \sum_{n=0}^\infty \prod_{i=0}^n t_{i,n}^{N_i}$. Note that, by the Monotone Convergence Theorem and
using the independence of $\{N_i\}_{i \in \N}$,
\[
\begin{split}
 \E \left [ \xi \right ] &=  \E_\mu \left [ \xi \right ] = \int \sum_{n=0}^\infty \prod_{i=0}^n t_{i,n}^{N_i} \diff \mu
= \sum_{n=0}^\infty \E_\mu \left [ \prod_{i=0}^n t_{i,n}^{N_i} \right ] \\
&= \sum_{n=0}^\infty \prod_{i=0}^n G_{N_i}(t_{i,n}) < +\infty
\end{split}
\]
thus $\pr(\xi<+\infty)\equiv\mu(\xi <+\infty)=1$. The last equality is an easy consequence of equation~\eqref{eq:disintegration}.
\end{proof}

\begin{proof}[Proof of Theorem~\ref{thm:2}]
We investigate the behavior of the deterministic counterpart of our process.
Since $R_{i,j} \sim R$ and $N_i \sim N$ for all $i,j$ then $\widetilde R_{i} \sim \widetilde R$
where 
\[
 \pr(\widetilde R < i)= \sum_{j \in \N} \pr(R <i)^j \pr(N=j)=G_N(\pr(R <i)).
\]
By  \cite[Theorem 2.1]{cf:JMZ} 
the a.s.~extinction of the deterministic counterpart is equivalent to 
\[
 \sum_{n=1}^\infty \prod_{i=0}^n \pr(\widetilde R < i+1)=+\infty.
\]
Hence equation~\eqref{eq:sumprod2} is equivalent to $\pr(V)=0$ which, in turn, 
is equivalent to $\pr(V| \mathcal{N}=\mathbf{n})=0$ for $\mu$-almost all configurations $\mathbf{n}$.

We are left to prove that $\pr(V)>0$ implies $\mu(\mathbf{n}  \in {\mathbb{N}^{\mathbb{N}}} \colon \pr(V|\mathcal{N}=\mathbf{n})>0)=\pr(N>0)$.
%Moreover, consider $\mathcal{V}:=\{\mathbf{n}:\pr(V|\mathcal{N}=\mathbf{n})>0\}$.
Note that $\pr(V)=\pr(V| N_0>0) \pr(N_0>0)$
since $\pr(V| N_0=0)=0$; moreover $\pr(V| N_0>0)>0$ if and only if 
$\pr(V)>0$. We condition now on the event $\{N_0>0\}$.
We denote by $\overline{{\mathbb{N}^{\mathbb{N}}}}$ the space $\{\mathbf{n}\colon n_0>0\}$ and by $\overline \mu$ the measure $\mu$
conditioned on $\overline{{\mathbb{N}^{\mathbb{N}}}}$ (clearly $\mu(\overline{{\mathbb{N}^{\mathbb{N}}}})=\pr(N_0>0)$). Observe that $\mu(\mathbf{n}\colon  
\limsup_i n_i >0)=1$,
$\mu(\mathbf{n}\colon n_0>0)=\pr(N_0>0)$ and that
the variables $\{N_i%|_{N_0>0}
\}_{i \ge 1}$ are independent %on $\{N_0>0\}$ 
with respect to the conditioned
probability $\pr(\cdot| N_0>0)$.
Denote by 
\[
 W_{k}:=\{\mathbf{n} \in \overline{{\mathbb{N}^{\mathbb{N}}}}\colon  \pr(\textrm{``}V \textrm{ starting from }\mathbf{n}{(k)} \textrm{ stations at }
k\textrm{''}| \mathcal{N}=\mathbf{n})>0 \}
\]
the set of realizations of the environment such that the firework process starting from 
vertex $k$ (and moving rightwards) survives with  positive probability. 
Note that $\mu(W_0)=\mu(\mathbf{n} \in {\mathbb{N}^{\mathbb{N}}} \colon  \pr(V| \mathcal{N}=\mathbf{n})>0 )$.
Clearly $W_k \in \sigma (N_i\colon i \ge k)$
where, in this case, $\{N_i\}_{i \in \N}$ is the canonical realization of $\mathcal{N}$ on ${\mathbb{N}^{\mathbb{N}}}$.

If for a fixed sequence in $\overline{{\mathbb{N}^{\mathbb{N}}}}$  there is survival for the firework process 
starting from $n_{i_0}$ stations at $i_0$ %(in $\{i \in \N\colon  i \ge i_0\}$); 
then,
 by the FKG inequality,  there is survival starting from every $i \le i_0$ 
(note that %, since
%  $\liminf_{n \to \infty} n(1-G_N(\pr(R < n))) >1$, 
if equation~\eqref{eq:sumprod2.5} holds then the radii are unbounded variables, hence
the event $\{R_0>i_0\}$ has a positive 
  probability). Whence $W_k \supseteq W_{k+1}$ for all $k \in \N$.

Moreover, if there is survival for a sequence $\mathbf{n}$ then for all $j \in \N$ 
  there exists $i_0 \ge j$ such that there is survival starting from
$\mathbf{n}{(i_0)}$ particles at $i_0$. Hence 
$W_0 \subseteq \limsup_k W_k$ which implies that $W_0=\limsup_k W_k=W_i$ for all $i \in \N$.
Hence $W_0$ is a tail event, namely it belongs to $\bigcap_{k \in \N} \sigma (N_i\colon i \ge k)$.
Thus
$\overline \mu(W_0)$ is either $0$ or $1$. This implies that 
$\mu(W_0)$ is either $0$ or $\pr(N_0>0)$.

Equation~\eqref{eq:sumprod2.5} implies $\pr(V)>0$ or, equivalently, 
$\mu(\mathbf{n} \in {\mathbb{N}^{\mathbb{N}}} \colon \pr(V| \mathcal{N}=\mathbf{n})>0 )>0$.
Since $\mu(W_0)=\mu(\mathbf{n} \in {\mathbb{N}^{\mathbb{N}}} \colon \pr(V| \mathcal{N}=\mathbf{n})>0 )$
we have that $\mu(\mathbf{n} \in {\mathbb{N}^{\mathbb{N}}} \colon \pr(V| \mathcal{N}=\mathbf{n})>0 )=\pr(N_0>0)$.
\end{proof}

Before proving Proposition~\ref{pro:homogeneous1} we need another lemma.

\begin{lem}\label{lem:lem2}

\begin{enumerate}
 \item Consider a sequence  $\{x_i\}_{i\in \mathbb{N}}$ of nonnegative real numbers.
If $\{y_i\}_{i \in \mathbb{N}}$ is a nondecreasing
sequence such that $y_0>0$ %$\inf_{n\in \mathbb{N}} y_n>0$ 
and $\limsup_n \sum_{i=0}^n x_i / y_n>0$
then
\[
 \sum_{i=1}^\infty x_i =+\infty \Longleftrightarrow \sum_{i=0}^\infty \frac{x_i}{y_i} =+\infty.
\]
In particular, if $x_0>0$, 
\[
 \sum_{i=1}^\infty x_i =+\infty \Longleftrightarrow \sum_{i=0}^\infty \frac{x_i}{\sum_{j=0}^i x_j} =+\infty.
\]
\item
If $\{\alpha_i\}_{i \in \mathbb{N}}$ is a 
nondecreasing sequence of strictly positive real numbers such that
$\liminf_{n} n/\alpha_n>0$
then for every nonincreasing, nonnegative sequence $\{z_i\}_{i \in \mathbb{N}}$
such that $\sum_{n=0}^\infty z_n <+\infty$ we have $\lim_{n \to \infty} z_n \alpha_n =0$.
\end{enumerate}
\end{lem}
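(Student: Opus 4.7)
The plan is to prove the two parts separately; both are essentially Abel summation tricks.

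For part (1), the easy direction is $\sum_i x_i/y_i = +\infty \Rightarrow \sum_i x_i = +\infty$: since $y_i \ge y_0 > 0$, we have $x_i/y_i \le x_i/y_0$, so $\sum_i x_i/y_i \le (1/y_0)\sum_i x_i$. For the nontrivial direction, I would argue by contradiction: assume $\sum_i x_i = +\infty$ and $T_\infty := \sum_i x_i/y_i < +\infty$. Split into two cases depending on whether the nondecreasing sequence $y_n$ is bounded or tends to $+\infty$. In the bounded case $y_n \le Y$, one has $x_i/y_i \ge x_i/Y$ and the assumption $T_\infty < \infty$ immediately contradicts $\sum x_i = +\infty$. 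In the unbounded case I would apply Abel summation to rewrite
\[
S_N := \sum_{i=0}^{N} x_i = \sum_{i=0}^{N} y_i \cdot (x_i/y_i) = y_0 T_N + \sum_{i=0}^{N-1} (T_N - T_i)(y_{i+1}-y_i),
\]
then for any $\eps>0$ pick $I$ with $T_\infty - T_I < \eps$ and bound the tail sum by $\eps(y_N - y_I)$ and the head sum by $T_\infty\, y_I$. Dividing by $y_N \to \infty$ gives $\limsup_N S_N/y_N \le \eps$, and sending $\eps \downarrow 0$ contradicts the hypothesis $\limsup_N S_N/y_N > 0$.

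The ``in particular'' case follows by choosing $y_i := \sum_{j=0}^{i} x_j$: the hypothesis $x_0 > 0$ guarantees $y_0 > 0$, the sequence is nondecreasing, and trivially $S_n/y_n = 1$ so the $\limsup$ condition holds.

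For part (2), the key preliminary fact is that a nonincreasing nonnegative sequence $\{z_n\}$ with $\sum_n z_n < +\infty$ satisfies $n z_n \to 0$. I would prove this by monotonicity:
\[
n z_{2n} \le \sum_{i=n+1}^{2n} z_i \xrightarrow[n\to\infty]{} 0,
\]
and a parallel bound handles odd indices. The hypothesis $\liminf_n n/\alpha_n > 0$ then gives a constant $c>0$ and $N_0$ with $\alpha_n \le n/c$ for $n \ge N_0$, so $z_n \alpha_n \le (n z_n)/c \to 0$.

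The main (and essentially only) obstacle is the Abel summation bookkeeping in part (1) Case B — making sure the telescoping identity is assembled correctly so that the uniform bound $T_N \le T_\infty$ together with the $\eps$-tail of the series $T_\infty - T_I$ yields $S_N/y_N \to 0$ rather than some weaker estimate. Everything else is bookkeeping.
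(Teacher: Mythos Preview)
Your proof is correct. The approach, however, differs from the paper's in both parts.

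For part~(1), the paper proves the nontrivial direction directly rather than by contradiction: it fixes $\delta \in (0,\limsup_n S_n/y_n)$ and inductively constructs an increasing sequence $\{n_j\}$ with $\sum_{i=n_j+1}^{n_{j+1}} x_i/y_{n_{j+1}}>\delta$, then bounds $\sum_i x_i/y_i$ from below by $\sum_j \sum_{i=n_j+1}^{n_{j+1}} x_i/y_{n_{j+1}}=+\infty$. Your Abel summation argument is equally valid and perhaps more systematic; the paper's block decomposition is more hands-on and avoids the case split on whether $y_n$ is bounded.

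For part~(2), the paper argues by contradiction and then \emph{invokes part~(1)}: assuming $z_{n_j}\alpha_{n_j}\ge\delta$ along a subsequence, it bounds $\sum_n z_n \ge \delta\sum_j (n_{j+1}-n_j)/\alpha_{n_{j+1}}$ and applies part~(1) with $x_j=n_{j+1}-n_j$, $y_j=\alpha_{n_{j+1}}$ to show this diverges. Your route via the classical fact $n z_n\to 0$ for monotone summable sequences is more direct and self-contained; the paper's version is more economical in that it recycles the first part, exhibiting the lemma as a unified statement.
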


\begin{proof}
\begin{enumerate}
 \item 
Observe that $x_i/y_i \le x_i/y_0$, %\varepsilon$ where $\varepsilon:=\inf_{n\in \mathbb{N}} y_n>0$.
this implies $\sum_{i=0}^\infty {x_i}/{y_i} \le \sum_{i=1}^\infty x_i/y_0$. Whence
$\sum_{i=0}^\infty {x_i}/{y_i} =+\infty$ implies $\sum_{i=1}^\infty x_i =+\infty$.

Conversely, suppose that $\sum_{i=1}^\infty x_i =+\infty$. Fix
$\delta \in(0,\limsup_n \sum_{i=0}^n x_i / y_n)$. For all $m \in \mathbb{N}$ there exists $n >m$ such that
$\sum_{i=m}^n x_i/y_n > \delta$. By induction we can find a strictly increasing sequence
$\{n_j\}_{j \in \mathbb{N}}$ of natural numbers such that $\sum_{i=n_j+1}^{n_{j+1}} x_i/y_{n_{j+1}}>\delta$.
Clearly
\[
 \sum_{i=0}^\infty \frac{x_i}{y_i}=\sum_{j=0}^\infty \sum_{i=n_j+1}^{n_{j+1}} \frac{x_i}{y_i} \ge
\sum_{j=0}^\infty \sum_{i=n_j+1}^{n_{j+1}} \frac{x_i}{y_{n_{j+1}}}=+\infty.
\]
\item
By contradiction, suppose that, for some increasing sequence $\{n_j\}_{j \in \mathbb{N}}$ and  $\delta>0$, we have 
$z_{n_j} \alpha_{n_j} \ge \delta$ for all
$j \in \mathbb{N}$. 
Then for all $n \in (n_j,n_{j+1}]$ we have $z_n \ge z_{n_{j+1}} \ge \delta/\alpha_{n_{j+1}}$.
Thus,  $\sum_{n=0}^\infty z_n \ge \sum_{j=0}^\infty (n_{j+1}-n_{j})\delta/\alpha_{n_{j+1}}$.
If we define $x_j:=n_{j+1}-n_j$ and $y_j:=\alpha_{n_{j+1}}$ then $\sum_{j \in \N} x_j=+\infty$ and
$\limsup_{j \to \infty} \sum_{i=0}^j x_i /y_j \ge \liminf_{n \to \infty} n/\alpha_n >0$. 
Thus, according to (1), %the previous part of this lemma,
we have
\[
 \sum_{n=0}^\infty z_n \ge \sum_{j=0}^\infty (n_{j+1}-n_{j})\delta/\alpha_{n_{j+1}}=\delta \sum_{j=0}^\infty \frac{x_{j}}{y_j}=
+\infty
\]
which contradicts
$\sum_{n=0}^\infty z_n <+\infty$.
\end{enumerate}

\end{proof}

A particular case in Lemma~\ref{lem:lem2} is
 $\alpha_n=n$.

\begin{proof}[Proof of Proposition~\ref{pro:homogeneous1}]
\begin{enumerate}
\item 
% The annealed counterpart of the process is an homogeneous firework process with 
% virtual radii $\{\widetilde R_i\}_{i \in \N}$ satisfying $\pr(\widetilde R_i \ge n) = 1-G_N(\pr(R < n))$.
In order to prove that equation~\eqref{eq:sumprod2} holds we use the Kummer's test.
According to Kummer's result, if $\{a_n\}_{n \in \N}$ and  $\{p_n\}_{n \in \N}$ are two positive sequences,
$\alpha:=\liminf_n (p_n a_n/a_{n+1}-p_{n+1})$ and $\beta:=\limsup_n (p_n a_n/a_{n+1}-p_{n+1})$ then
$\alpha >0$ implies $\sum_n a_n <+\infty$, while $\sum_n 1/p_n=+\infty$ and $\beta <0$ implies  $\sum_n a_n =+\infty$.

If we take $p_n=n+2$ and $a_n =\prod_{i=0}^n G_N(\pr(R < i+1))$ then $\alpha >0$ hence 
$\pr(V)>0$ (and  $\mu(\mathbf{n}\colon \pr(V|\mathcal{N}=\mathbf{n})>0)>0$).

 \item As in the previous case, it follows immediately from Theorem~\ref{thm:2} and Kummer's Test
with $p_n=n+2$.
 
 \item If $\E[N]<+\infty$ then $1-G_N(\pr(R < n))\sim \E[N] \pr(R \ge n)$ as $n \to \infty$.
The result follows from $(2)$.

\item 
Trivially, $\E[R]<+\infty$ if and only if $\sum_{n \in \mathbb{N}} \pr(R \ge n) <+\infty$.
From Lemma~\ref{lem:lem2}(2) with $\alpha_n=n$ and $z_n= \pr(R \ge n)$ we have that 
$\E[R]<+\infty$ implies $\lim_{n \to \infty} n \pr(R \ge n) =0 < 1/\E[N]$ and the previous part of the theorem
applies.

\item In this case, since $G^\prime$ is a power series with nonnegative coefficients,  
$1-G_N(\pr(R < n)) \ge \pr(R \ge n) G^\prime_N(\pr(R < n))$. Thus, the result follows from $(1)$.
\end{enumerate}
\end{proof}

\begin{proof}[Proof of Corollary~\ref{cor:homogeneous1}]
The main idea of the proof is to compute a suitable asymptotic estimate $1-G_N(x)\sim (1-x)f(x)$ as $x \to 1^-$.
\begin{enumerate}
\item
Note that, using the Cauchy product of power series,  $(1-G_N(x))/(1-x)= \sum_{n=0}^\infty \pr(N >n) x^n$ for all 
$|x| <1$.
From a well-known Tauberian theorem (see e.g.~\cite[Theorem 9]{cf:B74} or \cite[Sec.~XIII.5, Theorem 5]{cf:FII}),
we have that $\pr(N >n) \sim n^{-\alpha} L(n)$ if and only if $\sum_{n=0}^\infty \pr(N >n) x^n \sim \Gamma(1-\alpha) 
L(1/(1-x))/(1-x)^{1-\alpha}$
as $x \to 1^-$. Hence $n(1-G_N(\pr(R<n)))\sim n \pr(R \ge n)^{\alpha}L(1/\pr(R \ge n)) \Gamma(1-\alpha)$ and 
Proposition~\ref{pro:homogeneous1}(1) yields the
result.
\item It follows analogously from Proposition~\ref{pro:homogeneous1}(2).
\item
If $\pr(N \ge n) \sim c n^{-1}$, since $\sum_{n=0}^\infty \pr(N \ge n)$ is divergent, then $\sum_{n=0}^\infty \pr(N \ge n) x^n \sim 
c \ln (1/(1-x))$.
Indeed, for any $\varepsilon >0$, there exists $\overline n$ such that for all $n\ge \overline n$, we have  $\pr(N\ge n) \in (c(1-\varepsilon)/n, 
c(1+\varepsilon)/n)$. Hence, eventually  as $x \to 1^-$,
\[ 
\begin{split}
\frac{\sum_{n=0}^\infty \pr(N \ge n) x^n}{c \ln (1/(1-x))} &= \frac{\sum_{n=0}^\infty \pr(N \ge n) x^n}{c \sum_{n=1}^{\infty} x^n/n} = 
\frac{\sum_{n=0}^{\overline n-1} \pr(N \ge n) x^n+\sum_{n=\overline n}^\infty \pr(N \ge n) x^n}{c 
\sum_{n=1}^{\overline n-1} x^n/n+ c \sum_{n=\overline n}^\infty  x^n/n} \\
& \sim 
\frac{\sum_{n=\overline n}^\infty \pr(N \ge n) x^n}{c \sum_{n=\overline n}^\infty  x^n/n} \in (1-\varepsilon,1+\varepsilon).
\end{split}
\]
This implies immediately that 
$n(1-G_N(\pr(R<n)))\sim n \, c\, \ln(1/\pr(R \ge n))\pr(R \ge n)$ and, again, Proposition~\ref{pro:homogeneous1}(1) yields the
result.
\item It follows analogously from Proposition~\ref{pro:homogeneous1}(2).
 \end{enumerate}
\end{proof}

\begin{proof}[Proof of Theorem~\ref{thm:3}]
We study the behavior of the deterministic counterpart.
Note that $\pr(\widetilde R_{i}<n-i+1)=G_{N_{i}}(\pr(R_{i} <n-i+1))$ is the probability that the
$n+1$-vertex does not belong to the radius of influence of the $i$-th vertex.
Hence $\prod_{i=0}^n G_{N_{i}}(\pr(R_{i} <n- i+1))$ is the probability
that the $n+1$-vertex does not belong to the radius of influence of any vertex to its left.
Denote this event by $E_n$:
by Borel-Cantelli $\pr(\limsup_n E_n)=1$. Whence, there exists $n_0$ such that 
for all $\pr \left (\bigcap_{k \ge n_0} E_k^\complement \right )>0$. Since $\pr(V_{n_0})>0$,
where $V_{n_0}$ is the event ``all the stations at $0,1,\ldots,n_0$ are activated'', we have
(using the FKG inequality)
\[
 \pr(V) \ge \pr \left (\bigcap_{k \ge n_0} E_k^\complement \Big | V_{n_0} \right ) \pr(V_{n_0})= 
\pr \left ( \bigcap_{k \ge n_0} E_k^\complement \cap V_{n_0}
\right ) \ge \pr \left ( \bigcap_{k \ge n_0} E_k^\complement \right ) \pr(V_{n_0})>0.
\]
In particular if we have a deterministic environment, say $N_i:=m_i \in \N$ for all $i \in \N$, and
\begin{equation}\label{eq:sumprod4}
 \sum_{n=0}^\infty \prod_{i=0}^n \pr(R_{i} < n-i+1)^{m_{i}}<+\infty
\end{equation}
then, since $G_{N_{i}}(x)=x^{m_{i}}$, equation~\eqref{eq:sumprod3} holds and $\pr(V)>0$.

Finally, by Lemma~\ref{lem:1} (using $t_i:= \pr(R_{i} < n-i+1)$) we see that equation~\eqref{eq:sumprod4}
holds for $\mu$-almost all configurations $\mathbf{n}$ and this yields the result.
  \end{proof}

\begin{proof}[Proof of Theorem~\ref{th:reversehomogeneous}]
 Apply \cite[Theorem 2.8]{cf:JMZ} to the deterministic annealed process.
Trivially $\E[\widetilde R]<+\infty$ if and only if $W<+\infty$.
 The
results follow immediately from the equivalence between
$\pr(S)=1$ (resp.~$\pr(S)=0$) and
$\pr(S|\mathcal{N}=\mathbf{n})=1$ (resp.~$\pr(S|\mathcal{N}=\mathbf{n})=0$) for $\mu$-almost all configurations $\mathbf{n}$.
\end{proof}

% Before proving Theorem~\ref{th:big} we state and prove an easy lemma.
% 
% \begin{lem}\label{lem:WR}
% Let $N$ be a $\N$-valued random variable and
% define $\overline W:=\sum_{n \in \N} (1-G_N(t_n))$
% where $\{t_n\}_{n \in \N}$ is an arbitrary sequence in $[0,1]$.
%  If $\E[N]<+\infty$ then $\overline W <+\infty$ if and only if
% $\sum_{n \in \N} (1-t_n)<+\infty$.
% \end{lem}
% 
% \begin{proof}[Proof of Lemma~\ref{lem:WR}]
%  This follows easily from (1) $\lim_{n \to \infty} (1-t_n) = 0$ $\Longleftrightarrow$ $\lim_{n \to \infty} (1-G_N(t_n)) = 0$ and
% (2) $1-G_N(x) \sim \E[N](1-x)$ as $x \to 1^-$.
% \end{proof}
% 

In the following, by $f \gtrsim g$ as $x \to x_0$ we mean that $\liminf_{x \to x_0} f(x)/g(x) \ge 1$.

\begin{proof}[Proof of Theorem~\ref{th:big}]
 \begin{enumerate}
  \item
We start by noting that, given any $\N$-valued random variable $N$ such that $0<\E[N]<+\infty$ and an
arbitrary sequence $\{t_n\}_{n \in \N}$ in $[0,1]$, then $\sum_{n \in \N} (1-G_N(t_n))<+\infty$
if and only if
$\sum_{n \in \N} (1-t_n)<+\infty$. This follows easily from (a) $\lim_{n \to \infty} (1-t_n) = 0$ $\Longleftrightarrow$ $\lim_{n \to \infty} (1-G_N(t_n)) = 0$ and
(b) $1-G_N(x) \sim \E[N](1-x)$ as $x \to 1^-$.
The result follows by taking $t_n:=\pr(R<n)$.
\item
Note that $W<+\infty$ if and only if $\E[\widetilde R]<+\infty$ where $\widetilde R$ is the law of the radius 
of the deterministic annealed counterpart of the reverse firework process.
% $\int_0^{+\infty} (1-G_N(\pr(R < t)) \diff t  \equiv \int_0^{+\infty} (1-G_N(\pr(R \le t)) \diff t <+\infty$;
Moreover
\[
 \begin{split}
  \int_0^{+\infty} (1-G_N(\pr(R < t)) \diff t &= \int_0^{+\infty} \sum_{n \in \N} (1-\pr(R < t)^n) \pr({N}=n) \diff t\\
&= \int_0^{+\infty} \sum_{n=1}^\infty (1-\pr(R < t)) \sum_{i=0}^{n-1} \pr(R < t)^i \pr({N}=n) \diff t\\
&=
\int_0^{+\infty} \pr(R \ge t) \sum_{i=0}^\infty \pr(R < t)^i \pr(N\ge i+1) \diff t\\
& =\int_0^{+\infty} f_{N,R}(t) \pr(R \ge t)  \diff t
 \end{split}
\]
where $f_{N,R}(t):=\sum_{i=0}^\infty \pr(R < t)^i \pr(N\ge i+1)$.
By the Monotone Convergence Theorem $f_{N,R}(t) \uparrow \E[N]$ as $t \uparrow +\infty$.

Since $\{\pr(N \ge n)\}_{n \in \N}$ is a nonincreasing sequence, then
from %a well-known Tauberian theorem (see e.g.~\cite[Theorem 9]{cf:B74} or 
\cite[Sec.~XIII.5, Theorem 5]{cf:FII},
if $\pr(N \ge n) \ge \varepsilon L(n)/n^\alpha$ we have that
\[
 f_{N,R}(t) \ge \varepsilon \sum_{i=0}^\infty \pr(R < t)^i L(i+1)/(i+1)^\alpha \sim \varepsilon
\frac{\Gamma(1-\alpha)}{(1-\pr(R < t))^{1-\alpha}} L(1/(1-\pr(R < t))
\]
as $t \to +\infty$. This, in turn, implies
\[
 f_{N,R}(t)  \pr(R \ge t)  \gtrsim \varepsilon \Gamma(1-\alpha) \pr(R \ge t)^\alpha L(1/\pr(R \ge t))
\]
(remember that $\pr(R<t) \to 1$ as $t \to \infty$).
Hence $\int_0^{+\infty} \pr(R \ge t)^\alpha L(1/\pr(R \ge t)) \diff t =+\infty$ implies $W=+\infty$.

\item Analogously
\[
 f_{N,R}(t)  \pr(R \ge t)  \lesssim M \cdot \Gamma(1-\alpha) \pr(R \ge t)^\alpha L(1/\pr(R \ge t)),
\]
Hence $\int_0^{+\infty} \pr(R \ge t)^\alpha L(1/\pr(R \ge t)) \diff t <+\infty$ implies $W<+\infty$.
% 
% \item It follows from $(2)$ and $(3)$.

\item
In this case we have $\pr(N \ge n+1) \le  M/(n+1)$ for all $n \in \N$, %where $\overline M=\max(2M,1)$, 
whence
\[
 f_{N,R}(t)  \pr(R  \ge t)  \lesssim 
M \cdot  \pr(R \ge t) \ln(1/\pr(R \ge t)).
\]
Thus, $\int_0^{+\infty} \pr(R \ge t) \ln(1/\pr(R \ge t)) \diff t <+\infty$ implies $W<+\infty$.

\item
Finally $\pr(N \ge n+1) \ge  \varepsilon /(n+1)$ for all $n \in \N$, 
%where $\overline \varepsilon=\min(\varepsilon,1)$, 
whence
\[
 f_{N,R}(t)  \pr(R \ge t)  \gtrsim \varepsilon \cdot  \pr(R \ge t) \ln(1/\pr(R \ge t)).
\]
Thus, $\int_0^{+\infty} \pr(R \ge t) \ln(1/\pr(R \ge t)) \diff t =+\infty$ implies $W=+\infty$.

 \end{enumerate}

\end{proof}

\begin{proof}[Proof of Theorem~\ref{th:reverseinhomogeneous}]
 If we define
\[
 \xi_{n}:= \sum_{k \ge 1} (1-\pr(R_{n+k}<k)^{N_{n+k}}), \qquad
\zeta:= \sum_{n \in \N} \prod_{k=1}^\infty \pr(R_{n+k}<k)^{N_{n+k}}
\]
then, by the Monotone Convergence Theorem and the  Bounded Convergence Theorem,
\[
 \E_\mu[\xi_{n}]=\sum_{k \ge 1} (1-G_{N_{n+k}}(\pr(R_{n+k}<k))),
\qquad
\E_\mu[\zeta]=\sum_{n \in \N} \prod_{k=1}^\infty G_{N_{n+k}}(\pr(R_{n+k}<k)).
\]
According to \cite[Theorem 2.4(i)]{cf:JMZ}, $\E_\mu[\xi_{n}]=+\infty$ if and only if $\pr(S)=1$ 
(almost sure survival of the deterministic annealed counterpart) which is
equivalent to $\pr(S|\mathcal{N}=\mathbf{n})=1$ for $\mu$-almost all configurations $\mathbf{n}$. Moreover,
$\E_\mu[\zeta]<+\infty$ implies $\zeta <+\infty$ for $\mu$-almost all configurations  $\mathbf{n}$ and then,
according to \cite[Theorem 2.4(ii)]{cf:JMZ}, $\pr(S|\mathcal{N}=\mathbf{n})>0$ for $\mu$-almost all configurations $\mathbf{n}$.
\end{proof}

% \begin{proof}[Proof of Lemma~\ref{lem:01law}]
%  Let us denote the GW-process by $\{Z_i\}_{i \in \N}$ defined on the probability space of the trees with the
% induced probability measure $\mu_1$. By definition $A \supseteq E$. If $\varphi$ is the generating
% function of the branching process, then $\mu_1(E)$ is the smallest fixed point of $\varphi$ in $[0,1]$. Moreover 
% $\varphi(x)>x$ for all $x \in [0,\mu_1(E))$ and $\varphi(x)<x$ for all $x \in (\mu_1(E),1)$. By conditioning
% on $Z_1$ and using the conditional independence,
% \[
%  \mu_1(A)=\E[\mu_1(\mathbf{T} \in A | Z_1)]\le \E[\mu_1(\mathbf{T}^{(1)} \in A, \ldots , \mathbf{T}^{(Z_1)} \in A | Z_1)]=
% \E[\mu_1(A)^{Z_1}]=\varphi(\mu_1(A)).
% \]
% Since $\mu_1(A) \ge \mu_1(E)$ and $\mu_1(A) \le \varphi(\mu_1(A))$ then either $\mu_1(A)=\mu_1(E)$ or $\mu_1(A)=1$ and 
% the conclusions follow.
% \end{proof}

Before proving the results of Section~\ref{sec:GWtree} we discuss again the role of the annealed counterpart
introduced in Section~\ref{subsec:annealed}. By using the equality $\pr(\widetilde R < t)=\ident_{(0,+\infty)}(t) 
G_N(\pr(R<t))$ we have that if the original process satisfies the assumptions in one of the results of
of Section~\ref{sec:GWtree} (Theorems~\ref{th:GW-Homogeneous-Firework2} and \ref{th:GW-Homogeneous-reverse-Firework2}
or Corollaries~\ref{cor:GW-Homogeneous-Firework} and \ref{cor:GW-Homogeneous-reverse-Firework2}) then the annealed counterpart
satisfies the same conditions by taking $N=1$ a.s.~and $\widetilde R$ instead of $R$
(observe that, in this case, $G_N(t)\equiv t$). This means that proving these results
in the special case $N=1$ a.s.~is equivalent to proving them for every annealed counterpart of a generic process.

\begin{proof}[Proof of Lemma~\ref{lem:BRWihenrited2}]
 $(1)$ and $(2)$ are consequences of equation~\eqref{eq:disintegration}
(in the second case one applies equation~\eqref{eq:disintegration} to $\pr(\cdot|\tau \textrm{ is infinite})$).

Let us prove $(3)$ and $(4)$.
 Given %$\{N_w\}_{w \in \mathcal{W}}$ and 
$\{R_{w,i}\}_{w \in \mathcal{W}, i \in \N^*}$, 
denote by $q(\Upsilon)=\pr(\textrm{extinction}|\tau^L=\Upsilon)$ the probability of extinction of the firework (resp.~reverse firework) process on a fixed labelled tree
$\Upsilon \in \mathbb{LT}$ (observe that, when conditioning on $\{\tau=\Upsilon\}$,
the radii are still random variables).
Remember that $\mu$ (defined in Section~\ref{subsec:genericlabelledtrees}) is the law of 
$\tau^L$ depending on
 $N_\emptyset$ (the law of the random label $l$ of the root). By hypothesis, the support
of the law of $N_\emptyset$ is a subset of $J_N$ (the support of the law of $N$).

Denote by $O$ the set $\{\Upsilon\colon q(\Upsilon)=1\}$ of labelled trees where the process dies out almost surely.
Clearly 
if $A:=\{\Upsilon \colon  q(\Upsilon)=1, l(\Upsilon) \ge 1\}$ % \cup E$ 
then  $O=A \cup \mathbb{LT}_0$ (where $l(\Upsilon)$ is the label of the root of $\Upsilon$
and $\mathbb{LT}_0$ are the trees with no stations at the root). 
Note that $O^\complement=\{q(\Upsilon)<1\}\subseteq\{\Upsilon \textrm{ is infinite}, l(\Upsilon)  \ge 1\}$.

On a finite labelled tree $\Upsilon$ the firework and the reverse firework processes become extinct almost surely,
hence $E \subseteq O$. % and $\pr(E \setminus A|N_\emptyset =j)=0$ for all $j \in \N^*$.
 %$q(\mathbf{T})=1$. 
Using the notation %$\mu_j(\cdot)=\pr(\cdot|N_\emptyset =j)$
$\mu_j(\cdot)=\mu(\cdot|l =j)$ (see Section~\ref{subsec:genericlabelledtrees}),
we have $\mu_j(E \setminus A)=\mu_j(A \triangle O)=0$ for all $j \in \N^*$.
If a process
becomes extinct on a labelled tree $\Upsilon \in A$ (since there is a positive probability that it reaches each child of the root) then
it becomes extinct on every labelled subtree branching from a child of the root.
% with strictly positive label (that is, with at least one station).
%
%  $(A,O)$ would be inherited even if the law of $N_w$ depends on $w$ since each subtree is 
% identified with an admissible labelled tree with root in $\{(\emptyset,j)\colon j \in J\}$ (maybe belonging
% to a $\mu$-null set of trees. Unfortunately Lemma~\ref{lem:BRWihenrited} does not work any longer.
% When we identify a subtree  branching from a child of the root with a tree with root in $\{(\emptyset,j)\colon j \in J\}$ 
% and we say that the behavior of the process is the same, we are using the fact that the sequence of radii is i.i.d.,
% If the law of $R_{w,i}$ depends on $w$, the we could embed $\widetilde R_w$ in the label (by constructing a larger
% set of trees where the label is $(n,\{r_i\})$, that is, the number of stations and the radii. Then
% $(A,O)$ would be still inherited but we should prove a new version of Lemma~\ref{lem:BRWihenrited}.
%
%
%
Since each subtree can be identified with a labelled tree and the sequence of radii is i.i.d.,
we have that $(A,O)$ is inherited with respect to $\overline J=\N^*$. 
To be precise, for the above identification, 
%Note that we used the fact that $\{R_{w,i}\}_{w \in \mathcal{W}, i \in \N^*}$ is i.i.d.; nevertheless, 
in the case 
of the reverse firework, we just need that $\{R_{w,i}\}_{w \in \mathcal{W}\setminus\{\emptyset\}, i \in \N^*}$ is
an i.i.d.~family.
If the annealed probability of survival is positive
then by equation~\eqref{eq:disintegration} we have that 
$\mu(O)\equiv\pr(q(\tau^L)=1)<1$ which implies  $\pr(q(\tau^L)=1|N_\emptyset \ge 1)<1$,
where $q(\tau^L)=\pr(\textrm{extinction}|\tau^L)$.
Note that $\{q(\tau^L)=1\}=O^\complement \times \mathcal{O}$.

(3) Suppose that $\pr(N =0)=0$. 
Thus, $J_N \subseteq \overline J$ and, applying Lemma~\ref{lem:BRWihenrited}, 
 we have that either
$\mu_j(O \triangle E)=0$ for all $j\in \N^*$ or $\mu_j(O) =1$ for all $j\in J_N$. Whence 
$\mu_j(O) \in \{\alpha, 1\}$ for all $j\in J_N$. In particular, 
recalling that $\mu(l=j)=\pr(N_\emptyset=j)$, if $\overline{\varsigma}(i):=\mu(l=i|l \ge 1)\equiv\pr(N_\emptyset =i| N_\emptyset \ge 1)$ then 
we have $\mu_{\overline{\varsigma}}(\cdot)=\sum_{j \in \N} \overline{\varsigma}(j) \mu_j(\cdot)=
\sum_{j \in J_N} \overline{\varsigma}(j) \mu_j(\cdot)=\mu(\cdot|l \ge 1)$.
From Lemma~\ref{lem:BRWihenrited} either
$\mu_{\overline{\varsigma}}(O \triangle E)=0$  or $\mu_{\overline{\varsigma}} (O)=1$. This
implies $\mu_{\overline{\varsigma}}(O)=\mu(O|l \ge 1) \in \{\alpha, 1\}$ 
(since $\alpha=\mu_j(E)=\mu(E)=\mu_{\overline{\varsigma}}(E)$ for all $j \in \N$).
Observe that $\mu(O)<1$ if and only if $\mu_{\overline{\varsigma}}(O)=\alpha$; in this case
$\mu(O \cap \{l \ge 1\})=\alpha \mu(l \ge 1)= \mu(E \cap \{l \ge 1\})$, since the label
of the root and the finiteness of the tree are independent by construction.
Hence, when  $\mu(O)<1$ we have 
$O \cap \{l \ge 1\}=E \cap \{l \ge 1\}$ except for a $\mu$-null set
(since $O \supseteq E$, that is, there is no survival on a finite tree).
 Thus
either
$\mu(O)=1$ or 
$O=\mathbb{LT}_0 \cup E$ except for a $\mu$-null set (in this case 
$\mu(O)=\pr( N_\emptyset =0)+\pr(N_\emptyset \ge 1)\alpha$).
Equivalently, either $\mu(O^\complement)=0$ or 
$O^\complement=\{\Upsilon \textrm{ is infinite}, l(\Upsilon)  \ge 1\}$ except for a $\mu$-null set.
% 
% Consider the events $\{q(\tau^L)<1\}\equiv\{\tau^L \in O^\complement\}$
% and $\{q(\tau^L)=1\}\equiv\{\tau^L \in O\}$ respectively and remember that, by construction, $\tau^L$ and 
% $\{R_{w,i}\}_{w \in \mathcal{W}, i \in \N^*}$ are independent.
% By the discussion above, if $\mu(O)\equiv\pr(q(\tau^L)=1)<1$ or, equivalently,  $\pr(q(\tau^L)=1|N_\emptyset \ge 0)\equiv\mu(O|l \ge 1) <1$,  then 
% % Hence  
% % either $\pr(q(\tau^L)=1)=1$ or $\pr(q(\tau^L)=1|N_\emptyset \ge 1)=\pr(\tau^L \textrm{ is finite})=\mu(E)$.
% % In particular the last equality can be written as 
% % $\pr(q(\tau^L)=1,N_\emptyset \ge 1)= %\pr(\tau^L \textrm{ is finite})\pr(N_\emptyset \ge 1)=
% % \pr(\tau^L \textrm{ is finite},N_\emptyset \ge 1)$.
% % % since the label of the root and the finiteness of the tree are independent by construction.
% % This implies that 
% $\{q(\tau^L)=1,N_\emptyset \ge 1\}=\{\tau^L \textrm{ is finite},N_\emptyset \ge 1\}$  except for a $\pr$-null set.
% % Hence  
% % either $\pr(q(\tau^L)=1)=1$ or $\pr(q(\tau^L)=1|l(\tau^L) \ge 1)=\pr(\tau^L \textrm{ is finite})=\mu(E)$.
% % In particular the last equality can be written as 
% % $\pr(q(\tau^L)=1,l(\tau^L) \ge 1)=\pr(\tau^L \textrm{ is finite})\pr(l(\tau^L)>0)=\pr(\tau^L \textrm{ is finite},l(\tau^L) \ge 1)$
% % since the label of the root and the finiteness of the tree are independent by construction.
% % This implies that $\{q(\tau^L)=1,l(\tau^L) \ge 1\}=\{\tau^L \textrm{ is finite},l(\tau^L) \ge 1\}$  except for a $\pr$-null set.
% % 
% This implies immediately that 
Equivalently,
$\{q(\tau^L)<1\}=\{\tau^L \textrm{ is infinite}, N_\emptyset  \ge 1\}$ except for a $\pr$-null set. 
This means that there is a
positive probability of survival for the process for $\mu$-almost every realization of the environment (that is, the labelled GW-tree) such that
the underlying tree is infinite and there is at least one station at the root. 
Easy computations shows that $\pr(\tau^L \textrm{ is infinite}, N_\emptyset  \ge 1)=\pr(N_\emptyset \ge 1)(1-\alpha)$.
This completes the proof of $(3)$.

(4) If $\pr(N=0)>0$ then consider the annealed process with one station per site and
% annealed 
radii $\{\widetilde R_{w}\}_{w \in \mathcal{W}}$ (see Section~\ref{subsec:annealed}).  
The environment of this process can be identified with the unlabelled tree 
$\tau$. Define $q_2(\mathbf{T})=\pr(\textrm{extinction}|\tau=\mathbf{T})$ the probability of extinction 
of the annealed process on $\mathbf{T}\in \mathbb{T}$.
% (when conditioning to the unlabelled tree, 
% the number of stations and the radii are
% still random variables).
By reasoning as in Section~\ref{subsec:annealed}, for every fixed realization $\mathbf{T}$ of $\tau$, 
$q_2(\mathbf{T})$ is the same for the annealed or the original process. 
Since the annealed process has one station per site, we apply (3) obtaining that 
$\pr(\textrm{survival})>0$ implies $\{q_2(\tau)<1\}=\{\tau \textrm{ is infinite}\}$
except for a $\pr$-null set. 
% Since there is no survival if $N_\emptyset=0$ then 
% $\{q_2(\tau)<1\}=\{\tau \textrm{ is infinite}, N_\emptyset\ge 1\}$.
% Since $\tau$, $\N_\emptyset$ and $q_1(\tau,N_\emptyset)$ are the same for both processes, the last equality 
% holds for our original process as well.

Denote now by $q_1(\mathbf{T},n)=\pr(\textrm{extinction}|\tau=\mathbf{T}, N_\emptyset=n)$ 
the probability of extinction of the firework (resp.~reverse firework) process on a fixed unlabelled tree
$\mathbf{T} \in \mathbb{T}$ with $n \in \N$ stations at the root
(observe that, when conditioning on $\{\tau=\mathbf{T}, N_\emptyset=n\}$
the radii of all stations and the numbers of stations outside the root are still random variables).
Clearly, in the firework process, the probability of survival starting from $n$ stations at the root is less than or equal to  $n$ times the probability of survival starting from $1$ station (since
it is necessary and sufficient that at least one of the $n$ stations triggers a surviving process).
Hence $1-q_1(\tau, 1) \le 1- q_1(\tau, n) \le n (1-q_1(\tau,1))$ for all $n \ge 1$.
For the reverse firework process the first inequality turns into an equality, since the behavior
of the process does not depend on the number of stations at the root as long as they are positive.
Since  $1-q_2(\tau)=\sum_{n \in \N^*}(1- q_1(\tau,n)) \pr_{N_\emptyset}(n)$ then, using the previous inequalities, 
$q_2(\tau)<1 \Longleftrightarrow q_1(\tau,n)<1 \textrm{ for some } n \in \mathrm{supp}(N_\emptyset) \Longleftrightarrow
q_1(\tau,n)<1, \  \forall n \in \mathrm{supp}(N_\emptyset)$. This implies that $\{q_1(\tau,N_\emptyset)<1, N_\emptyset=n\}=
\{\tau \textrm{ is infinite}, N_\emptyset=n\}$ for all $n \ge 1$ except for a $\pr$-null set.
This means that $\pr(\mathrm{survival}|\tau=\mathbf{T}, N_\emptyset =n)>0$ for $\overline \mu$-almost every infinite (unlabelled) tree 
$\mathbf{T}\in \mathbb{T}$
and for $\pr_{N_\emptyset}$-almost all $n \ge 1$. 
Observe that $q_1(\tau, N_\emptyset)=\pr(\textrm{extinction}|\tau, N_\emptyset)$; again $\pr(\tau \textrm{ is infinite}, N_\emptyset  \ge 1)=\pr(N_\emptyset \ge 1)(1-\alpha)$.
% 
% This is the situation studied in Section~\ref{subsec:GWtree}. 
% If we prove that the probability of survival is $0$ (resp.~$1$)
% (for instance if
% the probability
% of the annealed counterpart is $0$ (resp.~$1$) on almost every
% GW-tree) then the probability of survival of the process is $0$ (resp.~$1$)
% on almost every realization of the labelled tree (that is, almost
% every GW-tree and almost every realization of the sequence of stations). On the 
% other hand, if we prove that the probability of survival is positive
% (for instance if the probability of the annealed counterpart is strictly positive on almost every
% GW-tree) then we have that $\pr(q(\tau^L)=1)<1$ (where $\tau^L$ is the random labelled GW-tree) and Lemma~\ref{lem:BRWihenrited} yields
% immediately $\{q(\tau^L)<1\}=\{\tau^L \textrm{ is infinite}\}$ except for a $\pr$-null set. This means that there is
% positive survival for the process for almost every realization of the environment (that is, the labelled GW-tree) such that
% the underlying tree is infinite.
\end{proof}

\begin{proof}[Proof of Theorem~\ref{th:GW-Homogeneous-Firework2}]
The proof is divided into two main parts. We 
start in part (a) by proving the results in the case $N=1$ a.s.: the general case will be 
considered in (b).

(a). 
Here the environment $\tau^L$ can be identified with a realization of the GW-tree $\tau$
(with no labels) and the generating function of $N$ is $G_N(t) \equiv t$. 
As in Section~\ref{sec:intro} we denote by $\overline{\mu}$ the law of $\tau$ on $\mathbb{T}$.

(1-2). 
Consider at each step and for each station at a vertex $w$ the border of the signal $\partial_w$,
that is, the vertices at distance
$n$ from the source such that $n \le R_w< n+1$. Define a new process where at each step we activate only the stations
in $\partial_w$ where $w$ is the site of a station activated in the previous step. This process is stochastically dominated
by the original Firework process. 
The generating function of the law of $\# \partial_w$ can be computed as 
$\Psi(z):=\sum_{n \in \N} \pr(n \le R <n+1) \varphi^{(n)}_{\rho}(z)$ where $\varphi^{(n)}_{\rho}$
is
the generating function of the convolution $\underbrace{\rho * \ldots * \rho}_{n \textrm{ times}}$ 
(and $\varphi^{(0)}_{\rho}(z) \equiv 1$), that is, of the law of the number of descendants in the $n$-th generation.
%
% Since $\Psi(z)=\sum_k \pr(\partial_w=k)z^k=\sum_k \sum_n \pr(#descendants at distrance n is k) \pr(n\le R < n+1) z^k
% =\sum_n \sum_k \pr(#descendants at distrance n is k) \pr(n\le R < n+1) z^k=..
%
The number of activated stations at each step is a GW-process with generating function $\Psi$ hence it survives
%
% every activated vertex defines its own border independently, with the same law and each on its  
% own branching subtree
%
if and only if $\Psi^\prime(1)>1$ which is $\sum_{n=1}^\infty \pr(n \le R < n+1) m^n >1$. This inequality
is clearly equivalent to $\Phi(m)-1>\Phi(0)$.
Since
the (annealed) probability of survival is positive then $\pr(q(\tau)=1)<1$ hence, 
according to Lemma~\ref{lem:BRWihenrited2}(3),
% $\pr(q(\tau)=1)=
% \pr(\tau \textrm{ is finite})$ (%Lemma~\ref{lem:BRWihenrited} 
$\{q(\tau)=1\}=
\{\tau \textrm{ is finite}\}$ except for a $\pr$-null set. %Lemma~\ref{lem:BRWihenrited} 

(3). 
By using a coupling argument it is enough to prove the result for the process on the deterministic
tree $\mathbb{T}_k$ where each vertex has $k$ children. 
Consider the following random tree constructed on the vertices of $\mathbb{T}_k$.
We connect by a blue edge the root $\emptyset$ with each vertex $w \in \mathbb{T}_k$ such that the length $|w| \le R_\emptyset$.
This is the first generation. Suppose we have defined the $n$-th generation $\mathcal{H}_n$.
Take the vertices of $\mathcal{H}_n$, one at the time, in lexicographic order and connect  $w \in \mathcal{H}_n$ 
to every vertex $ww^\prime$ in the subtree of $\mathbb{T}_k$
branching from $w$ which has not been previously connected and such that the length $|w^\prime| \le R_w$.
The connected vertices $ww^\prime$ are the $n+1$-th generation.
The vertices of this random tree are exactly the vertices of $\mathbb{T}_k$ which are activated in the
firework process. More precisely, this is a spanning tree of the (random) F-graph described in 
Section~\ref{sec:intro}.
Hence, by coupling,
the firework process survives on $\mathbb{T}_k$ if and only if the blue tree is infinite.

But the blue random tree is a subtree of a new GW-tree whose (random) number of offsprings has the same law as
the number of descendants of any vertex of $\mathbb{T}_k$, say the root for instance, up to the random generation $R$ (included). 
%
% Just complete the tree be adding to each vertex $n$ edges connected to new vertices
% where $n$ is the number of edges previously removed and attaching to these new vertices a GW-tree with the same
% law (the number of descendants of any vertex of $\tau$)
%
% BETTER IDEA: at each step, to each vertex $w$ of generation $n$ attach a new copy of its progeny (up to the border of the generation $n$
% that we are considering) and complete it with a G-W tree, now take the radius R_w of the vertex itself and cut the vertices 
% of the attached tree at a distance strictly larger than R. The remaining set is a piece of generation $n+1$ that we attach to
% $w$ by blue edge. 
% 
%
It is easy to check that the
expected number of descendants up to the (random) $R$-th generation is  
$\sum_{n=1}^\infty \pr(n \le R < n+1) (k^{n+1}-k)/(k-1)$; thus if
this expected value is less or equal than $1$ the blue tree is finite almost surely and
the probability of survival of the firework process is $0$. 
The extinction for almost all the realizations of the GW-tree follows easily from equation~\eqref{eq:disintegration}.
The proof is complete in the case where $N=1$ almost surely.

(b). We consider now the general case. 
Let us denote by $\tau^L$ the labelled GW-tree and by $\tau$ the projection of $\tau^L$ on $\mathcal{W}$, that is,
the underlying (unlabelled) GW-tree. Let  $\mu$ be the law of $\tau^L$ on $\mathbb{LT}$. Given $\{R_{w,i}\}_{w \in \mathcal{W}, i \ge 1}$ and $\{N_w\}_{w \in \mathcal{W}}$,
we consider, along with the firework process $\eta$ on the labelled GW-tree, the annealed counterpart, that is,
the firework process $\widetilde \eta$ with one
station per vertex and radii
$\widetilde R_w :=\max(R_{w,1}, \ldots, R_{w,N_w})$ for all $w \in \mathcal{W}$. 
If $\eta$  satisfies the conditions of the theorem then $\widetilde \eta$
is a process with one station per site satisfying again the condition of the theorem (in the case studied in the previous part of the proof).

(1). Since $\pr(\widetilde R_w <t)=G_N(\pr(R<t))$ 
according to (1-2) we have that
%the previous part of this proof, 
if $\Phi(m)-1 >\Phi(0)$ (or equivalently
$\sum_{n=1}^\infty (G_N(\pr(R<n+1))-G_N(\pr(R<n))) m^n >1$) then the process
$\widetilde \eta$ survives with positive probability on almost every infinite GW-tree, that is,
% $\big \{\mathbf{T}\colon \pr(\widetilde S|\tau=\mathbf{T})>0 \big \}=
% \big \{\mathbf{T}\colon \mathbf{T} \textrm{ is infinite}\big \}$ except for a $\overline \mu$-null set
$\overline{\mu} \big (\mathbf{T}\colon \pr(\widetilde S|\tau=\mathbf{T})>0 \big )=\overline{\mu}
\big (\mathbf{T}\colon \mathbf{T} \textrm{ is infinite}\big )$
(where $\widetilde S$ is the event ``$\widetilde \eta$ survives'' and $\overline{\mu}$ is the GW-probability measure
on the space $\mathbb{T}$ of unlabelled trees). 
This implies that $\pr(\widetilde S)>0$. Since $\pr(S)=\pr(\widetilde S)$ (where $S$ is the event ``$\eta$ survives'') then
we have $\mu \big (\Upsilon\colon \pr(S|\tau^L=\Upsilon)>0 \big )>0$. 
Since %$\mu \big (\Upsilon\colon \pr(S|\tau^L=\Upsilon)=0) \big )<1$ 
%is inherited and 
$\mu \big (\Upsilon\colon \pr(S|\tau^L=\Upsilon)>0 \big )>0$ then, according to %by Lemma~\ref{lem:BRWihenrited} and 
Lemma~\ref{lem:BRWihenrited2}
(remember
that the support of the law $N_\emptyset$ is equal to $J_N$),
since $\pr(N =0)=0$, we have
$\{\pr(S|\tau^L)>0\}=\{\tau^L \not \in E, l(\tau^L)>0\}$ except for a $\pr$-null measure set
%$\mu \big (\Upsilon\colon \pr(S|\tau^L=\Upsilon)>0 \big )=\mu(E)$ 
(where $E$ is the set of all finite labelled trees).

(2). As in (1) of part (b) we have that $\pr(S)=\pr(\widetilde S)>0$ which implies, according to %by Lemma~\ref{lem:BRWihenrited} and 
Lemma~\ref{lem:BRWihenrited2},  
$\{\pr(S|\tau,N_\emptyset)>0\}=\{\tau \textrm{ is infinite}, N_\emptyset  \ge 1\}$
except for a $\pr$-null set.

(3). If 
$\Phi(k)-1 \le 1-1/k$ 
%(or equivalently, $\sum_{n=1}^\infty (G_N(\pr(R<n+1))-G_N(\pr(R<n))) (k^{n+1}-k)/(k-1) \le 1$)
then $\overline \mu \big (\mathbf{T}\colon \pr(\widetilde S|\tau=\mathbf{T})=0 \big )=1$
which implies $0=\pr(\widetilde S)=\pr(S)$. This, in turn, is equivalent to 
$\mu \big (\Upsilon\colon \pr(S|\tau^L=\Upsilon)=0 \big )=1$.
\end{proof}

\begin{proof}[Proof of Corollary~\ref{cor:GW-Homogeneous-Firework}]
% In this case the generating function is $\Phi(t):=\E[t^{\lfloor R\rfloor}]=
% \sum_{n=0}^\infty \pr(n \le R < n+1) t^n$ and $G_N(\pr(R<1))=\pr(R<1)$.
% By definition %, if the critical values exist, they satisfy 
% $\underline m_c \le \overline m_c$.
 Since $\pr(R<1) <1$ then $\Phi(m) > \Phi(0)+1$ eventually as $m \to \infty$; thus, according to 
Theorem~\ref{th:GW-Homogeneous-Firework2}, $\overline m_c<+\infty$.
Since the GW-tree is a.s.~finite when $m \le 1$, we have that $\overline m_c \ge 1$.
Moreover, if $m > 1/ \limsup_{n \to \infty} \sqrt[n]{1-G_N(\pr(R<n))}$ 
(the latter being the radius of convergence of $\Phi$ according to the discussion before Corollary~\ref{cor:GW-Homogeneous-Firework}) 
then, by Theorem~\ref{th:GW-Homogeneous-Firework2},  there is survival with positive probability for the firework process.
% for almost every realization of the environment such that the underlying tree is infinite. 
This implies that if $\limsup_{n \to \infty} \sqrt[n]{1-G_N(\pr(R<n))}=1$ we have positive survival if and only if $m >1$. 
Thus $%\underline m_c=
\overline m_c=1$.
% NOTE: there is no stochastic monotonicity wtih repsect to $m$.

% Finally suppose that $\limsup_{n \to \infty} \sqrt[n]{\pr(n \le R < n+1)}<1$ 
% and $\Phi^\prime(1) \equiv \E[\lfloor R \rfloor] < 1$. This implies that, for every $m>1$ sufficiently close to 1, 
% $m %\cdot 
% (\Phi(m)-1)/(m-1) < 1$. According to Theorem~\ref{th:GW-Homogeneous-Firework2}, for these values of $m$ there is a.s.~extinction which implies $\underline m_c>1$.
\end{proof}

% 
% \begin{proof}[Details on Remark~\ref{rem:GWmultiplestationscorollaries}]
% The first substitution follows immediately from the explicit expression of $\Phi$ in Theorem~\ref{th:GW-Homogeneous-Firework2}.
% As for the second one, we note that $\Phi^\prime(t)=  \E \Big [\sum_{i \ge 1} i (\pr(R<i+1)^N-\pr(R<i)^N) \Big ]$.
% Rewriting the terms of the sum,
% \[
% \begin{split}
%  \sum_{i \ge 1} i (\pr(R<i+1)^N-\pr(R<i)^N)&= \sum_{n,i =1}^\infty  (\pr(R<i+1)^N-\pr(R<i)^N) \ident_{\{i \ge n\}} \\
% &= 
% \sum_{n =1}^\infty  (1-\pr(R<n)^N). 
% \end{split}
% \]
% \end{proof}
% 

%\textcolor{red}{
\begin{proof}[Proof of Theorem~\ref{th:GW-Homogeneous-reverse-Firework2}]
%The strategy is similar to the one followed in the proof of Theorem~\ref{th:GW-Homogeneous-Firework2}. 
As in the proof of Theorem~\ref{th:GW-Homogeneous-Firework2} we start with the case 	where $N=1$ almost surely.
In the first part of the proof, the environment $\tau^L$ is identified with the GW-tree $\tau$
(since we have one station per vertex).

After the construction of the GW-tree $\tau$, 
we construct a new tree  (which is not in general a subtree of $\tau$)
iteratively as follows:
starting from the origin, (1) for every word $i$ of length 1 of the tree
draw a purple edge from $\emptyset$ to $i$ if and only if $R_{i} \ge1$,
(2) for all $n \ge 1$ suppose we finished connecting words of length $n$, take all
words $w^\prime$ of length $n+1$ such that there are no ancestors already connected to the root
and connect them to the root if and only if $R_{w^\prime} \ge n+1$. This is the construction of the first purple generation. 
Now we construct the second generation applying (1) and (2) to the subtrees branching from
each vertex of the 1st purple generation (in the construction, these vertices become the roots of 
the branching subtrees). The constructions on these subtrees are independent since 
the subtrees are pairwise disjoint.
The construction of the subsequent generations follows by iteration.
Using this construction, if a vertex is able to listen to more than one station which can broadcast the signal 
then we are connecting it
to the closest one. This is a spanning tree of the (random) RF-graph described in Section~\ref{sec:intro}. 

This new purple tree is a GW-tree and there is survival
%
% since what I connect to a vertex $w$ depends only on its own branching subtree and they have
% all the same laws
%
%
if and only if this tree is infinite. The expected number
of purple edges from the origin is %. \break \noindent
%CLAIM: there is survival wpp if and only if 
$\phi_2(m)=\sum_{i=1}^\infty m^i \pr(R \ge i) \prod_{j=1}^{i-1} \pr(R<j)% >1
$ where $m$ is the expected number of children
in the original GW-tree. 
Hence, the purple GW-tree is finite if and only if the probability that a vertex has at least one
child is strictly smaller than 1 and the expected number of children of a vertex is smaller or equal than 1.

Given the original GW-process $\{Z_n\}_{n \in \N}$, % tree), 
the probability (conditioned on  $\{Z_n\}_{n \in \N}$)
that the root has no children in the purple process is $\prod_{n=1}^\infty \pr(R<n)^{Z_n}$. Clearly
$\prod_{n=1}^\infty \pr(R<n)^{Z_n}=0$ if and only if $\sum_{n=1}^\infty {Z_n}\pr(R\ge n)=+\infty$. 
For almost every realization of  $\{Z_n\}_{n \in \N}$ such that the GW-tree is infinite, we have $Z_n \sim m^n$ (where $m>1$),
thus  $\sum_{n=1}^\infty {Z_n}\pr(R\ge n)=+\infty$  is equivalent to
$\phi_1(m)=\sum_{n=1}^\infty {m^n}\pr(R\ge n)=+\infty$.
%
% Note that when  $\phi_1(m):=\sum_{n=1}^\infty {m^n}\pr(R\ge n)=+\infty$ (that is there is at least one purple edge from the root a.s.)
% then there is a positive probability that there are at least 2 purple edges from the root (Borel-Cantelli), hence 
% in this case the expected number of children of the root in the purple process is strictly larger than 1
%

Hence there is annealed a.s.~extinction if and only if $\phi_1(m)<+\infty$ and $\phi_2(m) \le 1$;
equation~\eqref{eq:disintegration} implies a.s.~extinction on almost every realization of the GW-tree. Moreover
the annealed probability of survival conditioned of the event `` the GW-tree is infinite'' 
is $1$ if and only if $\phi_1(m)=+\infty$; again 
equation~\eqref{eq:disintegration} implies survival with probability 1 on almost every realization of the GW-tree.
% $\sum_{n=1}^\infty {m^n}\pr(R\ge n)<+\infty$ and 
% $\sum_{i=1}^\infty m^i \pr(R \ge i) \prod_{j=1}^{i-1} \pr(R<j) \le 1$.
% 
%In this case, in the purple process, the root has at least one child with probability one, hence there is survival
%with probability one.
When $\phi_1(m)<+\infty$ and $\phi_2(m) >1$ we apply %Lemma~\ref{lem:BRWihenrited} 
Lemma~\ref{lem:BRWihenrited2} to obtain the quenched results.
As in the proof of Theorem~\ref{th:GW-Homogeneous-Firework2}, 
the results in the general case come from the first part of the proof and from Lemma~\ref{lem:BRWihenrited2} 
%Lemma~\ref{lem:BRWihenrited} 
by using
the reverse firework process $\eta$ (associated with $\{R_{w,i}\}_{w \in \mathcal{W}, i \ge 1}$ and $\{N_w\}_{w \in \mathcal{W}}$) 
and its annealed counterpart $\widetilde \eta$ 
(with one station per vertex and radii
$\widetilde R_w :=\max(R_{w,1}, \ldots, R_{w,N_w})$ for all $w \in \mathcal{W}$).
\end{proof}

\begin{proof}[Proof of Corollary~\ref{cor:GW-Homogeneous-reverse-Firework2}]
Recall $\phi_1$ and $\phi_2$ defined in   Theorem~\ref{th:GW-Homogeneous-reverse-Firework2}
From the probabilistic interpretation of $\phi_1(m)$ and $\phi_2(m)$ given in the proof
of Theorem~\ref{th:GW-Homogeneous-reverse-Firework2} %holds in the general case as well, hence
we have that $\phi_2(m) <1$ implies $\phi_1(m)<+\infty$. Moreover, $\phi_1 \ge \phi_2$.
Since we assumed that $\pr(R<1) \in (0, 1)$ we have $G_N(\pr(R<1)) \in (0, 1)$ as well. In particular, 
$G_N(\pr(R<1))<1$, implies that 
$\phi_2$ is strictly increasing and $\lim_{m \to \infty} \phi_2(m)=+\infty$. 
%On the other hand, $\phi_2(m) \ge \delta m$ ($\delta >0$)
Define
\[
 M_c:= \sup\{m \ge 0 \colon \phi_1(m) <+\infty\}, \quad  m_c:= \sup \{m \ge 0 \colon \phi_2(m) \le 1 \} \equiv 
\sup \{m \ge 0 \colon \phi_2(m) < 1 \}.
\]
By the discussion above, $m_c \le M_c$ and $m_c < +\infty$. %Moreover $M_c = 1/ \limsup_n \sqrt[n]{\pr(R \ge n)} \le 1$. 
Observe that, in general,
$\sum_{n=1}^\infty (1-\alpha_n)\prod_{j=1}^{n-1} \alpha_j = 1 -\lim_{n \to \infty} \prod_{j=1}^{n} \alpha_j$ when the limit exists.
% In this case
% \[ %\begin{equation}\label{eq:limit}
% \begin{split}
% % \sum_{i=1}^\infty  m^i \pr(R \ge i) & \prod_{j=1}^{i-1} \pr(R<j) \\
% % % &= m\pr(R \ge 1)+ \lim_{n\to \infty} \Big (\sum_{i=1}^n (m^{i+1}
% % % -m^i)\prod_{j=1}^i \pr(R<j)
% % % - m^{n+1} \prod_{j=1}^{n+1} \pr(R<j) \Big ) \\
% % % & = 1-m\pr(R<1)+ \lim_{n\to \infty} \Big ((m-1)\sum_{i=0}^n m^{i}\prod_{j=1}^i \pr(R<j)
% % % - m^{n+1} \prod_{j=1}^{n+1} \pr(R<j) \Big ).
% \phi_2(m)&= m+ \lim_{n\to \infty} \Big (\sum_{i=1}^{n-1} (m^{i+1}
% -m^i)\prod_{j=1}^i G_N(\pr(R<j))
% - m^{n} \prod_{j=1}^{n} G_N(\pr(R<j)) \Big ) \\
% & = 1+ \lim_{n\to \infty} \Big ((m-1)\sum_{i=0}^{n-1} m^{i}\prod_{j=1}^i G_N(\pr(R<j))
% - m^{n} \prod_{j=1}^{n} G_N(\pr(R<j)) \Big )=:1+g(m).
% \end{split}
% \] %\end{equation}
% Hence we have that 
% $\phi_2(m)
% %\sum_{i=1}^\infty m^i \pr(R \ge i) \prod_{j=1}^{i-1} \pr(R<j) 
% >1$ (resp.~$<1$) is equivalent to 
% $
% % \lim_{n\to \infty} \Big ((m-1)\sum_{i=0}^{n-1} \prod_{j=1}^i (mG_N(\pr(R<j)))
% % - \prod_{j=1}^{n} (mG_N(\pr(R<j))) \Big ) 
% g(m)> 0$ (resp.~$<0$).
% In particular we have that
Hence $\phi_2(1)=
%\sum_{n=1}^\infty (1-G_N(\pr(R < n))) \prod_{j=1}^{n-1} G_N(\pr(R<j))=
1-\prod_{j=1}^\infty G_N(\pr(R<j))$ which implies 
$m_c \ge 1$
(note that $\phi_2(1)<1$ if and only if $\phi_1(1)<+\infty$).
% Since $\phi_1 \ge \phi_2$, we have that $\phi_2$ is finite in $[0,M_c]$ and by continuity $\lim_{m \to 0} \phi_2(m)=0$; this
% implies that $\overline m_c >0$. 
 
%By the Monotone Convergence Theorem 
% \[
%  \sum_{n=1}^\infty \overline m_c^n \pr(R \ge n)\prod_{j=1}^{n-1} \pr(R<j)=\lim_{m \to \overline m_c^-} 
% \sum_{n=1}^\infty m^n \pr(R \ge n)\prod_{j=1}^{n-1} \pr(R<j) \le 1,
% \]
Since $\phi_2$ is a series with nonnegative coefficients, we have that $\phi_2(m_c) \le 1$; thus if $m_c < M_c$ then
$\phi_1(m_c)<+\infty$ and for $m=m_c$ 
there is almost sure extinction for almost every realization of the environment.
\end{proof}%}

\begin{proof}[Details on Remark~\ref{rem:phasetransition2}]
Recall the definition of $\phi_1$ and $\phi_2$ given in the proof of Corollary~\ref{cor:GW-Homogeneous-reverse-Firework2}.
If $M_c=+\infty$ there is nothing to prove since, according to Corollary~\ref{cor:GW-Homogeneous-reverse-Firework2},
$m_c<+\infty$. On the other hand,
 suppose that $1<M_c <+\infty$. %and $\phi_1(M_c)= +\infty$. 
This implies immediately
 that $\phi_1(1)< +\infty$ hence $\prod_{j=1}^\infty \pr(R<j) =\delta >0$.
Thus, $\phi_1(m) \ge \phi_2(m) \ge \delta 
\phi_1(m)$. 
Since $\phi_2$ is a series with nonnegative coefficients, we have that $\phi_2(m_c) \le 1$,
whence $\phi_1(m_c) \le 1/\delta<+\infty$. If, in addition, $\phi_1(M_c)= +\infty$ then
we have that $M_c > m_c$.
\end{proof}

% \textcolor{red}{Da qui in poi utilizza la notazione $\Upsilon$ generico albero con label e $\tau^L$ random BRW-tree. 
% Elimina la proiezione dell'albero se non serve.}


\begin{thebibliography}{15}

\bibitem{cf:B74}
E.A.~Bender, Asymptotic methods in enumeration, SIAM Rev.~\textbf{16} (1974), 485--515.

\bibitem{cf:BMZ}
D.~Bertacchi, F.P.~Machado, F.~Zucca,
Local and global survival for nonhomogeneous random walk systems on $\mathbb{Z}$,
Adv.~Appl.~Probab.~\textbf{46} n.~1 (2014), arXiv:1201.3324.

\bibitem{cf:BZ}
D.~Bertacchi, F.~Zucca,
Critical behaviors and critical values of branching random walks
on multigraphs, J.~Appl.~Probab.~\textbf{45} (2008), 481--497.


\bibitem{cf:BZ2}
D.~Bertacchi, F.~Zucca,
Characterization of the critical values of branching random walks on
weighted graphs through infinite-type branching processes,
J.~Stat.~Phys.~\textbf{134} n.~1 (2009), 53--65.

\bibitem{cf:BZ3}
D.~Bertacchi, F.~Zucca,
Approximating critical parameters of branching random walks,
J.~Appl.~Probab.~\textbf{46} (2009), 463--478.

% 
% \bibitem{cf:BZsurvey}
% D.~Bertacchi, F.~Zucca,
% Recent results on branching random walks,
% \textsl{Statistical Mechanics and Random Walks:
% Principles, Processes and Applications},
% Nova Science Publishers, 2012, 289--340.

\bibitem{cf:FII}
W.~Feller, An introduction to probability theory and its applications. Vol. II. 
Second edition John Wiley \& Sons, Inc., New York-London-Sydney (1971).


\bibitem{cf:JMZ}
%Valdivino Junior, Fabio Machado, Mauricio Zuluaga,
V.~Junior, F.P.~Machado, M.Z.~Martinez,
Rumor processes on $\N$.
J.~Appl.~Probab.~\textbf{48},(2011), 624--636.


\bibitem{cf:KS}
H.~Kesten, V.~Sidoravicius, {The spread of a rumor or infection in a moving population},
Ann.~Prob.~\textbf{33}, (2005)
 2402--2462.


\bibitem{cf:KOW}
J.~Kostka, Y.~A.~Oswald, R.~Wattenhofer, {Word of mouth: rumor dissemination in social networks},
in \textit{Structural information and communication complexity}, Lecture Notes Comput.~Sci.~\textbf{5058},
Springer Berlin (2008), 185--196.

\bibitem{cf:KLLM}
T.~G.~Kurts, E.~Lebensztayn, A.~Leichsenring, F.~P.~Machado, {Limit theorems for an epidemic model on
the complete graph},
Alea \textbf{4}, (2008),
45--55.

\bibitem{cf:LMM}
\'E.~Lebensztayn, F.P.~Machado, M.Z.~Martinez,
Self-avoiding random walks on homogeneous trees, 
Markov Process.~Related Fields~\textbf{12} no. 4, (2006),  735--745.


\bibitem{cf:LMZ10}
\'E.~Lebensztayn, F.P.~Machado, M.Z.~Martinez, 
Nonhomogeneous random walk system on $\Z$.
J.~Appl.~Probab.~\textbf{47} (2010), 562-571.


\bibitem{cf:LR}
\'E.~Lebensztayn, P.M.~Rodr\'\i guez,
The disk-percolation model on graphs, 
Statist.~Probab.~Lett.~\textbf{78} no. 14, (2008),  2130--2136.

\bibitem{cf:MT}
D.P.~Maki, M.~Thompson,  
\textit{Mathematical models and applications.}  
Prentice-Hall, Englewood Cliffs, NJ, 1973.


\bibitem{cf:PemStac1}
R.~Pemantle, A.M.~Stacey, {The branching random walk and
contact process on Galton--Watson and nonhomogeneous trees},
Ann.~Prob.~\textbf{29}, (2001),
 n.4, 1563--1590.


\bibitem{cf:Per}
Y.~Peres, Probability on trees: an introductory climb, in
\textit{Lectures on probability theory and statistics (Saint-Flour, 1997)}, 
Lecture Notes in Math.~\textbf{1717}, Springer-Berlin, 1999, 193--280, .


\bibitem{cf:Sud}
A.~Sudbury, 
The proportion of the population never hearing a rumour, 
J.~ Appl.~Probab.~\textbf{22}  no.~2, (1985), 443--446. 

\bibitem{cf:Woess09}
        W.~Woess,
\textit{Denumerable Markov chains,
Generating functions, boundary theory, random walks on trees.}
EMS Textbooks in Mathematics,
European Mathematical Society (EMS), 2009.

\bibitem{cf:Z1}
F.~Zucca,
Survival, extinction and approximation of discrete-time branching random walks,
J.~Stat.~Phys., \textbf{142} n.4 (2011), 726--753.


\end{thebibliography}
\end{document}